\let\oldtocsection=\tocsection
\let\oldtocsubsection=\tocsubsection
\renewcommand{\tocsection}[2]{\hspace{0em}\oldtocsection{#1}{#2}}
\renewcommand{\tocsubsection}[2]{\hspace{1em}\oldtocsubsection{#1}{#2}}
\definecolor{blue}{HTML}{1F77B4}
\definecolor{orange}{HTML}{FF7F0E}
\definecolor{green}{HTML}{2CA02C}
\newtheorem{tm}{Theorem}[section]
\newtheorem{prop}{Proposition}[section]
\newtheorem{lem}{Lemma}[section]
\newtheorem{rk}{Remark}[section]
\numberwithin{equation}{section}
\numberwithin{tm}{section}
\def\p{\partial}
\def\R{{\mathbb R}}
\def\be{\beta}
\def\De{\Delta}
\def\Om{\Omega}
\begin{document}

\pagestyle{fancy}
\lhead{}
\chead{}
\rhead{}
\lfoot{}
\cfoot{\thepage}
\rfoot{}
\renewcommand{\headrulewidth}{0pt}
\setlength{\footskip}{25pt}

\title[Spatial profiles of  a diffusive epidemic model]
{Spatial profiles of a reaction-diffusion epidemic model with nonlinear incidence mechanism and varying total population}
\author[R. Peng, R.B. Salako and Y. Wu]{Rui Peng, Rachidi B. Salako and Yixiang Wu}

\thanks{{\bf R. Peng}: School of Mathematical Sciences, Zhejiang Normal
University, Jinhua, 321004, Zhejiang, China. {\bf Email}: {\tt pengrui\_seu@163.com}}

\thanks{{\bf R. B. Salako}: Department of Mathematical Sciences, University of Nevada Las Vegas, Las Vegas, NV 89154, USA. {\bf Email}: {\tt rachidi.salako@unlv.edu}}

\thanks{{\bf Y. Wu}: Department of Mathematical Sciences, Middle Tennessee State University, Murfreesboro, TN 37132, USA. {\bf Email}: {\tt yixiang.wu@mtsu.edu}}

\date{\today}

\keywords{Reaction-diffusion SIS epidemic model; nonlinear infection mechanism; spatial profile; small population movement rate; heterogeneous environment.}

\subjclass[2010]{35J57, 35B40, 35Q92, 92D30}

\begin{abstract} This paper considers a susceptible-infected-susceptible (SIS) epidemic reaction-diffusion model with no-flux boundary conditions and varying total population. The interaction of the susceptible and infected people is describe by the nonlinear transmission mechanism of the form $S^qI^p$, where $0<p\le 1$ and $q>0$. In \cite{PSW}, we have studied  a model with a constant total population. In the current paper, we extend our analysis to a model with a varying total population, incorporating birth and death rates. We investigate the asymptotic profiles of the endemic equilibrium when the dispersal rates of susceptible and/or infected individuals are small. Our work is motivated by disease control strategies that limit population movement. To illustrate the main findings, we conduct numerical simulations and provide a discussion  of the theoretical results from the view of disease control. We will also compare the results for the models with constant or varying total population.

\end{abstract}

\maketitle


\section{Introduction}

Ordinary differential equation compartmental epidemic models have been widely adopted to study the transmission of infectious diseases since the pioneering work of Kermack and McKendrick \cite{KM1}. In these models, the population is typically divided into distinct groups, such as susceptible and infected individuals, and the transmission dynamics between these groups are described by a system of differential equations. To account for the impact of spatial heterogeneity and population movement on the spread of infectious diseases, recent efforts have delved into the exploration of reaction-diffusion epidemic models with non-constant coefficients (e.g., \cite{allen2008asymptotic,Bertaglia,CL,CLL,DP,KMP,LXZ,LPW,LPX,LT2023,LLou,LS,MWW2,Peng,Sa,SLX,WWK}).
These models offer a potential tool for understanding the underlying mechanisms and predicting the spread of infectious diseases in spatially heterogeneous environments.

In differential equation epidemic models, the interaction between susceptible and infected individuals is typically described by a transmission term, which is crucial for the accuracy of these models. A commonly used transmission mechanism is the mass action term $\beta SI$, adopted in the pioneering work of Kermack and McKendrick \cite{KM1}. This mechanism assumes that the number of new infections is directly proportional to the densities of susceptible ($S$) and infected ($I$) individuals, with $\beta$ representing the disease transmission rate. However, the mass action mechanism may not accurately represent disease transmission in many scenarios \cite{DV1,Het1,McCallum}. An alternative is the nonlinear incidence mechanism $\beta S^qI^p$, where $p, q > 0$, which offers greater flexibility and may better capture the dynamics of disease spread \cite{Het2,HLV,HV,LHL,LLI,LR}.


In \cite{PSW}, we have considered the following susceptible-infected-susceptible (SIS) reaction-diffusion epidemic model:
\begin{equation}\label{model-1}
    \begin{cases}
        \p_tS=d_S\Delta S-\beta(x) S^qI^p+\gamma(x) I,\ \ \ & x\in\Om,  t>0,\cr
        \p_t I= d_I\Delta I +\beta(x) S^qI^p-\gamma(x) I, & x\in \Om,\ t>0,\cr
        \p_{\nu}S=\p_{\nu}I=0, & x\in\p \Om, \ t>0,\cr
        S(x,0)=S_0(x),\ \ I(x,0)=I_0(x),&x\in\Omega,\cr
        \int_\Omega (S+I)=N, &t>0,
    \end{cases}
\end{equation}
where $S(x,t)$ and $I(x,t)$ are  the density of  susceptible and infected individuals at  spatial location $x$ and time $t$, respectively. The positive constants $d_S$ and $d_I$ are population movement rates; $\Omega\subset \R^n$ is a bounded domain with smooth boundary $\p\Om$; $\nu$ is the outward unit normal to $\p\Om$; and $\beta$ and $\gamma$ are disease transmission and recovery rates, respectively, which are  positive H\"older continuous functions on $\bar{\Om}$.
The nonlinear term $\beta S^qI^p$ describes the interaction of the susceptible and infected population. The homogeneous Neumann boundary condition means that
the individuals cannot cross the boundary $\partial\Omega$. Integrating the sum of the first two equations in \eqref{model-1}, we obtain that the total population $N$ remains a positive constant. In the past decade, this system has been studied extensively, for instance, in \cite{castellano2022effect,Castellano2023multiplicity,CLPZ,deng2016dynamics,deng2023dynamics-corr,li2020asymptotic,PW2021,WJL2018,wu2016asymptotic}.



In a more realistic setting, the total population  is not conserved as individuals may be born, die, or emigrate. Taking into account population recruitment/immigration and disease induced mortality,  we obtain the following modified model that will be investigated in the current paper:
\begin{equation}\label{model-2}
\begin{cases}
\p_tS=d_S\De S+\Lambda(x)-S-\beta(x) S^qI^p+\gamma(x) I,\ \ \ & x\in\Omega, t>0,\cr
\p_tI=d_I\De I +\beta(x) S^qI^p-(\gamma(x)+\eta(x)) I, & x\in\Omega, t>0,\cr
\p_{\nu}S=\partial_{\nu}I=0, & x\in\p\Omega, t>0, \cr
 S(x,0)=S_0(x),\ \ I(x,0)=I_0(x),&x\in\Omega.
\end{cases}
\end{equation}
In \eqref{model-2}, $\Lambda-S$ is  the recruitment term, representing that the susceptible population is subject to a linear growth (the coefficient before $S$ is assumed to be a positive constant and normalized to one for simplicity of presentation); $\eta$ stands for disease induced mortality rate; $\beta$ and $\gamma$  have the same meaning as in \eqref{model-1}. All coefficient functions  are positive H\"older continuous functions on $\bar{\Om}$. {  A more general recruitment term, $\Lambda - \theta(x) S$, where $\theta$ is a positive continuous function on $\bar{\Omega}$, can also be considered. All the results in this paper remain valid with minor modifications. For simplicity of presentation, we use the term $\Lambda - S$ here.} For detailed biological interpretations of \eqref{model-2}, one may refer to \cite{Het1,Het2,li2018diffusive}.

An equilibrium of \eqref{model-2} is a classical solution of the following elliptic system:
\begin{equation}\label{1-2}
\begin{cases}
    d_S\De S+\Lambda(x)-S-\beta(x) S^qI^p+\gamma(x) I=0,\ \ \ & x\in\Omega,\cr
    d_I\De I+\beta(x) S^qI^p-(\gamma(x)+\eta(x)) I=0, & x\in\Omega,\cr
        \p_{\nu}S=\partial_{\nu}I=0, & x\in\p\Omega.
    \end{cases}
\end{equation}
We call a nonnegative equilibrium $(S, I)$  a \textit{disease free equilibrium} (DFE) if $I=0$ and an \textit{endemic equilibrium} (EE) if $I\neq 0$. By the maximum principle, an  EE $(S,I)$ of \eqref{model-2}
satisfies $S,\,I>0$ on $\bar\Omega$.

In this paper, we aim to study the asymptotic profiles of the EEs of \eqref{model-2} as the dispersal rates $d_S$ and/or $d_I$ approach zero. Such results may reflect the consequences of disease control strategies of limiting population movement. In  \cite{PSW}, we 
studied the asymptotic profiles of the EEs of \eqref{model-1}. As in \cite{PSW}, we will focus on the case $0<p\le 1$ and $q>0$, as the model may have  bistable dynamics when $p>1$ \cite{FCGT,PW2021}.  { Biologically, the parameter range $0<p<1$ in $I^p$ 
  represents a scenario where the infection rate increases faster than linearly when the number of infected individuals is low, but increases more slowly than linearly as the number of infected individuals becomes high \cite{LHL}.} For system \eqref{1-2}, previous studies in \cite{li2018diffusive,PWZZ2023} have explored the case of $q=p=1$. Specifically, \cite{li2018diffusive} investigated the scenario where $d_S\to 0$, while both \cite{li2018diffusive} and \cite{PWZZ2023} considered the case of $d_I\to 0$ in a one-dimensional domain $\Omega$. The obtained results in this paper significantly improve or extend those in these earlier works. { We remark that another approach to investigate the impact of population movement constraint on disease spread is to study  the time-dependent systems (e.g. \eqref{model-1} and \eqref{model-2})  with $d_S\approx 0$ or $d_I\approx 0$. We refer interested readers to \cite{Burie1, Burie2, Sa1} for research in this direction.}

We would like to stress that the techniques employed in the analysis of \eqref{model-1} or those utilized in \cite{li2018diffusive,PWZZ2023} for \eqref{1-2} are largely inapplicable to system \eqref{1-2}. For instance, when $p=1$, the equilibrium problem associated with \eqref{model-1} can be reduced to studying certain problems involving an algebraic equation and an elliptic equation. However, the equilibrium problem for system \eqref{1-2} requires the investigation of solutions for a two-species cooperative system, a two-species predator-prey system, or other distinct scenarios. Hence, novel approaches need to be developed to analyze the spatial profiles of solutions to system \eqref{1-2}, particularly in cases involving higher spatial dimensions and the general nonlinear infection mechanism $\beta S^qI^p$.



This paper is  structured as follows. In Section \ref{Main-Results-section}, we state the main results obtained in the paper. The proofs of the main results are given in Sections 3 and 4, where the cases $p=1$ and $0<p<1$ are considered respectively.
In Section \ref{Nuremical-Sim-section}, we present numerical simulations specifically for the case  $p=1$ to support and complement the theoretical results. The discussions and comparison of the results for both models \eqref{model-1} and \eqref{model-2} are also presented in Section \ref{Nuremical-Sim-section}.
The appendix in Section \ref{Appendix-section} includes supplementary results that are utilized in our proofs.

\section{Main results}\label{Main-Results-section}
For system \eqref{model-2}, denote by $(\tilde{S},0)$ the unique DFE, where $\tilde{S}$ is the unique positive solution of
\begin{equation}\label{DFE_mass_bd}
d_S\Delta \tilde{S}-\tilde{S}+\Lambda=0,\ \  x\in\Om;\ \ \
\partial_{\nu}\tilde{S}=0, \ \ x\in\partial\Om.
\end{equation}
 If $p=1$, similar to \cite{allen2008asymptotic,li2018diffusive}, the basic reproduction number of \eqref{model-2} is defined by
\begin{equation}\label{R_0-q-2}
\mathcal{R}_{0}=   \sup_{\varphi\in W^{1,2}(\Om)\setminus\{0\}}\frac{\int_{\Om}\be\tilde S^q\varphi^2}{\int_{\Om}(d_I|\nabla \varphi|^2+(\gamma+\eta)\varphi^2)}.
 \end{equation}
Notice that $\mathcal{R}_{0}$ depends on both  $d_S$ and $d_I$.

We first discuss about the existence, uniqueness and uniform persistence of the solutions, and the existence of EEs for  model  \eqref{model-2}. For our purpose, as in \cite{PW2021}, we need to impose the following assumption on the initial data:
\begin{enumerate}
\item[{\rm(A)}] $S_0$ and $I_0$ are nonnegative continuous functions on $\bar\Omega$. Moreover,
\begin{enumerate}
\item[(i)] The initial value $I_0\geq,\not\equiv0$ on $\bar\Omega$;
\item[(ii)] If $0<q< 1$, $S_0(x)>0$ for all $x\in\bar\Omega$;
\item[(iii)] If $0<p<1$, $I_0(x)>0$ for all $x\in\bar\Omega$.
\end{enumerate}
\end{enumerate}


The proof of the following result is similar to that of  \cite[Theorem 2.1]{PW2021}, and we only sketch it in the appendix.
\begin{prop}\label{theorem_ex} Suppose that $0<p\le 1, q>0$ and {\rm (A)} holds. Then the following statements hold.
\begin{enumerate}
\item[{\rm(i)}] System \eqref{model-2} has a unique global classical solution $(S, I)$ with $S(x, t), I(x, t)>0$ for all $x\in\bar\Omega$ and $t>0$, and there exists $M_\infty>0$ depending on the initial data such that
\begin{equation}\label{th-b1}
\|S(\cdot, t)\|_{L^\infty(\Omega)}, \ \ \|I(\cdot, t)\|_{L^\infty(\Omega)} \le M_\infty, \ \  t\ge 0.
\end{equation}
Moreover, there exists $N_\infty>0$ independent of initial data such that
\begin{equation}\label{th-b2}
\max\{\limsup_{t\rightarrow\infty}\|S(\cdot, t)\|_{L^\infty(\Omega)},  \ \limsup_{t\rightarrow\infty}\|I(\cdot, t)\|_{L^\infty(\Omega)}\} \le N_\infty.
\end{equation}

\item[{\rm(ii)}] { For any $0<p<1$ or $p=1$ and $\mathcal{R}_{0}>1$,}
 there exists $\epsilon_0>0$ independent of initial data such that for any solution $(S, I)$ of \eqref{model-2}, we have
\begin{equation}\label{persist}
\min\{\liminf_{t\rightarrow\infty} S(x, t), \  \liminf_{t\rightarrow\infty}I(x, t)\} \ge \epsilon_0,
\end{equation}
uniformly for $x\in\bar\Omega$. Moreover, \eqref{model-2} has at least one EE.
\end{enumerate}
\end{prop}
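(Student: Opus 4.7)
The plan is to mirror the proof of \cite[Theorem 2.1]{PW2021} for the conservative model \eqref{model-1}, adapting the arguments to the non-conservative setting of \eqref{model-2}, where the total population is controlled by the recruitment term $\Lambda-S$ and the mortality $\eta I$ rather than by a conservation law. I would carry out three steps in order: local existence together with positivity; uniform $L^\infty$-bounds and hence global existence; and uniform persistence, from which the existence of an EE is deduced.

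For local existence, the main subtlety is that when $0<q<1$ or $0<p<1$ the reaction $\beta S^qI^p$ is not locally Lipschitz on the coordinate axes, so I would regularize it as $\beta(\max\{S,\varepsilon\})^q(\max\{I,\varepsilon\})^p$, solve the regularized system by standard parabolic fixed-point theory, and pass to the limit $\varepsilon\downarrow 0$ using the positivity bounds guaranteed by assumption (A). Positivity of $S$ for $t>0$ follows from the strong parabolic maximum principle, since $\partial_t S-d_S\Delta S+S+\beta S^qI^p=\Lambda+\gamma I>0$; positivity of $I$ follows from $I_0\not\equiv 0$ together with the diffusion–absorption structure of the $I$-equation. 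For the $L^\infty$-bound I would integrate the sum of the first two equations in \eqref{model-2} over $\Omega$ using the Neumann boundary condition to obtain
\[
\frac{d}{dt}\int_\Omega(S+I)\,dx=\int_\Omega(\Lambda-S-\eta I)\,dx\le \int_\Omega\Lambda\,dx-c\int_\Omega(S+I)\,dx,
\]
with $c=\min\{1,\min_{\bar\Omega}\eta\}>0$; this dissipation estimate yields an $L^1$-bound on $S+I$ that is uniform in $t$ and becomes uniform in the initial data after large time. Comparison with the linear elliptic problem $d_S\Delta u-u+\Lambda+\gamma\|I(\cdot,t)\|_{L^\infty(\Omega)}=0$ controls $\|S(\cdot,t)\|_{L^\infty(\Omega)}$ in terms of $\|I(\cdot,t)\|_{L^\infty(\Omega)}$, and Moser iteration applied to the $I$-equation with source $\beta S^qI^p$ closes the loop to yield both \eqref{th-b1} and \eqref{th-b2}.

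For part (ii) I would treat the two cases separately. When $0<p<1$, persistence of $I$ is driven by the super-linear growth of $I^p$ near zero: using (A)(iii) together with the uniform upper bound on $S$ already established, I would construct a spatially constant subsolution $v(t)$ of the $I$-equation satisfying an ODE of the form $\dot v=av^p-bv$, whose unique positive equilibrium $(a/b)^{1/(1-p)}$ is globally attracting; comparison then produces the uniform lower bound on $I$, and the $S$-equation gives the corresponding bound on $S$. When $p=1$ and $\mathcal{R}_0>1$, the variational formula \eqref{R_0-q-2} implies that the principal eigenvalue of $d_I\Delta+\beta\tilde S^q-(\gamma+\eta)$ under Neumann conditions is positive, so the DFE $(\tilde S,0)$ is a uniform weak repeller for the semiflow on the positive cone; the abstract persistence theory of Hale–Waltman / Magal–Zhao then upgrades this to the uniform strong persistence \eqref{persist}. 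Existence of at least one EE then follows either from Magal–Zhao's theorem linking uniform persistence to an interior equilibrium on the global attractor, or from Schauder's fixed point theorem applied to the Poincar\'e map on a suitable bounded, positively invariant subset of the strictly positive cone.

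The main obstacle I anticipate is the regularization step in the local-existence argument: with both $q$ and $p$ allowed to be less than $1$, I need to ensure that the positivity bounds afforded by (A) survive the limit $\varepsilon\downarrow 0$ so that the limiting object is a classical rather than merely weak solution. A secondary technical point is the identification of the sign of $\mathcal{R}_0-1$ with the sign of the principal eigenvalue needed for the weak-repeller argument in the $p=1$ case. Both of these issues are already addressed in \cite{PW2021} for the conservative model, so the only genuinely new ingredient here is replacing the conservation identity $\int_\Omega(S+I)=N$ by the dissipation estimate above, which is what makes all subsequent $L^1$-based a priori bounds uniform in $t$ without any conservation of mass.
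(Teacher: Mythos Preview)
Your overall architecture---local existence with positivity, an $L^1$ dissipation estimate, bootstrap to $L^\infty$, then abstract persistence theory---matches the paper's appendix closely, and the $L^1$ estimate and the $p=1$ persistence argument (DFE as uniform weak repeller plus Magal--Zhao) are essentially identical to what the paper does. The substantive divergence is your $L^1\to L^\infty$ step. For $p=1$ the paper does \emph{not} bound $S$ in terms of $\|I\|_{L^\infty}$; instead it observes that the solution of the ODE $u_t=\Lambda_{\max}+r_{\max}^{1/q}-u$, with $r=\gamma/\beta$ and suitable initial value, satisfies $u\ge r_{\max}^{1/q}$, whence $-\beta u^qI+\gamma I\le 0$ for any $I\ge 0$, so $u$ is a supersolution of the $S$-equation \emph{independently of $I$}. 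With $S$ bounded first, $I$ is then controlled via \cite[Lemma 3.1]{PZ2012}. For $0<p<1$ the paper runs a coupled $L^a\times L^b$ bootstrap on $(S,I)$, using the Young-inequality splitting $S^{a-1}I\le\varepsilon S^{q+a-1}I^p+C_\varepsilon I^b$ with $b=(1-p)(a-1)/q+1$, to climb to arbitrary $L^k$ and then invoke semigroup smoothing.

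Your proposed route---bound $\|S\|_{L^\infty}$ by $C(1+\|I\|_{L^\infty})$ via elliptic comparison and then Moser-iterate the $I$-equation---has a circularity problem as written: the coefficient $\beta S^q$ in the $I$-equation now carries a factor of order $(1+\|I\|_{L^\infty})^q$, and for $q>1$ the iteration from the $L^1$ bound will not close to an $\|I\|_{L^\infty}$ estimate independent of itself. The paper's decoupling trick for $p=1$ sidesteps this completely, and the coupled bootstrap for $p<1$ is designed precisely so that the exponents balance. A smaller slip: in your $0<p<1$ persistence paragraph you say you use the ``uniform upper bound on $S$'' to build the subsolution $\dot v=av^p-bv$ for $I$, but what is needed is a uniform \emph{lower} bound on $S$ (so that $\beta S^q\ge a>0$); such a bound is indeed available via comparison on the $S$-equation once $I$ is bounded above, but you should invoke it explicitly.
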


Proposition \ref{theorem_ex} asserts that \eqref{model-2} possesses at least one EE if either $0<p<1$ or $p=1$ and $\mathcal{R}_{0}>1$. In the subsequent part of this section, we present the main findings regarding the asymptotic profiles of the EEs as the dispersal rates $d_S$ and/or $d_I$ tend to zero. Whenever an EE solution of \eqref{model-2} exists, we denote it as $(S, I)$.



For $f\in C(\bar\Omega)$, let
 $$
 f_{\text{min}}=\min_{x\in\bar\Omega}f(x)\ \ \ \text{and} \ \ f_{\text{max}}=\max_{x\in\bar\Omega}f(x).
 $$

Define
$$
h(x)=\frac{\gamma(x)+\eta(x)}{\beta(x)},\ \ \ \ \ x\in\bar\Omega,
$$
which is called the risk function for system \eqref{model-2}  (if $d_S=d_I=0$ and $p=1$, then $\Lambda^q/h$ is the basic reproduction number of \eqref{model-2}). { For convenience, denote 
$$
r(x)=\frac{\gamma(x)}{\beta(x)},\ \ \ \ \ x\in\bar\Omega.
$$}

\subsection{Asymptotic profile of EE as $d_I\to 0$}

We first consider the case $d_I\to 0$, and have the following result.
\begin{tm}\label{TH2.8}
Fix $d_S,\ q>0$. The following statements hold.
\begin{enumerate}
\item[\rm(i)] Suppose that $p=1$. {  Let $\tilde{\Omega}:=\{x\in\bar\Om:\ \tilde S(x)>h^{\frac{1}{q}}(x)\}.$ Then  \eqref{model-2} has an EE $(S, I)$  when $0<d_I\ll 1$ if and only $\tilde{\Omega}\neq\emptyset$. Suppose $\tilde\Omega\neq \emptyset$.} Then up to a subsequence if necessary, $S\to S^*$ wealky in $W^{1,2}(\Omega)$ and weakly-star in $L^{\infty}(\Omega)$, and $I\to \mu$ weakly-star in $[C(\bar\Omega)]^*$ as $d_I\to 0$, where $S^*\in L^{\infty}(\Omega)\cap W^{1,2}(\Omega)$ and $\mu$ is a finite Radon measure on $\bar\Omega$. Moreover, $\mu(\bar\Omega)>0$,
\begin{equation} \label{D6-2}
  \lim_{d_I\to 0}\|I\|_{L^{\infty}(K)}=0
\end{equation}
for every compact set $K\subset \big\{x\in\bar\Omega:\ \tilde{S}(x)<h^{\frac{1}{q}}(x)\big\}$,
\begin{equation}
  d_S\int_{\Omega}\nabla S^*\cdot \nabla \varphi -\int_{\Omega}S^*\varphi+\int_{\Omega}\Lambda \varphi -\int_{\Omega}\eta\varphi d\mu=0,   \label{D6-3}
\end{equation}
for all $\varphi\in W^{1,2}(\Omega)\cap C(\bar\Omega)$,
and
\begin{equation}
   m\le S^*\le h^{\frac{1}{q}}  \label{D6-3-3}
\end{equation}
for some constant $m>0$,
and there exist a measurable set $F_1$ with Lebesgue measure zero and closed sets $F_l$, $l\ge 2$, with $\Omega=\cup_{l\ge 1} F_l$ such that $S^*$ is continuous on $F_l$ for all $l\ge 2$ and
$$
\mu(\{x\in\cup_{l\ge 2}F_l:\ S^*(x)\neq h^{\frac{1}{q}}(x)\})=0
$$
and
$$
\mu(\{x\in\cup_{l\ge 2}F_l:\ S^*(x)= h^{\frac{1}{q}}(x)\}\cup F_1)>0.
$$

\item[\rm(ii)] Suppose that $0<p<1$. Then \eqref{model-2} admits an EE for any $d_I>0$, and  $\left(S,I\right)\rightarrow \left(S_*, I_*\right)$ uniformly on $\bar{\Omega}$ as $d_I\rightarrow0$,
where $I_*>0$ on $\bar\Omega$ is given by
 \begin{equation}
 \label{S-lim-1}
I_*=\left(\frac{S_*^q}{h}\right)^{\frac{1}{1-p}},
 \end{equation}
and $S_*$ is the unique positive solution of
 \begin{equation}
 \label{S-lim-2}
 d_S\Delta  S_*+\Lambda-S_*-\eta \left(\frac{S_*^q}{h}\right)^{\frac{1}{1-p}}=0,\ x\in \Omega;\ \
 \p_{\nu}S_*=0,\ x\in \partial\Omega.
 \end{equation}

\end{enumerate}
\end{tm}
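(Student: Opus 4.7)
The plan is to treat parts (i) and (ii) separately: part (ii) reduces to uniform a priori bounds plus a direct pointwise limit, while part (i) requires weak/weak-$\ast$ compactness together with a delicate principal-eigenvalue analysis.

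For part (ii) with $0<p<1$, persistence from Proposition \ref{theorem_ex}(ii) (whose proof one verifies gives $\epsilon_0$ independent of $d_I$) provides a positive uniform lower bound on $I$, and evaluating the two equations at the extrema of $S$ and $I$ (the sublinear factor $I^{p-1}$ controlling the $I$ equation from above) gives uniform $L^\infty$ bounds on both components. Standard elliptic regularity then yields $C(\bar\Omega)$-compactness of $(S,I)$ along a subsequence, with limit $(S_*,I_*)$. Since the $L^1$ bound on $I$ implies $d_I\int_\Omega I\,\Delta\psi\to 0$ for any $\psi\in C^2(\bar\Omega)$ with $\partial_\nu\psi=0$, testing the $I$ equation and dividing by $I_*>0$ yields \eqref{S-lim-1} pointwise; substitution in the $S$ equation gives \eqref{S-lim-2}. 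Uniqueness of the positive solution of \eqref{S-lim-2} follows from the strict monotonicity of $s\mapsto s+\eta(x)\bigl(s^q/h(x)\bigr)^{1/(1-p)}$ by a standard sub/super-solution sweeping argument, which promotes subsequential convergence to convergence of the full family.

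For part (i), the existence half reduces to the asymptotics of $\mathcal{R}_0$: from \eqref{R_0-q-2} one shows $\mathcal{R}_0\to\max_{\bar\Omega}\tilde S^q/h$ as $d_I\to 0$, the upper direction by testing $\varphi\equiv 1$ and the lower direction by localizing $\varphi$ near a maximizer of $\tilde S^q/h$; so $\mathcal{R}_0>1$ for small $d_I$ precisely when $\tilde\Omega\ne\emptyset$, and Proposition \ref{theorem_ex}(ii) then supplies an EE. The profile analysis begins with a priori bounds: evaluating the $S$ equation at its maximum gives $\|S\|_{L^\infty}\le\max\bigl(\Lambda_{\max},(\gamma_{\max}/\beta_{\min})^{1/q}\bigr)$; integrating the sum of the two equations yields $\int_\Omega S+\int_\Omega\eta I=\int_\Omega\Lambda$, hence a uniform $L^1$ bound on $I$; and testing the $S$ equation by $S$ gives a uniform $W^{1,2}$ bound. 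Weak and weak-$\ast$ compactness produce $S\to S^*$ and $I\to\mu$. To obtain \eqref{D6-3} I would test the $S$ equation against $\varphi\in C^2(\bar\Omega)$ with $\partial_\nu\varphi=0$ and use the $I$ equation to eliminate the nonlinear term $\beta S^qI\varphi$ in favor of $\eta I\varphi$ modulo an $O(d_I)$ term that vanishes by the $L^1$ bound on $I$; density extends to $\varphi\in W^{1,2}\cap C(\bar\Omega)$. Then $\mu(\bar\Omega)>0$ follows by contradiction: $\mu=0$ in \eqref{D6-3} forces $S^*=\tilde S$, which contradicts the upper bound $S^*\le h^{1/q}$ on the nonempty set $\tilde\Omega$.

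The remaining sharper conclusions in (i) hinge on the identity $\lambda_1^N\bigl(-d_I\Delta-(\beta S^q-(\gamma+\eta))\bigr)=0$, which holds because the positive $I$ is an eigenfunction. The pointwise upper bound $S^*\le h^{1/q}$ a.e.\ follows after Rellich-upgrading the weak $W^{1,2}$ convergence to strong $L^r$ convergence for all $r<\infty$: at any Lebesgue point $x_0$ of $\beta S^{*q}-(\gamma+\eta)$, testing the Rayleigh quotient against a smooth cutoff supported in $B_\delta(x_0)$ and sending $d_I\to 0$ first then $\delta\to 0$ forces $\beta(x_0)S^*(x_0)^q\le(\gamma+\eta)(x_0)$. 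The uniform decay \eqref{D6-2} on compacts $K\subset\{\tilde S<h^{1/q}\}$ is established by a local barrier argument: on a slight neighborhood of $K$, a perturbation argument shows $S$ remains close to $\tilde S$ for small $d_I$, so that $\beta S^q-(\gamma+\eta)$ is bounded above by a strictly negative constant there, and then an exponentially small supersolution of the $I$ equation dominates $I$ on $K$. The lower bound $S^*\ge m$ follows from a maximum-principle comparison in \eqref{D6-3}, and the structural description of $\mu$ is a Lusin-theorem decomposition of $S^*\in L^\infty$ into a null set $F_1$ and closed sets $F_l$, $l\ge 2$, on each of which $S^*$ is continuous; rerunning the Rayleigh-quotient argument locally on each $F_l$ then confines $\mu$ to $\{S^*=h^{1/q}\}$ within $\bigcup_{l\ge 2}F_l$. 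I expect the two hardest steps to be the rigorous eigenvalue-to-pointwise passage under only weak-$\ast$ convergence of the coefficient (handled via Lebesgue points) and the local barrier giving \eqref{D6-2}, which requires controlling $S$ locally in the ``safe'' region even though $S^*\ne\tilde S$ globally.
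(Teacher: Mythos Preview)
Your outline is largely right, but there is a genuine gap in part (i), and part (ii) also has two unjustified steps.

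For part (i), the missing key ingredient is the \emph{pointwise} comparison $S<\tilde S$ on $\bar\Omega$ for all small $d_I$. The paper obtains this by introducing the auxiliary function $\kappa=d_SS+d_II$, which satisfies $d_S\Delta\kappa-\kappa+d_S\Lambda-(d_S\eta-d_I)I=0$; when $d_I<d_S\eta_{\min}$ the comparison principle gives $\kappa<d_S\tilde S$, hence $S<\tilde S$ everywhere. This inequality is precisely what drives \eqref{D6-2}: on any compact $K\subset\{\tilde S<h^{1/q}\}$ one immediately has $\beta S^q-(\gamma+\eta)<\beta\tilde S^q-(\gamma+\eta)\le -c<0$, and the barrier/supersolution argument then goes through. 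Your proposed route---``a perturbation argument shows $S$ remains close to $\tilde S$''---cannot work as stated: $S$ converges to $S^*$ only weakly, $S^*\ne\tilde S$ in general (indeed $S^*=\tilde S$ would force $\mu=0$), and nothing in your scheme provides local uniform control of $S$ in the safe region. You correctly flag this as the hardest step, but the resolution is the $\kappa$ transformation, not a perturbation argument. The same inequality $S<\tilde S$ also yields the sharp ``only if'' direction for existence (integrate the strict differential inequality for $I$) without having to handle the borderline case $\max_{\bar\Omega}\tilde S^q/h=1$ via $\mathcal{R}_0$. Finally, for the support of $\mu$, the paper does not rerun the Rayleigh-quotient argument on each $F_l$; it instead proves a product-limit result (their Proposition~\ref{lemma_fg}) showing $S^qI\overset{\ast}{\rightharpoonup}(S^*)^q\mu$ on $\cup_{l\ge 2}F_l$ and then integrates the $I$-equation to get $\int\beta((S^*)^q-h)\,d\mu=0$.

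For part (ii), two steps are unjustified. First, the assertion that the persistence constant $\epsilon_0$ in Proposition~\ref{theorem_ex}(ii) is independent of $d_I$ is not proved in the paper and is not obvious from the abstract persistence machinery. Second, ``standard elliptic regularity'' cannot give $C(\bar\Omega)$-compactness for $I$ because the leading coefficient $d_I\to 0$; and without strong convergence of $I$ you cannot pass to the limit in the nonlinear term $\int\beta S^qI^p\psi$ by testing alone (weak-$\ast$ limits do not commute with $I\mapsto I^p$). The paper avoids both issues: it first gets $S\to S_*$ in $C^1(\bar\Omega)$ from regularity on the $S$-equation (whose right-hand side is uniformly bounded by Lemmas~\ref{Lem-5-1}--\ref{Lem-5-2}), and then applies a singular perturbation argument to the rewritten equation $d_I\Delta I+\bigl[\beta S^q-(\gamma+\eta)I^{1-p}\bigr]I^p=0$, whose unique positive root $I_*=(S_*^q/h)^{1/(1-p)}$ is stable for the reduced dynamics. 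This yields $I\to I_*$ uniformly without any a priori lower bound on $I$.
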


\begin{rk}\label{r-th2.8} Theorem \ref{TH2.8}(i) implies that $S^*$ satisfies $-d_S\Delta S^*=\Lambda-S^*$ in the region $\{x\in\Omega:\ S^*(x)\neq h^{\frac{1}{q}}(x)\}$, and the expression for $\mu$ is given by
$$
\mu(\{x\})=\frac{d_S\Delta (h^{\frac{1}{q}}(x))+\Lambda(x)-h^{\frac{1}{q}}(x)}{\eta(x)},\ \ \ \forall\, x\in(\cup_{l\ge 2}F_l)\cap\{S^*= h^{\frac{1}{q}}\},
$$
when $h\in C^2(\bar\Omega)$.
It is important to note that the distribution of $\mu$ heavily relies on the risk function $h$, as demonstrated in the one-dimensional case by \cite{PWZZ2023}. In fact, for a large class of risk functions $h$ when the habitat $\Omega$ is a finite open interval, \cite[Theorem 2.3]{PWZZ2023} provides a precise description of the disease distribution. It is worth mentioning that the techniques employed in the proof of \cite[Theorem 2.3]{PWZZ2023} are applicable to the general case of $p=1$ and $q>0$, but only for the one-dimensional domain. On the other hand, Theorem \ref{TH2.8}(ii) appears to be new in the existing literature.

\end{rk}

\subsection{Asymptotic profile of EE as $d_S\to 0$}
Next, we consider the case $d_S\to 0$. As a preparation, we first note that a standard singular perturbation argument for elliptic equations shows that $\tilde S$ converges uniformly to $\Lambda$ as $d_S\to 0$ (see, e.g. \cite[Lemma 3.2]{peng2008stationary} or \cite[Lemma 2.4]{du2009effect}). Similar to the analysis in \cite[Subsection 3.2]{li2018diffusive}, the existence of an EE of \eqref{model-2} for small $d_S$ can be ensured by imposing $\lambda_0<0$, where $\lambda_0$ is the principal eigenvalue of the following eigenvalue problem.
 \begin{equation}
 d_I\Delta \varphi+\left(\beta \Lambda^q-\gamma-\eta\right)\varphi+\lambda \varphi=0,\ x\in\Omega;\ \ \
 \p_{\nu}\varphi=0,\ x\in\partial\Omega.
 \label{eigen-prob2}
 \end{equation}

As $d_S\to 0$, we have the following result about the asymptotic behavior of the EE of \eqref{model-2}.

\begin{tm}\label{TH2.9} Fix $d_I,\ q>0$. Then the following statements hold.

\begin{enumerate}
\item[\rm(i)] Suppose that $p=1$. Assume that $\lambda_0<0$ such that \eqref{model-2} has at least one EE for $0<d_S\ll 1$. Then up to a subsequence if necessary,  $\left(S,I\right)\rightarrow \left(S_*, I_*\right)$ uniformly on $\bar{\Omega}$ as $d_S\rightarrow0$, where $S_*>0$ on $\bar\Omega$ fulfills
$\Lambda-S_*-\beta S_*^q I_*+\gamma I_*=0$ and $I_*$ is a positive solution of
 \begin{equation}
 \nonumber
 -d_I\Delta  I_*=\beta S_*^q  I_*-(\gamma+\eta)  I_*,\ x\in \Omega;\ \
 \p_{\nu} I_*=0,\ x\in \partial\Omega.
 \end{equation}

\item[\rm(ii)] Suppose that $0<p<1$. Then \eqref{model-2} has at least one EE for any $d_S>0$, and up to a subsequence if necessary, $\left(S,I\right)\rightarrow \left(S_*, I_*\right)$ uniformly on $\bar{\Omega}$ as $d_S\rightarrow0$,  where $S_*>0$ on $\bar\Omega$ satisfies
$$
\Lambda- S_*-\beta S_*^q I_*^p+\gamma I_*=0
$$
and $I_*$ is a positive solution of
 \begin{equation}
 \label{I-lim}
 -d_I\Delta   I_*=\beta S_*^q  I_*^p-(\gamma+\eta) I_*,\ x\in \Omega;\ \
 \p_{\nu} I_*=0,\ x\in \partial\Omega.
 \end{equation}

\end{enumerate}

\end{tm}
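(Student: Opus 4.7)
The plan is to exploit the asymmetry between the two equations: as $d_S\to 0$ the $S$-equation degenerates while the $I$-equation keeps its fixed diffusion $d_I>0$, so $I$ is the regular component and $S$ should collapse to the unique positive pointwise root of $\Lambda-S-\beta S^qI^p+\gamma I=0$. This is dual to Theorem \ref{TH2.8}, and it is precisely the fixed $d_I$ that keeps the limit $I_*$ smooth rather than measure-valued. The five steps are: (a) $d_S$-uniform $L^\infty$ bounds on $(S,I)$ for $d_S$ in a right-neighborhood of zero, (b) $C^{2,\alpha}$ compactness for $I$, (c) recovery of $S_*$ from the algebraic equation together with uniform convergence $S\to S_*$, (d) passage to the limit in the $I$-equation, (e) verification that $I_*\not\equiv 0$.

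For (a), integrating the sum of the two stationary equations gives $\int_\Omega S+\int_\Omega\eta I=\int_\Omega\Lambda$, which is a $d_S$-uniform $L^1$ bound. Evaluating the $S$-equation at an interior maximum and dropping the nonnegative term $\beta S^qI^p$ yields the pointwise inequality $\|S\|_\infty\le\|\Lambda\|_\infty+\|\gamma\|_\infty\|I\|_\infty$. Because $d_I$ is fixed and $0<p\le 1$ makes the reaction at most linear in $I$, a Moser or Alikakos-type iteration bootstraps the $L^1$ bound into a $d_S$-uniform $L^\infty$ bound on $I$, which then feeds back to bound $S$. Step (b) is then a routine consequence of $W^{2,r}$ and Schauder estimates on the $I$-equation with a uniformly bounded right-hand side: one extracts via Arzela--Ascoli a subsequence along which $I\to I_*$ in $C^2(\bar\Omega)$.

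For (c), given any continuous $I_*\ge 0$, the map $F(x,S):=S+\beta(x)S^qI_*^p(x)-\gamma(x)I_*(x)-\Lambda(x)$ is strictly increasing in $S>0$ with $F(x,0)<0$ and $F(x,+\infty)=+\infty$, so the pointwise equation $F(\cdot,S_*)=0$ has a unique positive continuous solution $S_*$. To upgrade to uniform convergence I would use a sub/super-solution argument: for each $\varepsilon>0$, strict monotonicity of $F$ in $S$ turns $S_*\pm\varepsilon$ into strict super- and sub-solutions of the $S$-equation once $d_S$ and $\|I-I_*\|_\infty$ are small enough that the $O(d_S)$ diffusion and $O(\|I-I_*\|_\infty)$ reaction perturbations are dominated by the $-\varepsilon$ gap. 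The comparison principle (the $S$-equation being non-degenerate in $S$ since $\partial_S F>0$) then gives $\|S-S_*\|_\infty\le\varepsilon$. Step (d) is immediate: passing to the limit in $-d_I\Delta I=\beta S^qI^p-(\gamma+\eta)I$ using $S\to S_*$ uniformly and $I\to I_*$ in $C^2$ produces the stated equation for $I_*$. For (e), when $0<p<1$ I would revisit the persistence argument of Proposition \ref{theorem_ex}(ii) and track its $\epsilon_0$ as uniform in $d_S\in(0,1]$, giving $I_*\ge\epsilon_0>0$; when $p=1$, the hypothesis $\lambda_0<0$ together with the convergence $\tilde S\to\Lambda$ renders the principal eigenvalue of $d_I\Delta+(\beta S^q-\gamma-\eta)$ negative for small $d_S$, and testing the $I$-equation against the associated positive eigenfunction produces a $d_S$-uniform lower bound on $\|I\|_\infty$; the strong maximum principle applied to the limit equation then gives $I_*>0$ on $\bar\Omega$.

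The main obstacle I anticipate is step (a): the coupling $\|S\|_\infty\lesssim\|I\|_\infty$ feeds back into the reaction of the $I$-equation, so the $d_S$-uniform $L^\infty$ bound on $I$ is where the real technical content lies. Once that is secured, the singular perturbation argument for $S$ and the elliptic compactness for $I$ combine essentially mechanically to produce the conclusions in both parts (i) and (ii). A secondary subtlety is step (e) in the $p=1$ case, where the standard persistence statement does not obviously transfer uniformly in $d_S$ and one must instead argue directly through the eigenvalue $\lambda_0$.
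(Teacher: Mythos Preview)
Your five-step outline matches the paper's structure exactly (uniform bounds $\Rightarrow$ compactness on $I$ $\Rightarrow$ singular perturbation on $S$ $\Rightarrow$ pass to the limit $\Rightarrow$ positivity of $I_*$), and steps (b)--(d) are done essentially as you describe. The two places where the paper proceeds differently are precisely the two you flag as delicate, (a) and (e), and in both the paper's argument is shorter and avoids the difficulties you anticipate.

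For (a), the paper does not iterate. It introduces $w:=d_SS+d_II$, which satisfies $\Delta w+\Lambda-S-\eta I=0$; at a maximum point $x_0$ of $w$ one reads off $S(x_0)\le\Lambda_{\max}$ and $I(x_0)\le\Lambda_{\max}/\eta_{\min}$, so $d_I\|I\|_\infty\le w(x_0)\le d_S\Lambda_{\max}+d_I\Lambda_{\max}/\eta_{\min}$. This gives the $d_S$-uniform bound $\|I\|_\infty\le(d_S/d_I+1/\eta_{\min})\Lambda_{\max}$ in one line, and then your maximum-principle bound $\|S\|_\infty\le\Lambda_{\max}+\gamma_{\max}\|I\|_\infty$ closes. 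The circularity you worry about---$\|S\|_\infty$ appearing as a coefficient in the $I$-reaction while itself depending on $\|I\|_\infty$---never arises. Your Moser/Alikakos route would need to absorb a coefficient of size $\|S\|_\infty^q\sim\|I\|_\infty^q$ in the iteration, which for general $q>0$ is not obviously closable; the $w$-trick is what makes step (a) painless.

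For (e) in the case $0<p<1$, the paper does not try to make the persistence constant $\epsilon_0$ uniform in $d_S$ (which would require reopening that proof). Instead it argues by contradiction via two maximum-principle facts: evaluating the $I$-equation at its minimum gives $I_{\min}^{1-p}\ge c\,S_{\min}^q$, so $I_{\min}\to 0$ forces $S_{\min}\to 0$; but evaluating the $S$-equation at its minimum gives $\Lambda_{\min}\le S_{\min}+\beta S_{\min}^qI^p(z_0)\to 0$, contradicting $\Lambda_{\min}>0$. For $p=1$ the paper simply cites \cite{li2018diffusive}, and your eigenvalue argument via $\lambda_0<0$ is in the same spirit.

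In short: your plan is correct, but the paper replaces the two hardest steps by direct maximum-principle arguments on $w=d_SS+d_II$ and on $S,I$ at their extrema, which is both more elementary and sidesteps the obstacles you identified.
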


\begin{rk}\label{r-th2.9a} Theorem \ref{TH2.9}(i) generalizes \cite[Theorem 3.1]{li2018diffusive}, which specifically addresses the case $p=q=1$, and Theorem \ref{TH2.9}(ii) seem to be new in the literature.
\end{rk}

 \subsection{Asymptotic profile of EE as $d_S,\,d_I\to 0$}
 The following results state the asymptotic behavior of the EE of \eqref{model-2} as $d_S$ and $d_I$ approach zero. We have to impose the condition $\{x\in\bar\Om:\ \Lambda(x)>h^{\frac{1}{q}}(x)\}\ne\emptyset$ such that  system  \eqref{model-2} admits at least one EE when both movement rates are sufficiently small.

\begin{tm}\label{TH2.10} Fix $q, \sigma>0$. Then the following statements hold.

\begin{enumerate}
\item[\rm(i)] Suppose that $p=1$. Let $\{x\in\bar\Om:\ \Lambda(x)>h^{\frac{1}{q}}(x)\}\neq\emptyset$ such that  \eqref{model-2} has at least one EE  when $0<d_S, d_I\ll 1$. Suppose $\tilde\Omega\neq \emptyset$.
Then up to a subsequence if necessary, $(S,I)\to (S^*,I^*)$ weakly-star in $L^{\infty}(\Omega)$ as $d_{I}+|\frac{d_{I}}{d_{S}}-\sigma|\to 0$, where $S^*,\,I^*\in L^{\infty}(\Omega)$ satisfy
    \begin{eqnarray}
        &&\Lambda-S^*-\eta I^*=0\ \ \ \text{a.e in}\ \Om,  \label{TH9-eq-1}\\
       && \min\{\Lambda_{\min},r_{\min}^{\frac{1}{q}}\}\le S^*\le \Lambda_{\max} \ \ \ \text{a.e. in}\ \Om, \label{TH9-eq-2}\\
       && 0\le I^*\le \frac{1}{\min\{\sigma, {\eta}\}}(\Lambda-h^{\frac{1}{q}})_+ \ \   \ \text{a.e. in}\ \Om \ \text{and}\ \int_{\Om}I^*>0, \label{TH9-eq-3}\\
        &&\frac{1}{\eta}(\Lambda-h)_+\le I^* \ \text{and}\ S^*\le \min\{\Lambda,\ h\}\ \ \text{a.e. in}\ \Om\  \text{if}\ q=1.\label{TH9-eq-4}
   \end{eqnarray}
In addition if  $\sigma\geq\eta_{\max}$,  we have
 \begin{equation}\nonumber
(S,I)\to\left(\min\{\Lambda,\ h^{\frac{1}{q}}\},\ \frac{1}{\eta}\big(\Lambda-h^{\frac{1}{q}}\big)_+\right)
 \end{equation}
uniformly on $\bar\Om$ as $d_I+|\frac{d_I}{d_S}-\sigma|\to 0$.

\item[\rm(ii)] Suppose that $0<p<1$. Then \eqref{model-2} has at least one EE for any $d_S, d_I>0$.  Up to a subsequence if necessary, $(S,I)\to (S^*,I^*)$ weakly-star in $L^{\infty}(\Omega)$ as $d_{I}+|\frac{d_{I}}{d_{S}}-\sigma|\to 0$, where $S^*,\,I^*\in L^{\infty}(\Omega)$ and satisfy
 \begin{eqnarray}
        &&S^*+\eta I^*=\Lambda\ \ \ \ \ \ \ \ \ \ \text{a.e.\ in}\ \Om,  \label{model-di-ds-eq-1}\\
       && c_*\le S^*\le \Lambda_{\max}\ \ \quad \   \ \text{a.e.\ in}\ \Om, \label{model-di-ds-eq-2}\\
       && c_*\le I^*\le \frac{\Lambda_{\max}}{\eta_{\min}} \ \ \ \quad  \ \text{a.e.\ in}\ \Om \label{model-di-ds-eq-3}
   \end{eqnarray}
   for some constant $c_*>0$. In addition if  $\sigma>\eta_{\max}$, then $(S,I)\to(S^*,I^*)$ uniformly on $\bar\Om$ as $d_I+|\frac{d_I}{d_S}-\sigma|\to 0$, where $S^*=h^{\frac{1}{q}}\left(I^*\right)^{\frac{1-p}{q}}$ and $I^*$ is the unique positive solution of
   \begin{equation}
       \Lambda-\eta I^*-h^{\frac{1}{q}}\left(I^*\right)^{\frac{1-p}{q}}=0.
   \end{equation}

\end{enumerate}

\end{tm}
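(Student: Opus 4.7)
My plan combines weak-star compactness with a priori bounds, exploits the \emph{sum equation} (which is linear in $(S,I)$) to identify the limit, and uses an auxiliary linear function $\Phi=S+(d_I/d_S)I$ to extract the $\sigma$-dependent pointwise information. I will proceed in seven steps: (1) establish existence of an EE for small $(d_S,d_I)$; (2) obtain uniform $L^\infty$ bounds; (3) pass to a weak-star subsequential limit and identify $\Lambda=S^*+\eta I^*$ via the sum equation; (4) prove pointwise bounds on $S^*$ by extremum analysis; (5) derive bounds on $I^*$ via $\Phi$; (6) establish persistence $\int_\Om I^*>0$; (7) upgrade to uniform convergence when $\sigma\ge\eta_{\max}$ (part (i)) or $\sigma>\eta_{\max}$ (part (ii)).

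For existence in (i), I will invoke Proposition~\ref{theorem_ex}(ii), which gives an EE whenever $\mathcal R_0>1$. Since $\tilde S\to\Lambda$ uniformly as $d_S\to 0$ by singular perturbation of \eqref{DFE_mass_bd}, the principal eigenvalue of the Neumann operator $-d_I\Delta+(\gamma+\eta-\beta\tilde S^q)$ tends to $\min_{\bar\Om}(\gamma+\eta-\beta\Lambda^q)$, which is strictly negative under $\{\Lambda>h^{1/q}\}\ne\emptyset$; hence $\mathcal R_0>1$ for all sufficiently small $(d_S,d_I)$. Existence in (ii) will be unconditional by Proposition~\ref{theorem_ex}(ii). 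Integrating the sum of the two equations yields $\int_\Om(S+\eta I)=\int_\Om\Lambda$ for uniform $L^1$ bounds, which I will bootstrap to uniform $L^\infty$ bounds via standard elliptic $L^p$-iteration combined with pointwise maximum principle estimates (at an interior maximum $x_1$ of $I$, $d_I\Delta I(x_1)\le 0$ forces $S(x_1)^qI(x_1)^{p-1}\ge h(x_1)$; in particular, for $p=1$, $S(x_1)\ge h^{1/q}(x_1)$). Banach--Alaoglu will then extract a subsequence along which $(S,I)$ converges weakly-star in $L^\infty(\Om)$ to some $(S^*,I^*)$.

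Testing the sum of the two equations against $\varphi\in C^2(\bar\Om)$ with $\p_\nu\varphi=0$ and integrating by parts gives
\begin{equation*}
\int_\Om(d_S S+d_I I)\Delta\varphi\,dx+\int_\Om(\Lambda-S-\eta I)\varphi\,dx=0;
\end{equation*}
the first integral vanishes as $d_S,d_I\to 0$ by the uniform bounds, and weak-star convergence in the second then yields $\int_\Om(\Lambda-S^*-\eta I^*)\varphi\,dx=0$ for all such $\varphi$, whence \eqref{TH9-eq-1} (and analogously \eqref{model-di-ds-eq-1}). The upper bound $S^*\le\Lambda_{\max}$ will be immediate. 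For the lower bound, Hopf's lemma (with $\p_\nu S=0$) forces the minimum of $S$ to be attained at an interior point $x_0$, at which $d_S\Delta S(x_0)\ge 0$ yields $\Lambda(x_0)\le S(x_0)+I(x_0)(\beta(x_0)S(x_0)^q-\gamma(x_0))$; a case analysis on the sign of $\beta S^q-\gamma$ at $x_0$ delivers $S\ge\min\{\Lambda_{\min},r_{\min}^{1/q}\}$ uniformly in $(d_S,d_I)$, a bound inherited by the weak-star limit and giving \eqref{TH9-eq-2} (and \eqref{model-di-ds-eq-2}). For the bounds on $I^*$ I introduce $\Phi=S+(d_I/d_S)I$, which satisfies
\begin{equation*}
-d_S\Delta(\Phi-\tilde S)+(\Phi-\tilde S)=(d_I/d_S-\eta)I;
\end{equation*}
the Neumann maximum principle for $-d_S\Delta+1$ then gives $\Phi\ge\tilde S$ when $d_I/d_S\ge\eta_{\max}$ and $\Phi\le\tilde S$ when $d_I/d_S\le\eta_{\min}$. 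Combined with \eqref{TH9-eq-1}, with the pointwise inequality $S\ge h^{1/q}$ at an interior maximum of $I$ (for (i)), and with the dual analysis at the minimum of $I$ (for \eqref{TH9-eq-4} when $q=1$), these inputs will yield the $\sigma$-dependent bounds \eqref{TH9-eq-3}--\eqref{TH9-eq-4}.

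For persistence $\int_\Om I^*>0$, I will adapt the eigenvalue-based arguments of \cite{li2018diffusive,PWZZ2023}: heuristically, if $\int_\Om I^*=0$ then $S^*=\Lambda$ a.e., but since $I$ is the positive principal eigenfunction of $d_I\Delta+(\beta S^q-\gamma-\eta)$ with eigenvalue $0$, the singular limit would force $\max_{\bar\Om}(\beta\Lambda^q-\gamma-\eta)=0$, contradicting $\{\Lambda>h^{1/q}\}\ne\emptyset$. For $\sigma\ge\eta_{\max}$ in (i), I will combine the comparison $\Phi\ge\tilde S$ with a matched sub-/super-solution construction for \eqref{1-2} centered on $(\min\{\Lambda,h^{1/q}\},(\Lambda-h^{1/q})_+/\eta)$ to upgrade weak-star to uniform convergence. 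Part (ii) will be parallel; for $\sigma>\eta_{\max}$ the limit is the unique positive solution of $\Lambda-\eta I^*-h^{1/q}(I^*)^{(1-p)/q}=0$ (the left side strictly decreasing and the right side strictly increasing in $I^*>0$, so uniqueness is immediate and uniform convergence follows from uniqueness of all weak-star subsequential limits). The hard part will be that weak-star convergence in $L^\infty$ does not preserve the nonlinear products $\beta S^qI^p$; I circumvent this by working only with the (linear) sum equation and with the (linear) auxiliary $\Phi$. The most delicate sub-step will be verifying the $I^*$-bound \eqref{TH9-eq-3} when $\sigma\in(0,\eta_{\max})$, where $\eta(x)-\sigma$ changes sign across $\Om$ so the clean $\Phi\gtreqless\tilde S$ comparison breaks down; this will require a local splitting of $\Om$ into subregions of constant sign and separate passage to the limit on each.
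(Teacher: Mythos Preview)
Your overall skeleton---the change of variables $\Phi=u=S+(d_I/d_S)I$, the sum-equation test to get \eqref{TH9-eq-1}, the extremum bounds for $S$, and the eigenvalue contradiction for $\int_\Om I^*>0$---matches the paper's approach closely. However, there are two genuine gaps.

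First, your route to the pointwise upper bound in \eqref{TH9-eq-3} does not go through. The comparison $\Phi\gtrless\tilde S$ only holds when $d_I/d_S$ dominates $\eta_{\max}$ or is dominated by $\eta_{\min}$, and even in those regimes it does not by itself produce the bound $I^*\le\frac{1}{\min\{\sigma,\eta\}}(\Lambda-h^{1/q})_+$; the inequality $S\ge h^{1/q}$ at a single maximum point of $I$ is far from an a.e.\ statement. Your proposed fix---``local splitting of $\Om$ into subregions of constant sign of $\eta-\sigma$''---cannot work, because the elliptic maximum principle is global and you have no control of $\Phi-\tilde S$ on the interfaces. The paper's key device is different: it \emph{perturbs the coefficients} by setting $\tilde\eta=\min\{\eta,\sigma\}-\varepsilon$ and $\tilde\gamma=\gamma+\eta-\tilde\eta$, so that $\tilde h=h$ is unchanged but now $\sigma>\tilde\eta_{\max}$ holds \emph{globally}. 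With these coefficients the $(u,v)$-system becomes a cooperative system for which $(u,v)$ is a subsolution; a separate appendix result (their Proposition~\ref{prop-6.2}, proved via a monotone iteration) identifies the limit of \emph{every} solution of that cooperative system, and comparison then yields the uniform upper bound $v\le\frac{\sigma}{\tilde\eta}(\Lambda-h^{1/q})_+ +o(1)$. Letting $\varepsilon\to 0$ gives \eqref{TH9-eq-3}. This coefficient-perturbation trick, together with the iterative analysis of the cooperative limit system, is the missing idea.

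Second, in part (ii) you write that ``uniform convergence follows from uniqueness of all weak-star subsequential limits.'' This is false: uniqueness of weak-star limits yields weak-star convergence of the full sequence, not uniform convergence. The paper obtains uniform convergence directly, again via monotone iteration: it constructs explicit increasing and decreasing sequences $(\underline u_n,\underline v_n)$ and $(\overline u_n,\overline v_n)$ (Lemma~\ref{l3.2}) converging uniformly to the target, and shows by induction and singular-perturbation comparison that $\liminf(u,v)\ge(\underline u_n,\underline v_n)$ and $\limsup(u,v)\le(\overline u_n,\overline v_n)$ uniformly for every $n$. The same iteration underlies the uniform convergence in part (i) when $\sigma\ge\eta_{\max}$; your ``matched sub/super-solution construction'' is too vague to capture this.
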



\begin{rk}\label{r-th2.9} Theorem \ref{TH2.10} appears to be a new result, even in the special case of $p=q=1$ for Theorem \ref{TH2.10} (i), which corresponds to the mass action infection function, or in the context of a one-dimensional domain.
\end{rk}



\vskip10pt


\section{Proofs of the main results: case of $p=1$} In this section, we assume that $p=1$ and establish Theorem \ref{TH2.8}(i) and Theorem \ref{TH2.10}(i). Since the proof of Theorem \ref{TH2.9}(i) is similar to that of \cite[Theorem 3.1]{li2018diffusive}, we  omit it here.


\subsection{Some useful lemmas} The following lemma plays a vital role in our later analysis.
 \begin{lem}[{\cite[Lemma 2.2]{jiang2007priori}, \cite[Lemma 3.1]{peng2013qualitative}}]\label{l3.1} Suppose that $w\in C^2({\bar\Omega})$
and $\partial_{\nu}w=0$ on $\partial\Omega$. The following
statements hold.

{\rm (i)} If $w$ attains a local maximum at $x_1\in\bar\Omega$,
then $\nabla w(x_1)=0$ and $\Delta w(x_1)\leq0$.

{\rm (ii)} If $w$ attains a local minimum at $x_2\in\bar\Omega$,
then $\nabla w(x_2)=0$ and $\Delta w(x_2)\geq0$.
 \end{lem}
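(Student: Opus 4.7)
The plan is to split the statement according to whether the extremum is attained in the interior or on the boundary of $\Omega$, and by the symmetry $w \leftrightarrow -w$ it suffices to treat part (i). If $x_1 \in \Omega$ is an interior local maximum, then the conclusion $\nabla w(x_1)=0$ and $\Delta w(x_1)\leq 0$ is simply the classical interior criterion: the Hessian of a $C^2$ function at an interior local max is negative semi-definite, and taking the trace gives the Laplacian bound. So the real content lies in the case $x_1\in \partial\Omega$, where I cannot directly invoke the interior argument.

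For $x_1 \in \partial\Omega$, I would begin by fixing a $C^2$ local chart that straightens the boundary: choose coordinates $(y_1,\dots,y_n)$ in a neighborhood of $x_1$ so that $\partial\Omega$ corresponds to $\{y_n=0\}$, $\Omega$ to $\{y_n>0\}$, and the inward unit normal at $x_1$ coincides with the coordinate direction $e_n$. Since $w$ restricted to the hypersurface $\{y_n=0\}$ has a local maximum at $x_1$, each tangential first derivative $\partial_{y_i}w(x_1)$ for $i=1,\dots,n-1$ vanishes. Combined with the Neumann condition $\partial_\nu w(x_1)=0$, which handles the remaining normal direction, this gives $\nabla w(x_1)=0$.

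For the Laplacian bound, I would use the standard decomposition on a tubular neighborhood of $\partial\Omega$,
\begin{equation*}
\Delta w \;=\; \Delta_{\partial\Omega} w \;+\; \partial^2_{\nu\nu} w \;+\; H(x)\,\partial_\nu w,
\end{equation*}
where $\Delta_{\partial\Omega}$ is the Laplace--Beltrami operator on the level surfaces parallel to $\partial\Omega$ and $H$ is the associated mean curvature. Evaluating at $x_1$: the last term is killed by $\partial_\nu w(x_1)=0$; the first term is non-positive because $w|_{\partial\Omega}$ attains a local maximum at $x_1$, so the tangential Hessian of $w$ at $x_1$ is negative semi-definite and its trace is $\leq 0$; and for the second term I would perform a one-sided Taylor expansion along the inward normal direction, $w(x_1 - t\nu) = w(x_1) + \frac{t^2}{2}\,\partial^2_{\nu\nu} w(x_1) + o(t^2)$ as $t\downarrow 0$, so that local maximality forces $\partial^2_{\nu\nu}w(x_1) \leq 0$. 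Summing the three non-positive contributions yields $\Delta w(x_1)\leq 0$, and part (ii) follows by applying part (i) to $-w$.

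The main obstacle I foresee is the clean handling of the Laplacian decomposition at a point where the boundary chart is only $C^2$: one has to choose the straightening coordinates so that the induced metric on $\partial\Omega$ reduces to the Euclidean one at $x_1$, which is what justifies identifying $\Delta_{\partial\Omega} w(x_1)$ with the ordinary trace of the tangential Hessian and avoids spurious contributions from Christoffel symbols. Once this metric normalization is carried out at the single point $x_1$, all three sign estimates above become strictly local and the conclusion is immediate.
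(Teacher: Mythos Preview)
The paper does not supply a proof of this lemma; it is quoted directly from \cite{jiang2007priori,peng2013qualitative} and used as a black box throughout Sections~3--4. So there is no ``paper's own proof'' to compare against, and the relevant question is simply whether your argument is sound.

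Your argument is correct. The interior case is standard, and for $x_1\in\partial\Omega$ your two-step approach---first obtaining $\nabla w(x_1)=0$ from the tangential extremum plus the Neumann condition, then bounding $\Delta w(x_1)$ via the splitting $\Delta = \Delta_{\partial\Omega} + \partial_{\nu\nu}^2 + H\partial_\nu$---is a clean and standard route. The one-sided Taylor expansion along the inward normal does give $\partial_{\nu\nu}^2 w(x_1)\le 0$ once $\partial_\nu w(x_1)=0$, and the mean-curvature term is killed for the same reason. The metric-normalisation issue you flag is genuine but, as you note, is resolved by taking geodesic normal coordinates on $\partial\Omega$ centred at $x_1$ together with the normal direction; this forces the Christoffel symbols of the induced metric to vanish at $x_1$ so that $\Delta_{\partial\Omega} w(x_1)$ coincides with the trace of the tangential Hessian.

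An alternative, slightly more elementary route you might compare with (and which is closer in spirit to how \cite{jiang2007priori} argues) is to straighten the boundary and extend $w$ by even reflection across $\{y_n=0\}$; the Neumann condition guarantees the reflected function is $C^2$ across the flat piece, $x_1$ becomes an interior local maximum of the extension, and the conclusion follows from the interior case plus the observation that the change of variables preserves the sign of $\Delta w$ at a critical point. Either method works; yours avoids the reflection bookkeeping at the cost of invoking the Laplacian decomposition.
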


\vskip6pt
We can apply Lemma \ref{l3.1} to establish positive upper and lower bounds for the S-component of the EEs.
\begin{lem}\label{lemma_Sbound} Let $(S, I)$ be an EE of \eqref{1-2} with $p=1$. Then it holds that
    $$
    \min\left\{\Lambda_{\min},\ \, r_{\min}^{\frac{1}{q}}\right\}\le S_{\min}\le S_{\max}\le \max\left\{\Lambda_{\max},\ \, r_{\max}^{\frac{1}{q}}\right\},\ \ \ \forall d_S,\, d_I>0.
    $$
\end{lem}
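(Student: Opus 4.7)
The plan is to prove the two-sided bound by applying the boundary maximum principle of Lemma \ref{l3.1} to the $S$-component at its global minimum and maximum points on $\bar\Omega$, and then performing a short sign-case analysis on the reaction term $\gamma(x)I - \beta(x)S^qI$ appearing in the first equation of \eqref{1-2}.

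For the lower bound, let $x_2\in\bar\Omega$ be a point where $S$ attains its minimum. By Lemma \ref{l3.1}(ii), $\Delta S(x_2)\geq 0$, so the first equation of \eqref{1-2} evaluated at $x_2$ gives
\[
0\leq d_S\Delta S(x_2) = -\Lambda(x_2) + S_{\min} + I(x_2)\bigl(\beta(x_2)S_{\min}^q - \gamma(x_2)\bigr).
\]
Rearranging, $S_{\min}\geq \Lambda(x_2) + I(x_2)\bigl(\gamma(x_2)-\beta(x_2)S_{\min}^q\bigr)$. Now I split cases. If $S_{\min}^q\leq r(x_2)$, the $I(x_2)$-term is nonnegative (since $I>0$), so $S_{\min}\geq \Lambda(x_2)\geq \Lambda_{\min}$. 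Otherwise $S_{\min}^q > r(x_2)\geq r_{\min}$, i.e.\ $S_{\min} > r_{\min}^{1/q}$. Either way $S_{\min}\geq \min\{\Lambda_{\min},\,r_{\min}^{1/q}\}$.

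For the upper bound I run the symmetric argument at a maximum point $x_1\in\bar\Omega$ of $S$. Lemma \ref{l3.1}(i) gives $\Delta S(x_1)\leq 0$, so the first equation at $x_1$ yields
\[
S_{\max}\leq \Lambda(x_1) + I(x_1)\bigl(\gamma(x_1)-\beta(x_1)S_{\max}^q\bigr).
\]
If $S_{\max}^q\geq r(x_1)$, the $I(x_1)$-term is nonpositive, so $S_{\max}\leq \Lambda(x_1)\leq \Lambda_{\max}$. Otherwise $S_{\max} < r(x_1)^{1/q}\leq r_{\max}^{1/q}$. In both cases $S_{\max}\leq \max\{\Lambda_{\max},\,r_{\max}^{1/q}\}$, which together with the previous paragraph completes the proof.

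There is no serious obstacle here: the classical maximum principle of Lemma \ref{l3.1} is directly available since $S\in C^2(\bar\Omega)$ satisfies Neumann boundary conditions, and the $I$-equation is not even needed. The only subtlety is being careful that the case distinction at the extremum exploits the sign of $\gamma - \beta S^q$ rather than of $\gamma - \beta S^q I$, which is what produces the $r^{1/q}$ branch in the bound. The bounds are uniform in $d_S$ and $d_I$ because both invocations of Lemma \ref{l3.1} discard the Laplacian term with a favorable sign.
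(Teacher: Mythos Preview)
Your proof is correct and follows essentially the same approach as the paper: apply Lemma~\ref{l3.1} at the extremum points of $S$ and do a sign-case analysis on the reaction term of the first equation. The only cosmetic difference is that the paper splits cases according to whether $S(x_0)\lessgtr\Lambda_{\max}$ (resp.\ $S(y_0)\lessgtr\Lambda_{\min}$), whereas you split according to whether $S^q\lessgtr r$ at the extremum; the two organizations are logically equivalent.
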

\begin{proof} Suppose that $S_{\max}=S(x_0)$ for some $x_0\in\bar\Omega$. Then by Lemma \ref{l3.1}(i), we have
$$
\Lambda(x_0)-S(x_0)-\beta(x_0)S^q(x_0)I(x_0)+\gamma(x_0)I(x_0)\ge 0,
$$
which implies
$$ S(x_0)+S^q(x_0)\beta(x_0)I(x_0)\le \Lambda(x_0)+\gamma(x_0)I(x_0)\le \Lambda_{\max}+r_{\max}\beta(x_0)I(x_0).$$
Hence,
\begin{equation}\label{lamS}
(S^q(x_0)-r_{\max})\beta(x_0)I(x_0)\le \Lambda_{\max}-S(x_0).
\end{equation}
If $\Lambda_{\max}\ge S(x_0)$, then we have
$$
S_{\max}\le \max\big\{\Lambda_{\max}, \ r_{\max}^{\frac{1}{q}}\big\}.
$$
On the other hand, if $ \Lambda_{\max}<S(x_0)$, we  obtain from \eqref{lamS} that $$
S(x_0)<r_{\max}^{\frac{1}{q}}\le \max\big\{\Lambda_{\max},\ r_{\max}^{\frac{1}{q}}\big\}.
$$ Consequently, $S_{\max}\le \max\big\{\Lambda_{\max},\ r_{\max}^{\frac{1}{q}}\big\}$ for all $d_S,\, d_I>0$.

Similarly, suppose that $S_{\min}=S(y_0)$ for some $y_0\in\bar\Omega$. Thanks to Lemma \ref{l3.1}(ii), we then obtain
\begin{equation}\label{lamss}
(r_{\min}-S^q(y_0))\beta(y_0)I(y_0)\le S(y_0)-\Lambda_{\min}.
\end{equation}
Hence, if $S(y_0)\ge\Lambda_{\min}$,
$$
\min\big\{\Lambda_{\min},\ r^{\frac{1}{q}}_{\min}\big\}\le S_{\min}.
$$
However, if $S(y_0)<\Lambda_{\min}$, it follows from \eqref{lamss} that $$
S(y_0)>r_{\min}^{\frac{1}{q}}\ge \min\big\{\Lambda_{\min},\ r^{\frac{1}{q}}_{\min}\big\}.
$$
Therefore, we obtain $\min\big\{\Lambda_{\min},\ r_{\min}^{\frac{1}{q}}\big\}\le S_{\min}$ for all $d_S,\, d_I>0$.
\end{proof}

\subsection{Small $d_I$:\ proof of Theorem \ref{TH2.8}{\rm (i)}}
We first investigate the asymptotic profiles of the EE of \eqref{model-2} with $p=1$ as $d_I\to0$ and prove Theorem \ref{TH2.8}(i).

\begin{proof}[Proof of Theorem \ref{TH2.8}{\rm (i)}] Suppose that \eqref{model-2} has an EE $(S,I)$  for $0<d_I\ll 1$.
Define
\begin{equation}\label{D2-aa}
\kappa=d_SS+d_II.
\end{equation}
It is easy to see from \eqref{1-2} that $(\kappa,I)$ solves
\begin{equation}\label{D1}
    \begin{cases}
        d_S\Delta \kappa -\kappa +d_S\Lambda -(d_S\eta-d_I)I=0,\ \ \ & x\in\Omega,\cr
\displaystyle        d_I\Delta I +\Big[\frac{\beta}{d_S^q}(\kappa-d_II)^q-(\eta+\gamma)\Big]I=0, & x\in\Omega,\cr
        \partial_{\nu}\kappa=\partial_{\nu}I=0, &x\in\partial\Omega.
     \end{cases}
\end{equation}
If $d_I<d_S\eta$, it easily follows from the comparison principle for elliptic equations and the first equation of \eqref{D1} that
\begin{equation}\label{D2}
    \kappa< d_S\tilde{S}\ \ \ \mbox{on}\ \bar\Omega,
\end{equation}
where $\tilde{S}$ is the unique solution of \eqref{DFE_mass_bd}. This and \eqref{D2-aa} imply that for all $d_S>0$ and $d_I< d_S\eta_{\min}$,
\begin{equation}\label{D3}
    S< \tilde{S}\ \ \ \mbox{on}\ \bar\Omega.
\end{equation}
So if $d_I<d_S\eta_{\min}$,
\begin{equation}\label{D4}
\begin{cases}
0< d_I\Delta I+(\beta \tilde{S}^q-(\eta+\gamma))I,\ \ \ & x\in\Omega,\cr
0=\partial_{\nu}I, &x\in\partial\Omega.
\end{cases}
\end{equation}
 Integrating over $\Omega$, we get from \eqref{D4} that
$$
\int_\Omega(\beta \tilde{S}^q-(\eta+\gamma))I>0,
$$
which yields $\tilde{\Omega}=\{x\in\bar\Omega:\ \tilde{S}(x)>h^{\frac{1}{q}}(x)\}\ne\emptyset$.

On the other hand, an analysis similar to \cite[Lemma 2.3]{allen2008asymptotic} shows that
$\mathcal{R}_{0}\to\max_{\bar\Omega}\frac{\beta\tilde S^q}{\gamma+\eta}$ as $d_I\to0$.
Thus, by Proposition \ref{theorem_ex}, we see that \eqref{1-2} with $p=1$ possesses at least one EE for all small $d_I$ provided that $\tilde{\Omega}\ne\emptyset$. Therefore, the EE exists for $0<d_I\ll 1$ if and only if $\tilde{\Omega}\ne\emptyset$.

In the sequel, we suppose that $\tilde{\Omega}\ne\emptyset$ and let $(S, I)$ be the EE  for $0<d_I\ll 1$.
Adding the first two equations in \eqref{1-2} and  integrating  over $\Omega$, we obtain
\begin{equation}\label{D6-ab}
\int_\Omega(S+\eta I)=\int_\Omega\Lambda,
\end{equation}
which gives
\begin{equation}\label{D6-ac}
\int_{\Omega}I\le \frac{1}{\eta_{\min}}\int_{\Omega}\Lambda.
\end{equation}
Thanks to \eqref{D4} and \eqref{D6-ac}, we can employ similar arguments as in the proof of \cite[Theorem 2.1]{PSW} to deduce that
\begin{equation}\label{D6}
    \lim_{d_I\to 0}\|I\|_{L^{\infty}(K)}=0\quad \text{for any compact subset} \ K\subset\big\{x: \ \tilde{S}(x)<h^{\frac{1}{q}}(x)\big\}.
\end{equation}
This proves \eqref{D6-2}.

By \eqref{D2}-\eqref{D3}, restricting to a subsequence if necessary, we may assume $\kappa\overset{\ast}{\rightharpoonup} \kappa^*\ \ \text{in } L^\infty(\Omega)$, and
$S\overset{\ast}{\rightharpoonup} S^*$ in  $L^\infty(\Omega)$
as $d_I\to 0$ for some $\kappa^*, S^*\in L^{\infty}(\Omega)$.
Multiplying the first equation of \eqref{1-2}  by $S$ and integrating over $\Omega$, we obtain $d_S\int_\Omega |\nabla S|^2+\int_\Omega S^2\le \int_\Omega \Lambda S+\int_\Omega \gamma SI$. By \eqref{D3} and \eqref{D6-ac}, there exists $C>0$ such that $\|S\|_{W^{1, 2}(\Omega)}\le C$ for all $0<d_I<1$. So restricting to a further subsequence if necessary, we may assume $S^*\in W^{1, 2}(\Omega)$ and $S\to S^*$ weakly in $W^{1, 2}(\Omega)$ as $d_I\to 0$.


We  claim that
\begin{equation}\label{D7}
    \kappa^*=d_SS^*\ \quad \text{a.e. in}\ \Omega.
\end{equation}
To see this, let $\varphi\in L^{\infty}(\Omega)$. By \eqref{D6-ac},
$$
d_{I}\int_{\Omega}|\varphi I|\le d_{I}\|\varphi\|_{L^{\infty}(\Omega)}\int_{\Omega}I\le \frac{d_{I}}{\eta_{\min}}\|\varphi\|_{L^{\infty}(\Omega)}\int_{\Omega}\Lambda\to 0, \quad \text{as}\ d_I\to 0.
$$
Taking $d_I\to 0$ in the equation
$$
\int_{\Omega}\kappa \varphi=d_S\int_{\Omega}S\varphi+d_{I}\int_{\Omega}I\varphi
$$
yields $\int_{\Omega}\kappa^*\varphi=d_S\int_{\Omega}S^*\varphi$. Letting $\varphi=\kappa^*-d_SS^*$, we obtain
$$\int_{\Omega}(\kappa^*-d_SS^*)^2=\int_{\Omega}(\kappa^*-d_SS^*)\varphi=\int_{\Omega}\kappa^*\varphi-d_S\int_{\Omega}S^*\varphi=0, $$
which proves \eqref{D7}.

By \eqref{D6-ac}, the Riesz representation theorem and the Banach-Alaoglu theorem, after passing to a subsequence if necessary, there is a finite  { Radon} measure $\mu$  such that $I\overset{\ast}{\rightharpoonup}\mu\ \ \text{in } [C(\bar\Omega)]^*$ as $d_I\to 0$. We then claim that
\begin{equation}\label{D8}
    \mu(\bar\Omega)=\int_{\Omega}d\mu>0.
\end{equation}
We proceed by contradiction to establish \eqref{D8}. Suppose for contradiction that $\mu(\bar\Omega)=0$.  By \eqref{D6-ab},
$$
\int_{\Omega}S+\int_{\Omega}\eta I=\int_{\Omega}\Lambda,\ \quad \forall\; d_I>0.
$$
Letting $d_I\to 0$ gives
\begin{equation*}
\int_{\Omega}S^*=\int_{\Omega}\Lambda,
\end{equation*}
which along with the fact $\int_{\Omega}\tilde{S}=\int_{\Omega}\Lambda$ yields
\begin{equation}\label{D9}
\int_{\Omega}S^*=\int_{\Omega}\tilde{S}.
\end{equation}
On the other hand, we have from \eqref{D3} that $S^*\le \tilde{S}$ a.e. in $\Omega$. It then follows from \eqref{D9} that $S^*=\tilde{S}$ a.e. in $\Omega$. Consequently, using \eqref{D3}, we can conclude that
\begin{equation}\label{D10}
    \lim_{d_I\to 0}\|\tilde{S}-S\|_{L^1(\Omega)}
    =\lim_{d_I\to 0}\Big(\int_{\Omega}\tilde{S}-\int_{\Omega}S\Big)=0.
\end{equation}
In view of the uniform boundedness of $S$ due to \eqref{D3}, the limit in \eqref{D10} holds in $L^\ell(\Omega)$ for any finite $\ell\ge 1$.  By the second equation of \eqref{1-2} and the variational characterization of the principal eigenvalue of elliptic equations, we have
$$
0\le d_{I}\int_{\Omega}|\nabla \varphi|^2-\int_{\Omega}[\beta S^q-(\eta+\gamma)]\varphi^2, \quad \forall\, \varphi\in W^{1,2}(\Omega), \ d_I>0.
$$
Taking $d_I\to 0$, we obtain that
\begin{equation}\label{D11}
\int_{\Omega}[\beta ({S}^{*})^{q}-(\eta+\gamma)]\varphi^2
\le 0,\ \quad \forall\, \varphi\in W^{1,2}(\Omega),
\end{equation}
which  implies that $S^*\leq h^{\frac{1}{q}}$ a.e. in $\Omega$, and so $\tilde S\leq h^{\frac{1}{q}}$ a.e. in $\Omega$. This contradicts the assumption that $\tilde{\Om}\ne\emptyset$. Therefore, \eqref{D8} holds.

Multiplying the first equation of \eqref{D1} by $\kappa$ and integrating on $\Omega$, we obtain
$$
d_S\int_{\Omega}|\nabla\kappa|^2+\int_{\Omega}\kappa^2=d_S\int_{\Omega}\Lambda \kappa-\int_{\Omega}(d_S\eta-d_I)\kappa I\le d_S^2\int_{\Omega}\Lambda \tilde{S},\quad \forall\; d_I\le d_S\eta_{\min},
$$
where we have used \eqref{D2} in the last step.
So passing to a further subsequence if necessary, we have that $\kappa\to\kappa^* $  weakly in $W^{1,2}(\Omega)$ as $d_I\to0$.

Let $\phi\in W^{1,2}(\Omega)\cap C(\bar\Omega)$. Multiplying the first equation of \eqref{D1} by $\phi$ and integrating over $\Omega$, we obtain
$$
-d_S\int_{\Omega}\nabla \kappa\cdot \nabla \varphi -\int_{\Omega}\kappa\varphi+d_S\int_{\Omega}\Lambda\varphi-d_S\int_{\Omega}\eta\varphi I+d_{I}\int_{\Omega}\varphi I=0.
$$
Letting $d_I\to 0$ and recalling \eqref{D7}, we obtain
\begin{equation*}
    -d_S\int_{\Omega}\nabla S^*\cdot \nabla \varphi -\int_{\Omega}S^*\varphi+\int_{\Omega}\Lambda \varphi -\int_{\Omega}\eta\varphi d\mu=0.
\end{equation*}
Here we used the fact of $I\overset{\ast}{\rightharpoonup}\mu\ \ \text{in } [C(\bar\Omega)]^*$ as $d_I\to 0$. Hence, \eqref{D6-3} holds.

Since $W^{1,2}(\Om)$ is compactly embedded in $L^2(\Om)$ and $S$ is uniformly bounded,  passing to a further subsequence if necessary, we have that $S\to S^*$ in $ L^{\ell}(\Om)$ for any $\ell\ge 1$. Therefore, \eqref{D11} holds and $S^*\le h^{\frac{1}{q}}$ a.e in $\Om$. It is clear from Lemma \ref{lemma_Sbound} that there is $m>0$ such that $S^*>m$ a.e in $\Om$. So \eqref{D6-3-3} holds.

Finally, by Proposition \ref{lemma_fg} in the appendix with $f=S$ and $g=I$, there exist measurable set $F_1$ with Lebesgue measure zero and closed sets $F_l$, $l\ge 2$, with $\Omega=\cup_{l\ge 1} F_l$ such that $S^*$ is continuous on $F_l$ for all $l\ge 2$ and $S^qI \overset{\ast}{\rightharpoonup} (S^*)^q\mu$ in $[C(\Omega')]^*$ with $\Omega':=\cup_{l\ge 2}F_l$. Integrating the equation of $I$ over $\Omega$ and using the fact that $F_1$ has Lebesgue measure zero lead to
\begin{eqnarray*}
\int_\Omega \beta (S^q-h)I=\int_{\Omega'} \beta (S^q-h)I=0.
\end{eqnarray*}
Taking $d_I\to 0$, we obtain
$$
\int_{\Omega'} \beta ([S^*]^q-h)d\mu=\int_{\{x\in\Omega':\ S^*(x)\neq h^{\frac{1}{q}}(x)\}} \beta ([S^*]^q-h)d\mu=0.
$$
This implies $\mu(\{x\in\Omega':\ S^*(x)\neq h^{\frac{1}{q}}(x)\})=0$. In view of $\mu(\bar\Omega)>0$, we have $\mu(\{x\in\Omega': \ S^*(x)= h^{\frac{1}{q}}(x)\}\cup F_1)>0$.
\end{proof}

\subsection{Small $d_S$ and $d_I$:\ proof of Theorem \ref{TH2.10}{\rm (i)}}
We now study the asymptotic profiles of the EE of \eqref{1-2} with $p=1$ when both $d_S$ and $d_I$ are small and prove Theorem \ref{TH2.10}(i).

\begin{proof}[Proof of Theorem \ref{TH2.10}{\rm (i)}] Notice that $\tilde S\to \Lambda$ uniformly on $\bar\Omega$ as $d_S\to 0$.  Similar to \cite[Lemma 2.3]{allen2008asymptotic}, we have
$$
\mathcal{R}_{0}\to\max_{\bar\Omega}\frac{\beta\Lambda^q}{\gamma+\eta}=\max_ {\bar\Omega}\frac{\Lambda^q}{h} \quad \text{as}\ (d_S, d_I)\to (0, 0).
$$
Therefore, by $\hat\Omega:=\{x\in\bar\Omega: \  \Lambda(x)>h^\frac{1}{q}(x)\}\neq\emptyset$ and Proposition \ref{theorem_ex}, \eqref{model-2}  has an EE $(S, I)$ for sufficiently small $d_S$ and $d_I$. Define
$$
u=\frac{d_SS+d_II}{d_S}=S+\frac{d_I}{d_S}I\ \
\mbox{ and}\ \ v=\frac{d_I}{d_S}I.
$$
Then, thanks to \eqref{1-2}, $(u,v)$ satisfies
\begin{equation}\label{C1}
\begin{cases}
\displaystyle d_S\Delta u +\Lambda -u+\big(1-\frac{d_S}{d_I}\eta\big)v=0,\ \ \ &x\in\Om,\cr
\displaystyle d_S\Delta v+\frac{d_S}{d_I}\beta[(u-v)^q-h]v=0, & x\in\Om,\cr
\partial_{\nu}u=\partial_{\nu}v=0, & x\in\partial\Om,\cr
0<v<u, & x\in\Om.
\end{cases}
\end{equation}
Note that if $\frac{d_S}{d_I}\eta<1$, then system \eqref{C1} is cooperative while it is a predator-prey system if $\frac{d_S}{d_I}\eta>1$.

We first derive a uniform upper bound for $u$. Let $x_0\in\bar{\Om}$ such that $u(x_0)=u_{\max}$. It follows from Lemma \ref{l3.1} and the first equation of \eqref{C1} that
\begin{equation}\label{C2}
0\le \Lambda(x_0)-u(x_0)+\Big[1-\frac{d_S}{d_I}\eta(x_0)\Big]v(x_0),
\end{equation}
which is equivalent to
$$
\frac{u(x_0)-v(x_0)}{\eta(x_0)}+\frac{d_S}{d_I}v(x_0)\le \frac{\Lambda(x_0)}{\eta(x_0)}\le \Big(\frac{\Lambda}{\eta}\Big)_{\max}.
$$
This together with the last inequality in \eqref{C1} gives
$$v
(x_0)\le \frac{d_I}{d_S}\Big(\frac{\Lambda}{\eta}\Big)_{\max}.
$$
As a consequence, we obtain from \eqref{C2} that
\begin{equation}\label{C3}
u(x)\le \Lambda_{\max}+\frac{d_I}{d_S}\Big(\frac{\Lambda}{\eta}\Big)_{\max},\ \ \ \forall \;x\in\bar\Omega.
\end{equation}
Noticing $0<v<u$, there exist  $u^*,v^*\in L^{\infty}(\Om)$ such that, restricting to a subsequence if necessary,
   \begin{equation}\label{C4}
       u\overset{\ast}{\rightharpoonup} u^*\ \ \text{in } L^\infty(\Omega),\quad  v\overset{\ast}{\rightharpoonup} v^*\ \ \text{in } L^\infty(\Omega),\quad \text{and} \quad 0\le v^*\le u^*\ \ \text{a.e.\ in}\ \Om,
   \end{equation}
 as $d_I+|\frac{d_I}{d_S}-\sigma|\to 0$.

Let $\varphi\in W^{1,2}(\Om)$. Multiplying the first equation of \eqref{C1} by $\varphi$ and integrating over $\Omega$, we obtain
$$
d_S\int_{\Om}u\Delta \varphi +\int_{\Om}\Big[\Lambda-u+\Big(1-\frac{d_S}{d_I}\eta\Big)v\Big]\varphi=0.
$$
Taking $d_I+|\frac{d_I}{d_S}-\sigma|\to 0$, we deduce
$$
\int_{\Om}\Big[\Lambda-u^*+\Big(1-\frac{\eta}{\sigma}\Big)v^*\Big]\varphi=0.
$$
Since $\varphi\in W^{1,2}(\Om)$ is arbitrary,
   \begin{equation}\label{C5}
       \Lambda -u^* +\Big(1-\frac{\eta}{\sigma}\Big)v^*=0 \quad \text{a.e. in}\ \Om.
   \end{equation}
Since $S=u-v$ and $I=\frac{d_S}{d_I}v$,   \eqref{TH9-eq-1} holds with $S^*=u^*-v^*$ and $I^*=\frac{1}{\sigma}v^*$. It is clear from Lemma \ref{lemma_Sbound} and  \eqref{TH9-eq-1} that
$$
\min\{\Lambda_{\min},\ r_{\min}^{\frac{1}{q}}\}\le S^*\le \Lambda_{\max} \ \ \ \mbox{a.e.\ in\ $\Om$.}
$$
Here, we have used the assumption $\hat{\Om}\not=\emptyset$ and $\Lambda_{\max}>r_{\max}^{\frac{1}{q}}$. Hence, \eqref{TH9-eq-2} holds.

Let $0<\varepsilon<\min\{\eta_{\min},\sigma\}$ be given. Set
$$\tilde{\Lambda}=\Lambda, \ \ \tilde{\eta}=\min\{\eta,\sigma\}-\varepsilon, \ \ \text{and} \ \  \tilde{\gamma}=\gamma+\eta-\tilde{\eta}.
$$
Then,
$$\sigma>\tilde{\eta}_{\max} \ \ \text{and} \ \ \tilde{h}:=\frac{\tilde{\gamma}+\tilde{\eta}}{\beta}=h.
$$
We observe that $(u,v)$ is a subsolution of \eqref{C9}. {Note also from Lemma \ref{lemma_Sbound} that $(u,v)\in C(\bar{\Om} : \mathcal{R}_*)$, where $ \mathcal{R}_*$ is defined by \eqref{R-rectangle-star}.  Since the orbit of every solution of \eqref{C9-1} with initial data in $C(\bar{\Om}:\mathcal{R}_*)$ is precompact, then there is a solution $(\tilde{u},\tilde{v})\in C(\bar{\Om}:\mathcal{R}_*)$ of \eqref{C9} such that $(u,v)\le (\tilde{u},\tilde{v})$.}  By Proposition \ref{prop-6.2} and the comparison principle for cooperative systems, we obtain
   $$
   \limsup_{d_I+|\frac{d_I}{d_S}-\sigma|\to0}(u,v)\le \Big(\min\{\Lambda,h^{\frac{1}{q}}\}+\frac{\sigma}{\tilde{\eta}}(\Lambda-h^{\frac{1}{q}})_+,\
   \frac{\sigma}{\tilde\eta}(\Lambda-h^{\frac{1}{q}})_+\Big) \quad \text{uniformly on}\ \bar\Om.
   $$
   Letting $\varepsilon\to 0$ and recalling that $S=u-v$ and $I=\frac{d_S}{d_I}v$, we deduce that \begin{equation*}
0\le    I^*\le \frac{1}{\min\{\sigma,\eta\}}(\Lambda-h^{\frac{1}{q}})_+\ \ \quad \text{a.e.\ in}\ \Om.
   \end{equation*}

To complete the proof of \eqref{TH9-eq-3}, it remains to show $\int_{\Om}I^*>0$, which is equivalent to $\int_{\Om}v^*>0$. Suppose by contradiction that $\int_{\Om}v^*=0$. Then $u^*=\Lambda$ a.e in $\Om$ by \eqref{TH9-eq-1}.  Multiplying \eqref{DFE_mass_bd} and the first equation of \eqref{C1} by $\tilde{S}-u$,  taking the difference and integrating over $\Omega$, we obtain
\begin{eqnarray*}
   \int_{\Om}(\tilde{S}-u)^2&=&-d_S\int_{\Om}|\nabla(\tilde{S}-u)|^2
   -\int_{\Om}\Big(1-\frac{d_S}{d_I}\eta\Big)v(\tilde{S}-u)\\
   &\le& \Big(1+\eta_{\max}\frac{d_S}{d_I}\Big)\|\tilde{S}-u\|_{L^\infty(\Omega)}\int_{\Om}v.
\end{eqnarray*}
Using $\int_{\Om}v\to\int_{\Om}v^*=0$ and \eqref{C3}, we obtain that $\|\tilde{S}-u\|_{L^2(\Om)}\to 0$ as $d_I+|\frac{d_I}{d_S}-\sigma|\to0$.  Since $\|\tilde{S}-\Lambda\|_{L^\infty(\Omega)}\to 0$ as $d_S\to0$,
we have that
$$
\mbox{$\|u-\Lambda\|_{L^2(\Om)}\to 0$\,\ \ as\ $d_I+|\frac{d_I}{d_S}-\sigma|\to0.$}
$$
As a result, we obtain from \eqref{C3} that $\|u-\Lambda\|_{L^\ell(\Om)}\to 0$ as  $d_I+|\frac{d_I}{d_S}-\sigma|\to0$ for any $\ell\in[1,\infty)$. By the uniform boundedness of $v$,  $\|v\|_{L^{\ell}(\Om)}\to 0$  as  $d_I+|\frac{d_I}{d_S}-\sigma|\to0$ for any $l\in[1,\infty)$. Consequently, we can conclude that
\begin{equation}\label{C1-0}
\mbox{$u-v\to \Lambda$\ \ in $L^{\ell}(\Om)$\,\ \ as\ $d_I+|\frac{d_I}{d_S}-\sigma|\to0$\ \ for any $\ell\in[1,\infty)$.}
\end{equation}
Consider the following eigenvalue problem:
\begin{equation}\label{C1-ei}
\begin{cases}
\displaystyle d_S\Delta \varphi+\frac{d_S}{d_I}\beta[(u-v)^q-h]\varphi=\lambda\varphi, & x\in\Om,\cr
\partial_{\nu}\varphi=0, & x\in\partial\Om.
\end{cases}
\end{equation}
Note that $v$ is a positive eigenfunction of \eqref{C1-ei} with principal eigenvalue being zero.
By the variational characterization of the principal eigenvalue, we know that
\begin{equation}\label{C6}
       \int_{\Om}\beta((u-v)^q-h)\psi^2\le d_I\int_{\Om}|\nabla \psi|^2, \quad \forall\, \psi\;\in W^{1,2}(\Om).
   \end{equation}
 Taking $d_I+|\frac{d_I}{d_S}-\sigma|\to 0$,  by \eqref{C1-0}, we have  $\int_{\Om}\beta(\Lambda^q-h)\psi^2\le 0$ for all $\psi\in W^{1,2}(\Om)$. Whence, we have $\Lambda\le h^{\frac{1}{q}}$ a.e. in $\Om$, which contradicts the assumption that $\tilde{\Om}\ne\emptyset$. Therefore, $\int_{\Om}I^*>0$.

If $q=1$,    we have $\int_{\Om}\beta(u^*-v^*-h)\varphi^2\le 0$ by taking $d_I+|\frac{d_I}{d_S}-\sigma|\to 0$ in \eqref{C6}. Hence,
   \begin{equation}\label{C7}
       u^*-v^*-h\leq0\  \quad \text{a.e.\ in}\ \Om.
   \end{equation}
Adding  \eqref{C5} and \eqref{C7} yields
   \begin{equation}\label{C8}
       I^*=\frac{1}{\sigma}v^*\ge \frac{1}{\eta}(\Lambda-h)_{+} \  \quad \text{a.e.\ in}\ \Om.
   \end{equation}
By \eqref{C7}, $S^*=u^*-v^*\le h$ a.e. in $\Om$, and so \eqref{TH9-eq-4} holds.

Finally,  suppose that $\eta\le \sigma$. Let $0<\varepsilon\ll 1$ be fixed. We rewrite \eqref{C1} as
\begin{equation}\label{C10-3}
\begin{cases}
\displaystyle d_S\Delta u +\Lambda-\varepsilon\frac{d_S}{d_I}v -u+\Big[1-\frac{d_S}{d_I}(\eta-\varepsilon)\Big]v=0,\ \ \ &x\in\Om,\cr
\displaystyle d_S\Delta v+\frac{d_S}{d_I}\beta\Big[(u-v)^q-\frac{((\gamma+\varepsilon)+(\eta-\varepsilon))}{\beta}\Big]v=0, & x\in\Om,\cr
\partial_{\nu}u=\partial_{\nu}v=0, & x\in\partial\Om,\cr
0<v<u, & x\in\Om.
\end{cases}
\end{equation}
In Proposition \ref{prop-6.2}, taking $\tilde{\Lambda}=\Lambda$, $\tilde{\gamma}=\gamma+\varepsilon$, $\tilde{\eta}=\eta-\varepsilon$  and $\tilde{h}=\frac{\tilde{\eta}+\tilde{\gamma}}{\beta}$,
then $\{x\in \Om : \tilde\Lambda(x)>\tilde{h}^{\frac{1}{q}}(x)\}$ is not empty and $\sigma>\tilde{\eta}_{\max}$. It is easy to see that $(u,v)$ is a subsolution of \eqref{C9}.
On the other hand, choose sufficiently large constant $C_0$ such that
$\frac{\Lambda\sigma}{\tilde\eta}<C_0$, $u<C_0+(\frac{1}{2}\tilde h_{\min})^{\frac{1}{q}}$ and $v<C_0$. Then,
$$
\Big(C_0+\Big(\frac{1}{2}\tilde h_{\min}\Big)^{\frac{1}{q}},\ C_0\Big)
$$
is a supersolution of \eqref{C9}. Since \eqref{C9} is a cooperative system, by the standard iteration argument of sub-super solutions, it  follows from Proposition \ref{prop-6.2} that
$$
\limsup_{d_I+|\frac{d_I}{d_S}-\sigma|\to 0}(u,v)\le \Big(\min\{\Lambda,\tilde{h}^{\frac{1}{q}}\}+\frac{\sigma}{\tilde{\eta}}(\Lambda-\tilde{h}^{\frac{1}{q}})_+,\ \frac{\sigma}{\tilde{\eta}}(\Lambda-\tilde{h}^{\frac{1}{q}})_+\Big)
$$
uniformly on $\bar\Omega$. Taking $\varepsilon\to 0$,  it holds that
       \begin{equation}\label{C10-4}
\limsup_{d_I+|\frac{d_I}{d_S}-\sigma|\to 0}(u,v)\le \Big(\min\{\Lambda,{h}^{\frac{1}{q}}\}+\frac{\sigma}{{\eta}}(\Lambda-{h}^{\frac{1}{q}})_+,\ \frac{\sigma}{{\eta}}(\Lambda-{h}^{\frac{1}{q}})_+\Big)
    \end{equation}
uniformly on $\bar\Omega$.

Next,  let
$$
\tilde{\Lambda}=\Lambda-\varepsilon(\sigma+1)
\Big[\Lambda_{\max}+(\sigma+1)\Big(\frac{\Lambda}{\eta}\Big)_{\max}\Big], \ \ \tilde{\eta}=\eta-\varepsilon\ \ \text{and}\ \ \tilde{\gamma}=\gamma+\varepsilon,
$$
where $0<\varepsilon\ll 1$.
Then $\{x\in\Om:\ \tilde{\Lambda}(x)>\tilde{h}^{\frac{1}{q}}(x)\}$ is not empty. By \eqref{C10-3}, $(u,v)$ is a supersolution of \eqref{C9}.
On the other hand, for any small $\epsilon_0>0$ satisfying $\epsilon_0<\min\{u,\ \tilde\Lambda\}$, $(\epsilon_0,0)$ is a subsolution of \eqref{C9}. As before, by the sub-super solution argument, we can deduce from Proposition \ref{prop-6.2} that
$$
\liminf_{d_I+|\frac{d_I}{d_S}-\sigma|\to 0}(u,v)\ge \Big(\min\{\tilde{\Lambda},\tilde{h}^{\frac{1}{q}}\}+\frac{\sigma}{\tilde{\eta}}(\Lambda-\tilde{h}^{\frac{1}{q}})_+,\ \frac{\sigma}{\tilde{\eta}}(\Lambda-\tilde{h}^{\frac{1}{q}})_+\Big) \quad \text{uniformly on}\ \bar\Om.
$$
Taking $\varepsilon\to 0$, we obtain
\begin{equation}\label{C10-5}
\liminf_{d_I+|\frac{d_I}{d_S}-\sigma|\to 0}(u,v)\ge \Big(\min\{\Lambda,{h}^{\frac{1}{q}}\}+\frac{\sigma}{{\eta}}(\Lambda-{h}^{\frac{1}{q}})_+,\ \frac{\sigma}{{\eta}}(\Lambda-{h}^{\frac{1}{q}})_+\Big)
\end{equation}
uniformly on $\bar\Omega$. Combining \eqref{C10-4}-\eqref{C10-5}, we have
$$
(u, v)\to\Big(\min\{\Lambda,{h}^{\frac{1}{q}}\}+\frac{\sigma}{{\eta}}(\Lambda-{h}^{\frac{1}{q}})_+,\ \frac{\sigma}{{\eta}}(\Lambda-{h}^{\frac{1}{q}})_+\Big)
$$
uniformly on $\bar\Omega$ as $d_S+|\frac{d_I}{d_S}-\sigma|\to0$. This proves Theorem \ref{TH2.10}(i).
\end{proof}

\vskip25pt
\section{Proofs of the main results: case of $0<p<1$} In this section, we assume that $0<p<1$ and prove Theorem \ref{TH2.8}(ii), Theorem \ref{TH2.9}(ii) and Theorem \ref{TH2.10}(ii).  By Proposition \ref{theorem_ex},  \eqref{model-2} has an EE $(S, I)$ for all $d_S, d_I>0$.

\subsection{Three useful lemmas} We first prepare some useful results.

\begin{lem}\label{Lem-5-1} Suppose that $0<p<1$ and $q>0$. Let $(S,I)$ be an EE of \eqref{model-2}. Then it holds that
\begin{equation}\nonumber
\Big[\Big(\frac{\beta}{\gamma+\eta}\Big)_{\min}S_{\min}^q\Big]^{\frac{1}{1-p}}\leq I_{\min}\leq I_{\max}\leq \Big[\Big(\frac{\beta}{\gamma+\eta}\Big)_{\max}S_{\max}^q\Big]^{\frac{1}{1-p}}.
\end{equation}
\end{lem}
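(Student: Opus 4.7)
The plan is to apply Lemma \ref{l3.1} directly to the equation satisfied by $I$ in \eqref{1-2}, exploiting that $I>0$ on $\bar\Omega$ (which follows from the maximum principle as noted just after \eqref{1-2}). Since the nonlinearity $\beta S^q I^p - (\gamma+\eta)I$ factors as $I\,[\beta S^q I^{p-1} - (\gamma+\eta)]$, we get a pointwise algebraic inequality at the extremum of $I$, and solving it for $I$ (using $0<p<1$ so that $I^{1-p}$ is an increasing function) yields both bounds.

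More concretely, pick $x_0, y_0\in\bar\Omega$ with $I(x_0)=I_{\max}$ and $I(y_0)=I_{\min}$. By Lemma \ref{l3.1}(i), $\Delta I(x_0)\le 0$, so from the second equation of \eqref{1-2},
\begin{equation*}
\beta(x_0)S^q(x_0)I^p(x_0)-(\gamma(x_0)+\eta(x_0))I(x_0)\ge 0.
\end{equation*}
Dividing by $I(x_0)>0$ and rearranging, since $0<p<1$,
\begin{equation*}
I_{\max}^{1-p}=I(x_0)^{1-p}\le \frac{\beta(x_0)S^q(x_0)}{\gamma(x_0)+\eta(x_0)}\le \Big(\frac{\beta}{\gamma+\eta}\Big)_{\max}S_{\max}^q,
\end{equation*}
which gives the upper bound after raising to the power $1/(1-p)$. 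The lower bound is symmetric: Lemma \ref{l3.1}(ii) gives $\Delta I(y_0)\ge 0$, hence
\begin{equation*}
\beta(y_0)S^q(y_0)I^p(y_0)-(\gamma(y_0)+\eta(y_0))I(y_0)\le 0,
\end{equation*}
and dividing by $I(y_0)>0$ and taking the $1/(1-p)$ power yields $I_{\min}\ge [(\beta/(\gamma+\eta))_{\min}\,S_{\min}^q]^{1/(1-p)}$.

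There is essentially no obstacle here; the only point worth verifying is that $I>0$ on all of $\bar\Omega$ (so that division by $I(x_0)$ and $I(y_0)$ is permitted), which is guaranteed for any EE by the strong maximum principle applied to the $I$-equation. The condition $0<p<1$ is used crucially so that $I\mapsto I^{1-p}$ is strictly increasing, making the inversion of the inequalities valid and orientation-preserving.
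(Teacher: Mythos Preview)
Your proof is correct and follows essentially the same approach as the paper's own proof: apply Lemma~\ref{l3.1} at the extrema of $I$, use the sign of $\Delta I$ in the $I$-equation of \eqref{1-2} to obtain a pointwise inequality, and solve for $I^{1-p}$. Your write-up makes a couple of steps (division by $I>0$, monotonicity of $t\mapsto t^{1-p}$) more explicit, but the argument is the same.
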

\begin{proof} Suppose that $I(x_0)=I_{\max}$ for some $x_0\in\bar\Omega$. Then
by Lemma \ref{l3.1}(i), $\De I(x_0)\leq0$. So it follows from the second equation of \eqref{1-2} that
$$
-\beta(x_0)S^q(x_0)I^p(x_0)+(\gamma(x_0)+\eta(x_0))I(x_0)\le 0.
$$
Thus, we have
$$
I_{\max}\leq \Big[\Big(\frac{\beta}{\gamma+\eta}\Big)_{\max}S_{\max}^q\Big]^{\frac{1}{1-p}}.
$$

Similarly,  suppose that $I(y_0)=I_{\min}$ for some $y_0\in\bar\Omega$. Then, $\De I(y_0)\geq0$ by Lemma \ref{l3.1}(ii). So
\begin{equation}\nonumber
-\beta(y_0)S^q(y_0)I^p(y_0)+(\gamma(y_0)+\eta(y_0))I(y_0)\ge 0,
\end{equation}
which implies
$$
\Big[\Big(\frac{\beta}{\gamma+\eta}\Big)_{\min}S_{\min}^q\Big]^{\frac{1}{1-p}}\leq I_{\min}.
$$
\end{proof}

\begin{lem}\label{Lem-5-2} Suppose that $0<p<1$ and $q>0$.  Let $(S,I)$ be any EE of \eqref{1-2}. Then it holds that
\begin{equation}\label{1-1b}
S_{\max}\leq \left(1+\frac{d_I}{d_S\eta_{\min}}\right)\Lambda_{\max}
\end{equation}
and
\begin{equation}\label{1-1c}
I_{\max}\leq \left(\frac{d_S}{d_I}+\frac{1}{\eta_{\min}}\right)\Lambda_{\max}.
\end{equation}
\end{lem}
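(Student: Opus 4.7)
The plan is to reduce \eqref{1-2} to a single scalar elliptic equation for the linear combination $u := S + \frac{d_I}{d_S} I$, echoing the substitution already used in the proof of Theorem \ref{TH2.10}(i). Adding the first equation of \eqref{1-2} to $\frac{d_I}{d_S}$ times the second equation makes the nonlinear terms $\pm\beta S^q I^p$ and the recovery fluxes $\pm\gamma I$ cancel, and multiplying through by $d_S$ yields
\[
 d_S\Delta u + \Lambda - u + \left(\frac{d_I}{d_S} - \eta\right) I = 0 \quad \text{in } \Omega, \qquad \partial_\nu u = 0 \quad \text{on } \partial\Omega.
\]
This reduction is the key observation; everything afterwards is a pointwise application of the Neumann maximum principle of Lemma \ref{l3.1}.

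Next, I will pick $x_0 \in \bar\Omega$ with $u(x_0) = u_{\max}$ and invoke Lemma \ref{l3.1}(i) to get $\Delta u(x_0) \le 0$. Substituting this into the displayed equation at $x_0$ and expanding $u(x_0) = S(x_0) + \frac{d_I}{d_S} I(x_0)$ causes the $\frac{d_I}{d_S} I(x_0)$ terms to cancel exactly, which leaves the clean pointwise bound
\[
 S(x_0) + \eta(x_0) I(x_0) \le \Lambda(x_0) \le \Lambda_{\max}.
\]
Since $S(x_0), I(x_0) > 0$, this inequality separately yields $S(x_0) \le \Lambda_{\max}$ and $I(x_0) \le \Lambda_{\max}/\eta(x_0) \le \Lambda_{\max}/\eta_{\min}$. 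Plugging both back into $u(x_0)$ then gives
\[
 u_{\max} \le \Lambda_{\max} + \frac{d_I}{d_S}\cdot\frac{\Lambda_{\max}}{\eta_{\min}} = \left(1 + \frac{d_I}{d_S\eta_{\min}}\right)\Lambda_{\max}.
\]

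Finally, because $S$ and $I$ are positive, the pointwise inequalities $S \le u$ and $\frac{d_I}{d_S} I \le u$ hold on $\bar\Omega$, so taking suprema gives $S_{\max} \le u_{\max}$, which is \eqref{1-1b}, and $I_{\max} \le \frac{d_S}{d_I} u_{\max}$, which, once the bound for $u_{\max}$ is substituted, collapses to $\left(\frac{d_S}{d_I} + \frac{1}{\eta_{\min}}\right)\Lambda_{\max}$, giving \eqref{1-1c}. There is no real obstacle here: the entire argument rests on the cancellation produced by the weighted sum $u$, and that cancellation is inherited directly from the calculation already performed in the proof of Theorem \ref{TH2.10}(i); the hypothesis $0 < p < 1$ is not actually used in this step (it was needed earlier to guarantee existence of the EE via Proposition \ref{theorem_ex}).
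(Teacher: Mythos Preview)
Your proof is correct and matches the paper's almost exactly: the paper works with $w = d_S S + d_I I = d_S\, u$, applies Lemma \ref{l3.1} at the maximum of $w$ to obtain $S(x_0) + \eta(x_0)I(x_0) \le \Lambda(x_0)$, and then deduces the two bounds just as you do. One cosmetic slip: your displayed equation for $u$ is obtained by simply adding the two equations of \eqref{1-2} (not by adding the first to $\frac{d_I}{d_S}$ times the second---that combination would not make the nonlinear terms cancel), but the equation you wrote down is correct.
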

\begin{proof} Let $w=d_SS+d_II$. Then $w$ solves
\begin{equation}\label{w_bd}
\begin{cases}
\De w+\Lambda-S-\eta I=0,\ \ \ & x\in\Omega,\cr
\partial_{\nu}w=0, & x\in\p\Omega.
\end{cases}
\end{equation}

Suppose that $w(x_0)=w_{\max}$ for $x_0\in\bar\Omega$. Then $\De w(x_0)\leq0$. So by \eqref{w_bd}, we have
$$\Lambda(x_0)-S(x_0)-\eta(x_0) I(x_0)\geq0.$$
This shows
$$
S(x_0)\leq\Lambda_{\max}\ \ \text{and}\ \ I(x_0)\leq\frac{\Lambda_{\max}}{\eta_{\min}}.
$$
As a result, we have
$$
d_SS_{\max}\leq w(x_0)=d_SS(x_0)+d_II(x_0)\leq d_S\Lambda_{\max}+d_I\frac{\Lambda_{\max}}{\eta_{\min}}
$$
and
$$
d_II_{\max}\leq w(x_0)=d_SS(x_0)+d_II(x_0)\leq d_S\Lambda_{\max}+d_I\frac{\Lambda_{\max}}{\eta_{\min}},
$$
which  imply the desired results.
\end{proof}

\begin{lem}\label{Lem-5-1d} Suppose that $0<p<1$ and $q>0$.   Let $c_0$ denote the unique positive solution of the algebraic equation
$$
 \frac{\Lambda_{\min}}{1+\left(\frac{d_S}{d_I}+\frac{1}{\eta_{\min}}\right)^p\Lambda^p_{\max}\beta_{\max}}=c+c^q.
$$
Then any EE $(S,I)$ of \eqref{model-2} satisfies
\begin{equation}\label{eq:lem-5-1d}
S_{\min}\ge c_0\quad \text{and}\quad
I_{\min}\ge\left[\Big(\frac{\beta}{\eta+\gamma}\Big)_{\min}c_0^q\right]^\frac{1}{1-p}.
\end{equation}

\end{lem}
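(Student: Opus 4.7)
The plan is to apply Lemma \ref{l3.1} to $S$ at its minimum point in order to convert the elliptic system \eqref{1-2} into a scalar algebraic inequality for $S_{\min}$, and then relate this inequality to the defining equation of $c_0$ via strict monotonicity of $c \mapsto c+c^q$. The bound on $I_{\min}$ will fall out immediately from Lemma \ref{Lem-5-1}.

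Concretely, let $y_0 \in \bar\Omega$ satisfy $S(y_0) = S_{\min}$. Lemma \ref{l3.1}(ii) gives $\Delta S(y_0) \ge 0$, so the first equation of \eqref{1-2}, combined with $I, \gamma > 0$, yields
$$ S_{\min} + \beta(y_0)\, S_{\min}^q\, I^p(y_0) \;\ge\; \Lambda(y_0) + \gamma(y_0) I(y_0) \;\ge\; \Lambda_{\min}. $$
Bounding $\beta(y_0) \le \beta_{\max}$ and $I^p(y_0) \le I_{\max}^p$, and then substituting the bound $I_{\max} \le \bigl(\frac{d_S}{d_I} + \frac{1}{\eta_{\min}}\bigr)\Lambda_{\max}$ from Lemma \ref{Lem-5-2}, I obtain
$$ S_{\min} + A\, S_{\min}^q \;\ge\; \Lambda_{\min}, \qquad A := \beta_{\max}\, \Lambda_{\max}^p\, \Bigl(\tfrac{d_S}{d_I} + \tfrac{1}{\eta_{\min}}\Bigr)^{\!p}. $$

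The function $f(c) := c + c^q$ is continuous and strictly increasing on $(0,\infty)$ with range $(0,\infty)$ (since $f'(c) = 1 + q c^{q-1} > 0$ for $q > 0$), so $c_0 = f^{-1}\bigl(\Lambda_{\min}/(1+A)\bigr)$ is well defined. The elementary identity
$$(1+A)(c + c^q) - (c + A c^q) = c^q + Ac \;\ge\; 0$$
then allows me to write
$$ (1+A)\bigl(S_{\min} + S_{\min}^q\bigr) \;\ge\; S_{\min} + A\, S_{\min}^q \;\ge\; \Lambda_{\min}, $$
so $f(S_{\min}) \ge f(c_0)$, and strict monotonicity of $f$ gives $S_{\min} \ge c_0$. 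For the second inequality in \eqref{eq:lem-5-1d}, Lemma \ref{Lem-5-1} already provides $I_{\min} \ge \bigl[(\beta/(\gamma+\eta))_{\min}\, S_{\min}^q\bigr]^{1/(1-p)}$, and inserting $S_{\min} \ge c_0$ yields the claimed bound.

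The only genuinely non-routine step is the passage from the nonlinear inequality $S_{\min} + A S_{\min}^q \ge \Lambda_{\min}$ to the defining equation $c + c^q = \Lambda_{\min}/(1+A)$ of $c_0$: the former cannot be inverted in closed form for generic $q > 0$, so some convexity-type trick is needed to express the lower bound in terms of the single scalar $c_0$. The one-line inequality $c + A c^q \le (1+A)(c + c^q)$ plays exactly that role and is the key, though quite elementary, observation in the argument.
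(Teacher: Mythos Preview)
Your proof is correct and follows essentially the same route as the paper: minimum-point analysis via Lemma~\ref{l3.1}(ii), the upper bound on $I_{\max}$ from Lemma~\ref{Lem-5-2}, the elementary inequality $c+Ac^q\le (1+A)(c+c^q)$, and finally Lemma~\ref{Lem-5-1} for $I_{\min}$. The paper phrases the key step as $\Lambda_{\min}\le (1+\beta_{\max}I_{\max}^p)\bigl[S(y_0)+S^q(y_0)\bigr]$ directly, which is exactly your inequality with $A=\beta_{\max}I_{\max}^p$; no convexity is really involved, just $c\le c+c^q$ and $c^q\le c+c^q$.
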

\begin{proof} Suppose that $S(y_0)=S_{\min}$ for $y_0\in\bar\Omega$. As in Lemma \ref{Lem-5-1}, $\Delta S(y_0)\ge 0$ and
$$
\Lambda(y_0)+\gamma(y_0)I(y_0)\le S(y_0)+\beta(y_0)S^q(y_0)I^p(y_0).
$$
It then follows that
$$
\Lambda_{\min}\le (1+\beta_{\max}I_{\max}^p)[S(y_0)+S^q(y_0)].
$$
Hence, in view of Lemma \ref{Lem-5-2}, we have that
$$
\frac{\Lambda_{\min}}{1+\left(\frac{d_S}{d_I}+\frac{1}{\eta_{\min}}\right)^p\Lambda^p_{\max}\beta_{\max}}\le S_{\min}+S_{\min}^q.
$$
Since the function $[0,\infty)\ni c\mapsto c+c^q$ is strictly increasing, $S_{\min}\ge c_0$. The second inequality in \eqref{eq:lem-5-1d} follows from Lemma \ref{Lem-5-1}.
\end{proof}

\subsection{Small $d_I$:\ proof of Theorem \ref{TH2.8}{\rm (ii)}}
We are now in a position to prove Theorem \ref{TH2.8}{\rm (ii)}.

\begin{proof}[Proof of Theorem \ref{TH2.8}{\rm (ii)}] By means of \eqref{1-1b} in Lemma \ref{Lem-5-2},
there exists $C_0>0$ such that $S(x)\leq C_0$ for all $x\in\bar{\Omega}$ and
 $0<d_I\leq1$. Then by Lemma \ref{Lem-5-1}, there exists $C_1>0$ such that
$I(x)\leq C_1$ for all $x\in\bar{\Omega}$ and $0<d_I\leq1$.
By  the $S$-equation in  \eqref{1-2}  and the standard elliptic estimates (\cite{GT}), restricting to a subsequence if necessary, $S\to S_*$ in $C^1(\bar\Omega)$ as $d_I\to0$ for some $S_*\geq0$.

We rewrite the second equation in \eqref{1-2} as
\begin{equation}\nonumber
    d_I\De I+[\beta S^q-(\gamma+\eta)I^{1-p}]I^p=0, \ \ x\in\Omega;\ \ \
        \partial_{\nu}I=0,\ \ x\in\p\Omega.
\end{equation}
Since $S\to S_*$ in $C^1(\bar\Omega)$,  a standard singular perturbation argument shows that $I\to I_*$ uniformly on $\bar\Omega$ as $d_I\to0$, where $I_*$ is given by \eqref{S-lim-1}. It is not hard to see that $S_*$ is the unique nonnegative classical solution of \eqref{S-lim-2}. The maximum principle for elliptic equations implies that $S_*>0$ on $\bar\Omega$, and so $I_*>0$ on $\bar\Omega$ by \eqref{S-lim-1}. The uniqueness of $S_*$ implies that the convergence is independent of the chosen subsequence.
\end{proof}

\subsection{Small $d_S$:\ proof of Theorem \ref{TH2.9}{\rm (ii)}}
In this subsection, we  prove Theorem \ref{TH2.9}{\rm (ii)}.

\begin{proof}[Proof of Theorem \ref{TH2.9}{\rm (ii)}] By\eqref{1-1c}  of Lemma \ref{Lem-5-2}, there exists $C_0>0$ such that $I\leq C_0$ on $\bar\Omega$ for all $0<d_S\leq1$. Suppose that $S(x_0)=S_{\max}$ for some $x_0\in\bar\Omega$. Then $\De S(x_0)\leq0$, and the $S$-equation in  \eqref{1-2} gives
$$
S(x)\leq S(x_0)\leq\Lambda(x_0)+\gamma(x_0) I(x_0)\leq \Lambda_{\max}+\gamma_{\max}C_0:=C_1,\ \  \ \forall x\in\bar\Omega
$$
for all $0<d_S\leq1$. By a standard compactness argument applied to the $I$-equation in  \eqref{1-2}, restricting to a sequence if necessary,
$I\to I_*$ in $C^1(\bar\Omega)$ as $d_S\to0$ for some $I_*\geq0$.

We now claim that $I_*>0$ on $\bar\Omega$. Suppose to the contrary that $I_*(y_0)=0$ for some $y_0\in\bar\Omega$. Since $I\to I_*$ in $C^1(\bar\Omega)$,  $I_{\min}\to0$ as $d_S\to0$. This and Lemma \ref{Lem-5-1} yield $S_{\min}\to0$ as $d_S\to0$. Suppose that $S(z_0)=S_{\min}$ for some $z_0\in\bar\Omega$. We infer from the $S$-equation in  \eqref{1-2} that
$$
0\leftarrow S(z_0)+\beta S^q(z_0)I^p(z_0)\geq\Lambda(z_0)+\gamma(z_0) I(z_0)\geq \Lambda_{\min}>0\ \ \mbox{as}\ d_S\to0,
$$
which is a contradiction. Hence, $I_*>0$ on $\bar\Omega$.

Since $I_*>0$ on $\bar\Omega$,  a standard singular perturbation argument  applied to the $S$-equation in  \eqref{1-2} allows us to conclude that $S\to S_*$ uniformly on
$\bar\Omega$, where $S_*$ satisfies
$$
\Lambda- S_*-\beta S_*^q I_*^p+\gamma I_*=0,\ \ x\in\bar\Omega.
$$
It is easy to see that $I_*$ solves \eqref{I-lim}. The proof is  complete.
\end{proof}

\subsection{Small $d_S$ and $d_I$:\ proof of Theorem \ref{TH2.10}{\rm (ii)}}
Finally, we consider the case where both $d_S$ and $d_I$ are small. As a first step, we need the following result.

 \begin{lem}\label{l3.2} Suppose that $0<p<1$ and $q>0$.  Let $\sigma>\eta_{\max}$ be fixed.
 \begin{itemize}
     \item[\rm (i)]  Let $\underline{u}_0=\Lambda$  and $\underline{v}_0=0$. Define a sequence of nonnegative functions $\{(\underline{u}_n,\underline{v}_n)\}$ inductively as follows:\ if $(\underline{u}_n,\underline{v}_n)$ is defined, let  $\underline{v}_{n+1}$ denote the unique  positive solution of
     $$\underline{u}_n=\underline{v}_{n+1}+h^{\frac{1}{q}}\left(\frac{\underline{v}_{n+1}}{\sigma}\right)^{\frac{1-p}{q}}
     $$
     and $\underline{u}_{n+1}=\Lambda+(1-\frac{\eta}{\sigma})\underline{v}_{n+1}$.  Then $\{(\underline{u}_n,\underline{v}_{n})\}_{n\ge 1}$ is increasing and converges to $(u^*,v^*)$ uniformly on $\bar{\Om}$, where $v^*$ is the unique positive solution of
     \begin{equation}\label{v-limit-equ}
         \Lambda=\frac{\eta}{\sigma}v^*+h^{\frac{1}{q}}\left(\frac{v^*}{\sigma}\right)^{\frac{1-p}{q}}
     \end{equation}
     and $u^*=\Lambda+(1-\frac{\eta}{\sigma})v^*$.

     \item[\rm (ii)] Let $\overline{u}_0=\overline{v}_0=\Lambda_{\max}+(1+\sigma)(\frac{\Lambda}{\eta})_{\max}$. Define a sequence of positive functions $\{(\overline{u}_n,\overline{v}_n)\}$ inductively:\ if $(\overline{u}_n,\overline{v}_n)$ is defined, let  $\overline{u}_{n+1}=\Lambda+(1-\frac{\eta}{\sigma})\overline{v}_n$  and $\overline{v}_{n+1}$ be the unique  positive solution of
     $$
     \overline{u}_{n+1}=\overline{v}_{n+1}+h^{\frac{1}{q}}\left(\frac{\overline{v}_{n+1}}{\sigma}\right)^{\frac{1-p}{q}}.
     $$
     Then $\{(\overline{u}_n,\overline{v}_{n})\}_{n\ge 1}$ is decreasing and converges to $(u^*,v^*)$ uniformly on $\bar{\Om}$, where $(u^*,v^*)$ is the same as in ${\rm(i)}$.
 \end{itemize}

 \end{lem}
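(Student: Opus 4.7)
The plan is to reformulate each iteration as the orbit of a pointwise monotone map whose unique positive fixed point is $v^*$, and to upgrade pointwise monotone convergence to uniform convergence via Dini's theorem. The key auxiliary construction is the implicit function $\Psi(u,x)$ defined for $u\ge 0$ and $x\in\bar\Omega$ by $u=\Psi(u,x)+h^{1/q}(x)\bigl(\Psi(u,x)/\sigma\bigr)^{(1-p)/q}$. Since $0<p<1$, the right-hand side is, for each fixed $x$, strictly increasing and continuous in $\Psi\in[0,\infty)$ with range $[0,\infty)$; hence $\Psi$ is well defined, jointly continuous in $(u,x)$, strictly increasing in $u$, with $\Psi(0,x)=0$. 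The standing assumption $\sigma>\eta_{\max}$ ensures $1-\eta(x)/\sigma>0$ on $\bar\Omega$.

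For (i), the iteration reads $\underline{v}_{n+1}=\Psi(\underline{u}_n,\cdot)$ and $\underline{u}_{n+1}=T(\underline{u}_n)$, where $T(u):=\Lambda+(1-\eta/\sigma)\Psi(u,\cdot)$ is monotone increasing on $C(\bar\Omega)$. Because $\underline{u}_1=T(\Lambda)\ge\Lambda=\underline{u}_0$, iterating $T$ yields an increasing sequence. Separately, the pointwise equation \eqref{v-limit-equ} has a unique positive solution $v^*(x)$ for each $x$ by the same strict-monotonicity argument, and $u^*:=\Lambda+(1-\eta/\sigma)v^*$ satisfies $T(u^*)=u^*$ (indeed $\Psi(u^*,\cdot)=v^*$ by substitution); an induction then gives $\underline{u}_n\le u^*$ and $\underline{v}_n\le v^*$ for every $n$. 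Thus $\underline{u}_n\uparrow \hat u\le u^*$ and $\underline{v}_n\uparrow \hat v\le v^*$ pointwise; passing to the limit using continuity of $\Psi$ shows that $\hat v$ is a positive pointwise solution of \eqref{v-limit-equ} (positivity follows from $\hat v\ge\underline{v}_1=\Psi(\Lambda,\cdot)>0$), whence $\hat v=v^*$ and $\hat u=u^*$ by uniqueness.

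For (ii), the iteration reads $\overline{v}_{n+1}=\tilde{T}(\overline{v}_n)$ with $\tilde{T}(v):=\Psi(\Lambda+(1-\eta/\sigma)v,\cdot)$, again monotone increasing. The critical initial step is $\tilde{T}(\overline{v}_0)\le\overline{v}_0$; after inverting $\Psi$ this is equivalent to $\Lambda\le (\eta/\sigma)\overline{v}_0+h^{1/q}(\overline{v}_0/\sigma)^{(1-p)/q}$, and is implied by $\Lambda(x)\le (\eta(x)/\sigma)\overline{v}_0$. The choice $\overline{v}_0=\Lambda_{\max}+(1+\sigma)(\Lambda/\eta)_{\max}$ secures this inequality since $\eta(x)\overline{v}_0/\sigma\ge ((1+\sigma)/\sigma)\Lambda(x)\ge\Lambda(x)$. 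Hence $\overline{v}_n\downarrow \bar v\ge 0$ pointwise with $\bar v$ a fixed point of $\tilde{T}$; since $\tilde{T}(0)=\Psi(\Lambda,\cdot)>0$ the limit cannot be identically zero, and pointwise uniqueness of positive solutions of \eqref{v-limit-equ} forces $\bar v=v^*$ and $\overline{u}_n\to u^*$.

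Finally, since $h$, $\eta$, $\Lambda$ are continuous on $\bar\Omega$ and $\Psi$ is jointly continuous, both $v^*$ and $u^*$ are continuous functions on the compact set $\bar\Omega$, so the pointwise monotone convergence in (i) and (ii) upgrades to uniform convergence by Dini's theorem. The main obstacle is verifying the correct direction of the initial inequality driving the monotonicity of the iteration in (ii); once that explicit computation is in hand, all remaining steps follow mechanically from the strict monotonicity of $\Psi$ and the continuity of the data.
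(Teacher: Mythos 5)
Your proof is correct and follows essentially the same route as the paper: monotonicity of the iterates by induction using the strict monotonicity of $c\mapsto c+h^{1/q}(c/\sigma)^{(1-p)/q}$, a uniform bound, pointwise convergence to the unique solution of the algebraic system, and Dini's theorem for uniformity. The only cosmetic differences are that you package the inverse map as an explicit implicit function $\Psi$ and bound the increasing sequence in (i) by the fixed point $u^*$ rather than by the paper's explicit constant $\sigma(\Lambda/\eta)_{\max}$.
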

 \begin{proof}{\rm (i)} We proceed by induction  to show the monotonicity of $\{(\underline{u}_n,\underline{v}_{n})\}$. Since $\underline{u}_0=\Lambda>0$, then $\underline{v}_1>0=\underline{v}_0$. This in turn yields $\underline{u}_1=\Lambda+(1-\frac{\eta}{\sigma})\underline{v}_1>\Lambda=\underline{u}_0$. Suppose that the monotonicity holds up to $n\ge 1$. Then,
 $$
 \underline{v}_n+h^{\frac{1}{q}}\left(\frac{\underline{v}_n}{\sigma}\right)^{\frac{1-p}{q}}
 =\underline{u}_{n-1}<\underline{u}_n=\underline{v}_{n+1}+h^{\frac{1}{q}}
 \left(\frac{\underline{v}_{n+1}}{\sigma}\right)^{\frac{1-p}{q}}.
 $$
 Since the mapping $[0,\infty)\ni c\mapsto c+h^{\frac{1}{q}}\left(\frac{c}{\sigma}\right)^{\frac{1-p}{q}}$ is strictly increasing, we have $\underline{v}_{n}<\underline{v}_{n+1}$. This in turn implies that $$
 \underline{u}_{n+1}=\Lambda+\big(1-\frac{\eta}{\sigma}\big)\underline{v}_{n+1}
 >\Lambda+\big(1-\frac{\eta}{\sigma}\big)\underline{v}_n=\underline{u}_{n}.
 $$
 Therefore by induction, $\{(\underline{u}_{n},\underline{v}_n)\}$ is increasing.

 Next, we show that this sequence is bounded. Indeed, observe from its monotonicity that
 \begin{eqnarray*}
 \underline{u}_{n+1}=\Lambda +\big(1-\frac{\eta}{\sigma}\big)\underline{v}_{n+1}&=&\Lambda +\big(1-\frac{\eta}{\sigma}\big)\Big[\underline{u}_n-h^{\frac{1}{q}}
 \left(\frac{\underline{v}_{n+1}}{\sigma}\right)^{\frac{1-p}{q}}\Big]\\
 &<&\Lambda +(1-\frac{\eta}{\sigma})\underline{u}_{n+1},\quad \forall\; n\ge 0.
 \end{eqnarray*}
Thus, $\underline{u}_{n+1}< \sigma\Big(\frac{\Lambda}{\eta}\Big)_{\max}$ for every $n\ge 0$. This implies
$$
\underline{v}_{n+1}=\underline{u}_n-h^{\frac{1}{q}}
\left(\frac{\underline{v}_{n+1}}{\sigma}\right)^{\frac{1-p}{q}}<\underline{u}_n
<\sigma\Big(\frac{\Lambda}{\eta}\Big)_{\max},\quad \forall\; n\ge 0,
$$
which shows that $\{(\underline{u}_{n},\underline{v}_n)\}$ is  bounded.

Since $\{(\underline{u}_{n},\underline{v}_n)\}$ is monotone and bounded, it converges pointwise to a pair of bounded functions $(\underline{u}_{\infty},\underline{v}_{\infty})$ on $\bar{\Omega}$, which satisfy
 $$
 \underline{u}_{\infty}=\underline{v}_\infty+h^{\frac{1}{q}}\left(\frac{\underline{v}_{\infty}}{\sigma}\right)^{\frac{1-p}{q}}\quad \text{and}\quad  \underline{u}_{\infty}=\Lambda+\big(1-\frac{\eta}{\sigma}\big)\underline{v}_{\infty}.
 $$
 This shows that $\underline{v}_{\infty}$ is a positive solution of the algebraic equation \eqref{v-limit-equ}. Therefore, we must have $\underline{v}_{\infty}=v^*$ and $\underline{u}_{\infty}={u}^*$. Since $({u}^*,v^*)$ is continuous on $\bar{\Omega}$, we conclude from the Dini's theorem that $(\underline{u}_n,\underline{v}_n)\to({u}^*,v^*)$ uniformly on $\bar\Omega$ as $n\to\infty$.

 \quad {\rm (ii)} Similar to the arguments in {\rm (i)}, it suffices to show that $\{(\overline{u}_n,\overline{v}_n)\}$ is decreasing. We proceed by induction. Observe that
 $$
 \overline{u}_1-\overline{u}_0=\Lambda-\frac{\eta}{\sigma}\overline{u}_0
 \leq\Big[\frac{\Lambda}{\eta}-\frac{1+\sigma}{\sigma}\Big(\frac{\Lambda}{\eta}\Big)_{\max}\Big]\eta
 <0
 $$
 and
 $$
 \overline{v}_1=\overline{u}_1-h^{\frac{1}{q}}
 \left(\frac{\overline{v}_1}{\sigma}\right)^{\frac{1-p}{q}}<\overline{u}_0=\overline{v}_0.
 $$
 Suppose that the monotonicity of $\{(\overline{u}_n,\overline{v}_n)\}$ holds up to some $n\ge 1$. Then we have
 $$
 \overline{u}_{n+1}=\Lambda+\big(1-\frac{\eta}{\sigma}\big)\overline{v}_n
 <\Lambda+\big(1-\frac{\eta}{\sigma}\big)\overline{v}_{n-1}=\overline{u}_n.
 $$
 Since the mapping $[0,\infty)\ni c\mapsto c+\left(\frac{c}{\sigma}\right)^{\frac{1-p}{q}}$ is strictly increasing and
 $$
 \left[\overline{v}_{n}+h^{\frac{1}{q}}\left(\frac{\overline{v}_n}{\sigma}\right)^{\frac{1-p}{q}}\right]
 -\left[\overline{v}_{n+1}+h^{\frac{1}{q}}\left(\frac{\overline{v}_{n+1}}{\sigma}\right)^{\frac{1-p}{q}}\right]=\overline{u}_{n}-\overline{u}_{n+1}>0,
 $$
 we  obtain  $\overline{v}_n>\overline{v}_{n+1}$. Therefore,  $\{(\overline{u}_n,\overline{v}_n)\}_{n\ge 0}$ is decreasing.
 \end{proof}

Now we are ready to establish Theorem \ref{TH2.10}(ii).

\begin{proof}[Proof of Theorem \ref{TH2.10}{\rm (ii)}] By Lemmas \ref{Lem-5-2}-\ref{Lem-5-1d}, restricting to a subsequence if necessary, we have $S\to S^*$ and $I\to I^*$ weakly-star in $L^{\infty}(\Omega)$ for some  $S^*,\,I^*\in L^{\infty}(\Omega)$ satisfying  \eqref{model-di-ds-eq-2}-\eqref{model-di-ds-eq-3} as $d_{I}+|\frac{d_{I}}{d_{S}}-\sigma|\to0$.
Let $\kappa=d_{S}S+d_{I}I$. Then $\kappa$ solves
\begin{equation}\nonumber
    \De \kappa+\Lambda-S-\eta I=0, \ \ x\in\Omega;\ \ \
        \partial_{\nu}\kappa=0,  \ \ x\in\p\Omega.
\end{equation}
Clearly, $\kappa\to \Lambda$ uniformly on $\bar\Omega$ as $d_{I}+|\frac{d_{I}}{d_{S}}-\sigma|\to0$. By a test function argument, it is easily seen that \eqref{model-di-ds-eq-1} holds. Clearly, \eqref{model-di-ds-eq-2}-\eqref{model-di-ds-eq-3} follow from \eqref{model-di-ds-eq-1} and Lemma \ref{Lem-5-1d}.

Finally, suppose that $\sigma>\eta_{\max}$. Let $u=S+\frac{d_I}{d_S}I$ and $v=\frac{d_I}{d_S}I$. Then $(u,v)$ satisfies
\begin{equation}\label{C1-p2}
        \begin{cases}
\displaystyle            d_S\Delta u +\Lambda -u+\left(1-\frac{d_S}{d_I}\eta\right)v=0,\ \ \ &x\in\Om,\cr
\displaystyle            d_S\Delta v+\left(\frac{d_S}{d_I}\right)^{p}\beta\Big[(u-v)^q-h\left(\frac{d_S}{d_I}v\right)^{1-p}\Big]v^p=0, & x\in\Om,\cr
            \partial_{\nu}u=\partial_{\nu}v=0, & x\in\partial\Om,\cr
            0<v<u, & x\in\Om.
        \end{cases}
    \end{equation}
Since  $\sigma>\eta_{\max}$, $\frac{d_S}{d_I}\eta<1$ and \eqref{C1-p2} is cooperative if $d_{I}+|\frac{d_I}{d_S}-\sigma|$ is small. Moreover,  $d_S\Delta u +\Lambda -u<0$ and ${u}\ge{\tilde{S}}$  by the comparison principle if $d_{I}+|\frac{d_I}{d_S}-\sigma|$ is small.

 Since ${\tilde{S}}\to{\Lambda}=\underline{u}_0$ uniformly on $\bar\Om$ as $d_S\to0$,
it  follows from the first equation of \eqref{C1-p2} that
$$
\liminf_{d_{I}+|\frac{d_I}{d_S}-\sigma|\to 0}u\ge \Lambda=\underline{u}_0\ \ \ \  \text{uniformly on}\ \bar\Om.
$$
Clearly, we also have
$$
\liminf_{d_{I}+|\frac{d_I}{d_S}-\sigma|\to 0}v\ge \underline{v}_0=0\ \ \ \ \text{uniformly on}\ \bar\Om.
$$
We can now proceed as in the proof of Lemma \ref{Appendix-lem6}  to show that
    \begin{equation}  \label{ZZ1-1}
    \liminf_{d_{I}+|\frac{d_I}{d_S}-\sigma|\to 0}u\ge\underline{u}_n \quad \text{and}\quad \liminf_{d_{I}+|\frac{d_I}{d_S}-\sigma|\to 0}v\ge\underline{v}_n \ \ \  \ \text{uniformly on}\ \bar\Om,
    \end{equation}
where $(\underline{u}_n,\underline{v}_n)$ is given by Lemma \ref{l3.2}(i).

Note that $(u,v)$ satisfies \eqref{C3} and $0<v<u$. So there is $d_0>0$ such that for all  $d_I+|\frac{d_I}{d_S}-\sigma|<d_0$,
  $$
  u<\Lambda_{\infty}+(\sigma+1)\Big(\frac{\Lambda}{\eta}\Big)_{\max}=\overline{u}_0 \quad \text{and}\quad v<\overline{u}_0=\overline{v}_0,\quad \forall\, x\in\bar\Omega.
  $$
Thus, we can also follow the procedure as in the proof of Lemma \ref{Appendix-lem6} to show that
\begin{equation} \label{ZZ1-2}
\limsup_{d_{I}+|\frac{d_I}{d_S}-\sigma|\to 0}u\le\overline{u}_n \quad \text{and}\quad \limsup_{d_{I}+|\frac{d_I}{d_S}-\sigma|\to 0}v\le\overline{v}_n\ \ \ \ \text{uniformly on}\ \bar\Om,
\end{equation}
where $(\overline{u}_n,\overline{v}_n)$ is given by Lemma \ref{l3.2}(ii). In view of \eqref{ZZ1-1}, \eqref{ZZ1-2} and Lemma \ref{l3.2}, we obtain the desired result.
\end{proof}

\section{Numerical simulations and conclusions}\label{Nuremical-Sim-section}

\subsection{Simulations}
In this subsection, we simulate \eqref{model-2} with the objective to illustrate the proved results and  complement the cases that have not rigorously proven yet.

We take the domain $\Omega$ to be a unit circle centered the origin. The initial data will always be  $S_0=0.8$ and $I_0=0.2$.  Choose $\Lambda=1$ such that $\tilde S=1$ is the unique solution of $-d_S\Delta = \Lambda- S$ with homogeneous Neumann boundary condition.  We only consider the case $p=1$ and $q=0.5$.
{ We will solve the time-dependent problem \eqref{model-2} instead of the steady state problem \eqref{1-2} with $d_S$ or $d_I$ being small. Model \eqref{model-2} will be solved on a sufficiently large time interval $[0, T]$ so that the variation of $(S(\cdot, t), I(\cdot, t))$ is very small when $t\approx T$.  Then $(S(\cdot, T), I(\cdot, T))$ will be presented below as an approximation of the asymptotic limit of EE solutions of \eqref{1-2} as $d_S\to 0$ or $d_I\to 0$. We  use the finite element package in Matlab with a mesh size of 1453 nodes for the domain $\Omega$. The programs for the simulations can be found at \url{https://github.com/YixiangMath/Diffusive-SIS-Model-.git}. }

 \begin{figure}
     \centering
     \begin{subfigure}[b]{0.48\textwidth}
         \centering
          \includegraphics[scale=.3]{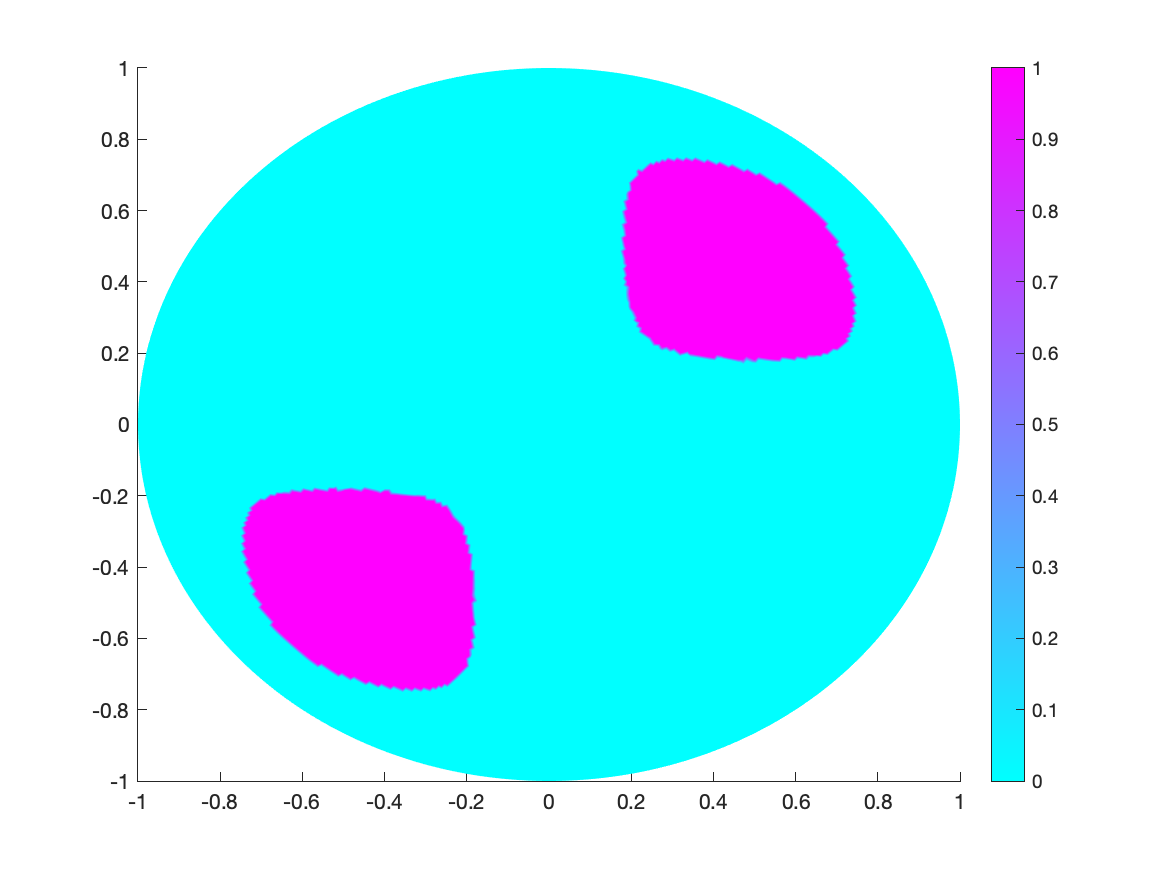}
         \caption{$\chi_{\{(x, y): h^{1/q}(x, y)-S(x, y, 400)<10^{-4}\}}$ when $d_S=1, d_I=10^{-5}$.}
         \label{fig:a8}
     \end{subfigure}
     \begin{subfigure}[b]{0.48\textwidth}
        \centering
     \includegraphics[scale=.3]{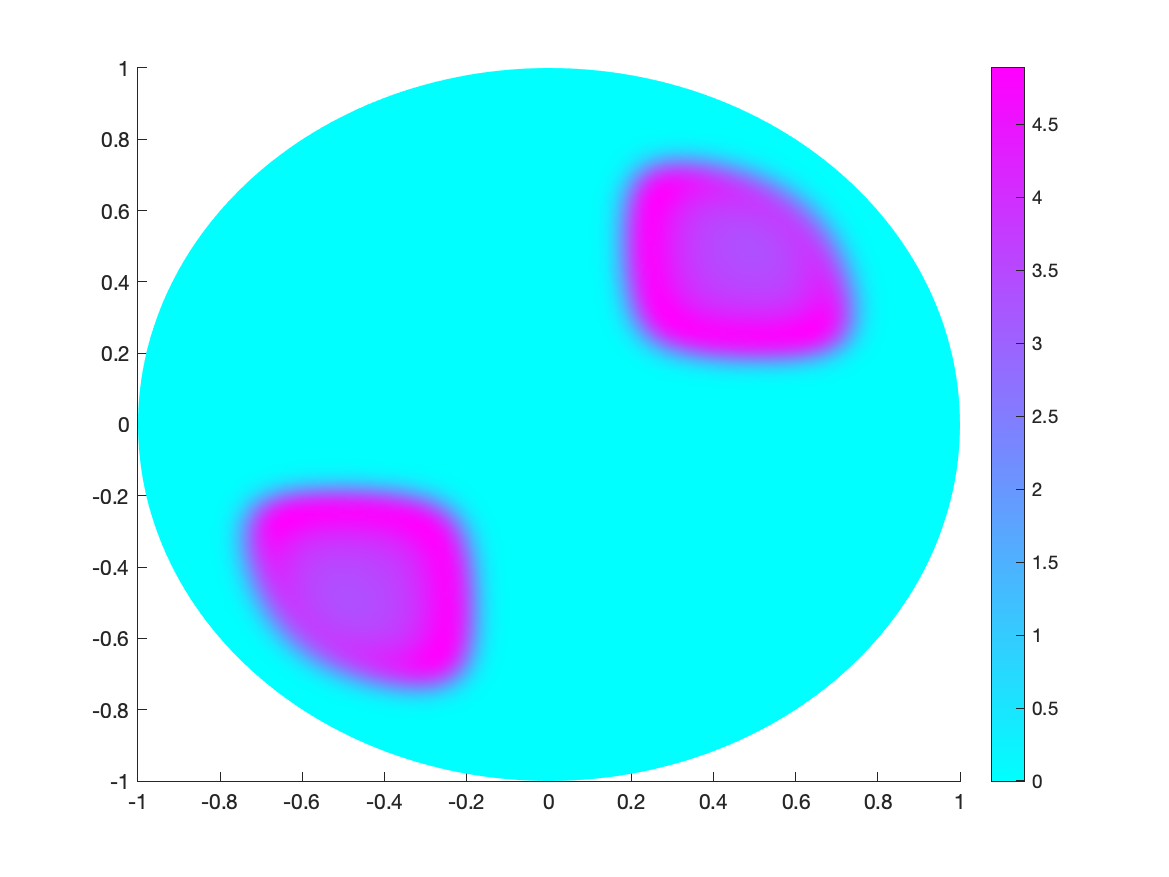}
             \caption{$I(x, y, 400)$ when $d_S=1, d_I=10^{-5}$.}
         \label{fig:b8}
     \end{subfigure}

\vskip25pt
         \begin{subfigure}[b]{0.48\textwidth}
         \centering
          \includegraphics[scale=.3]{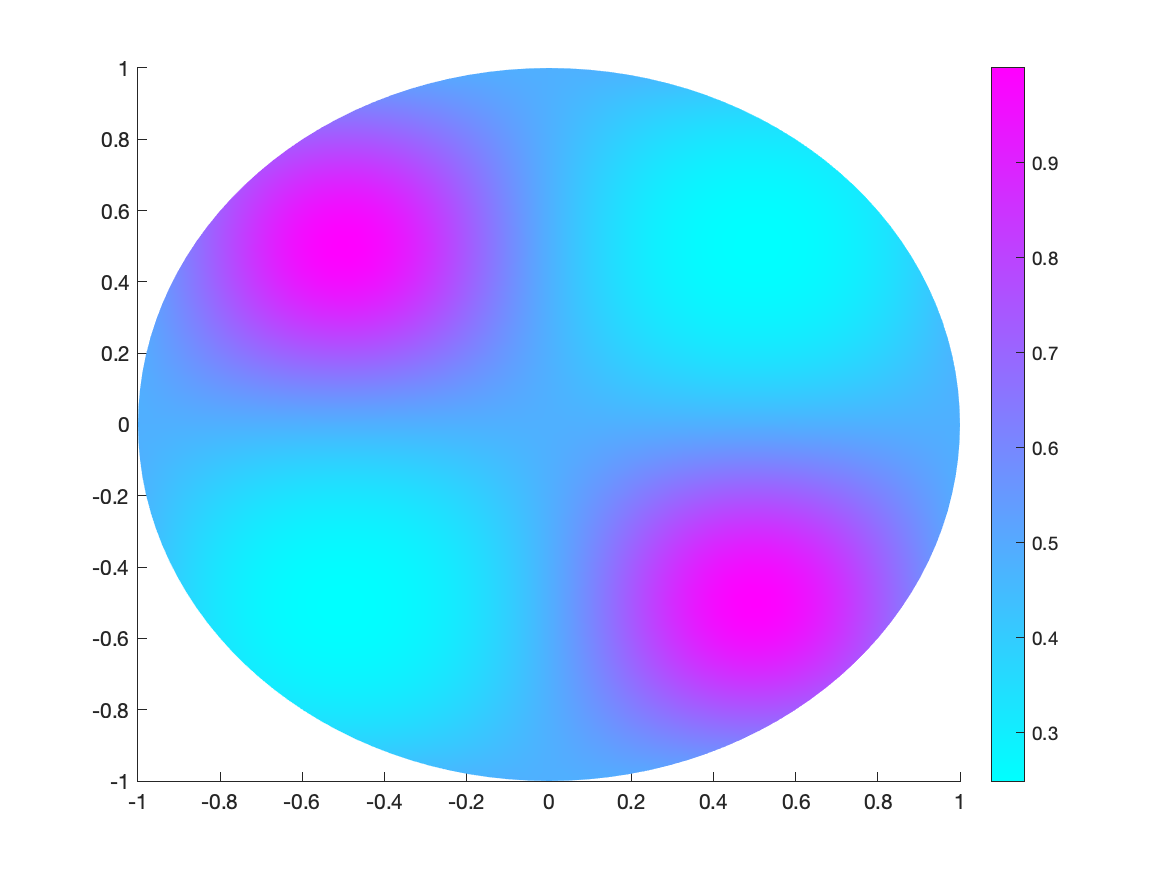}
         \caption{$S(x, y, 400)$ when $d_S=10^{-5}, d_I=1$.}
         \label{fig:a9}
     \end{subfigure}
     \begin{subfigure}[b]{0.48\textwidth}
        \centering
     \includegraphics[scale=.3]{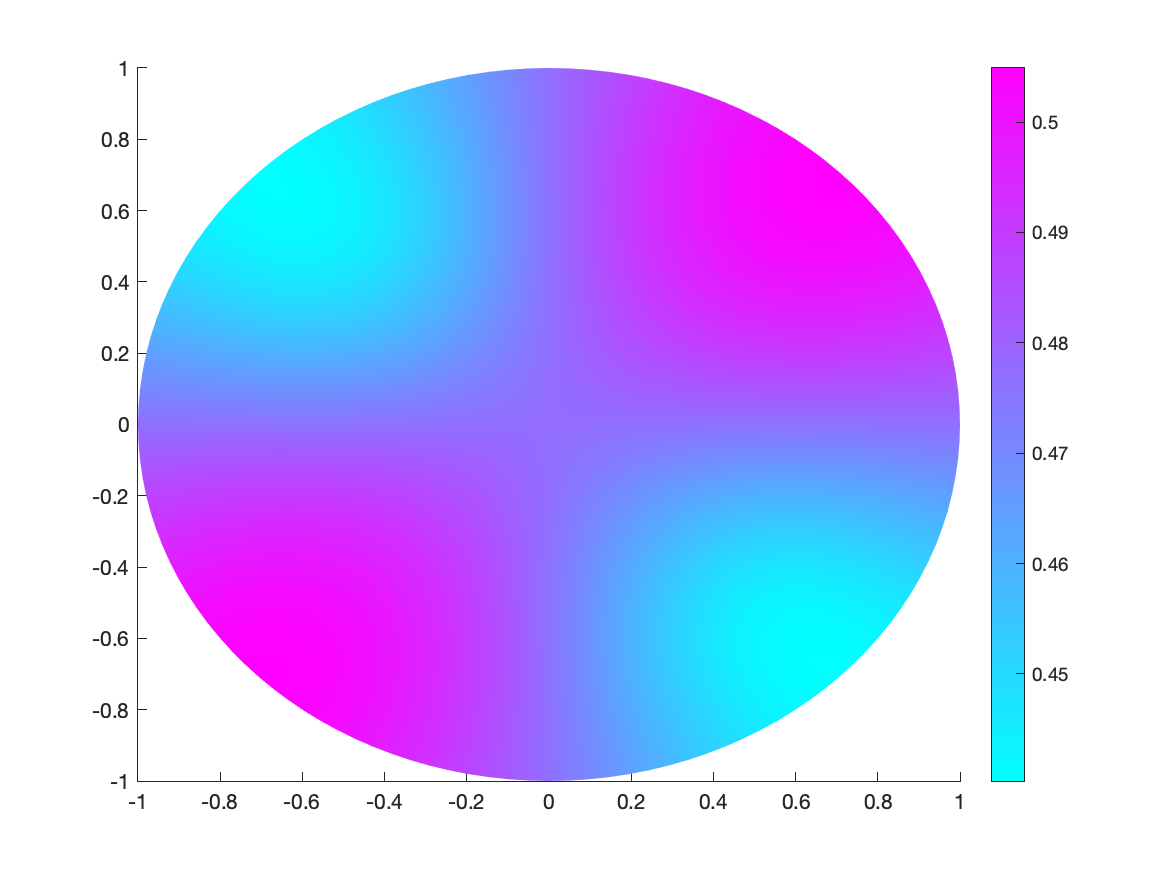}
             \caption{$I(x, y, 400)$ when $d_S=10^{-5}, d_I=1$.}
         \label{fig:b9}
     \end{subfigure}

\vskip25pt
         \begin{subfigure}[b]{0.48\textwidth}
         \centering
          \includegraphics[scale=.3]{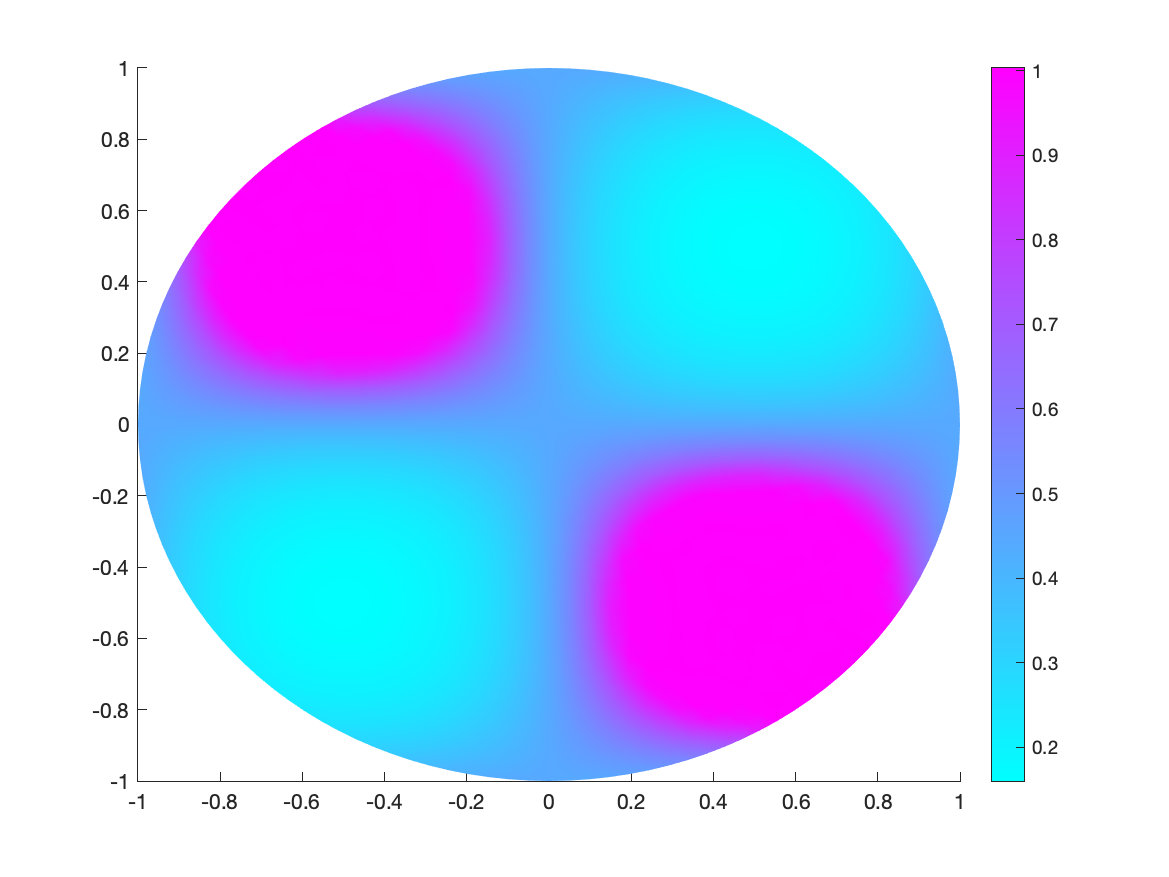}
         \caption{$S(x, y, 400)$ when $d_S=d_I=10^{-5}$.}
         \label{fig:a10}
     \end{subfigure}
     \begin{subfigure}[b]{0.48\textwidth}
        \centering
     \includegraphics[scale=.3]{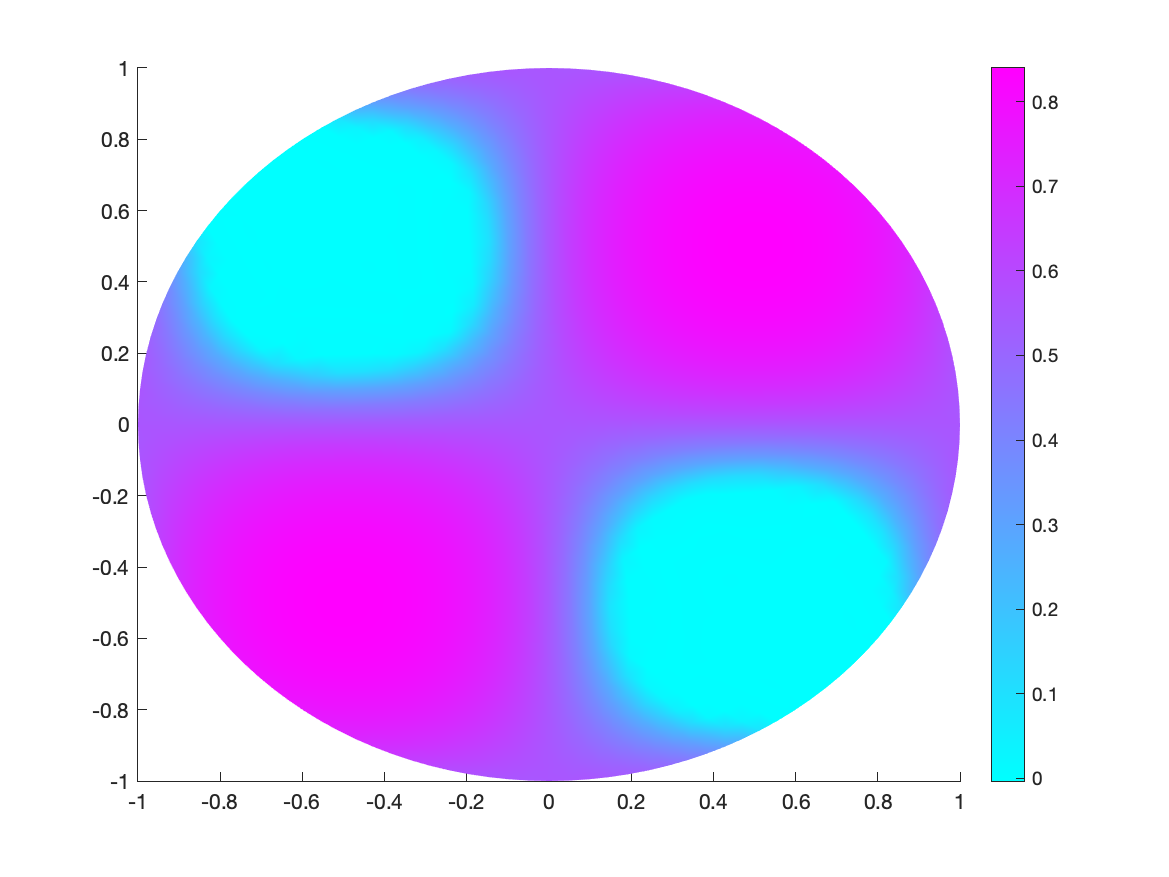}
             \caption{$I(x, y, 400)$ when $d_S=d_I=10^{-5}$.}
         \label{fig:b10}
     \end{subfigure}
    \caption{Simulation of model \eqref{model-2} with $p=1$ and $q=0.5$. The disease transmission, recovery, and mortality rates are $
\beta(x, y)=1.5+\sin(\pi x)\sin(\pi y)$, $\gamma(x, y)=1$, and $\eta(x, y)=1$,  respectively.}
     \label{fig:3}
\end{figure}


\textit{First simulation.}  The disease transmission, recovery and disease induced mortality rates are
$$
\beta(x, y)=3+2\sin(\pi x)\sin(\pi y) \ \ \text{and} \ \ \gamma(x, y)=\eta(x, y)=1,
$$
respectively.  The minimum of $h(x, y)=(\gamma(x, y)+\eta(x, y))/\beta(x, y)$ on $\bar\Omega$ is attained at $(0.5, 0.5)$ and $(-0.5, -0.5)$, which can be treated as the points of the highest risk.

According to Theorem \ref{TH2.8}(i), if we restrict the movement of infected individuals, the EE $(S, I)$ will have a subsequence that converges weakly to $(S^*, \mu)$, where $S^*\le h^\frac{1}{q}$ and $\mu$ resides in the set $\{(x,y)\in\Omega: S^*(x, y)\neq h^\frac{1}{q}(x, y)\}$. This is shown in Figures \ref{fig:3}(a)-(b) with parameters $d_S=1$ and $d_I=10^{-5}$. The red regions in Figure \ref{fig:3}(a) represent $\{(x,y)\in\Omega: S^*(x, y)\neq h^\frac{1}{q}(x, y)\}$, while Figure \ref{fig:3}(b) shows the distribution of infected individuals in these regions. It is worth noting that the points $(0.5, 0.5)$ and $(-0.5, -0.5)$ are included in these regions. 

On the other hand, if we restrict the movement of susceptible individuals ($d_S=10^{-5}$, $d_I=1$), Figures \ref{fig:3}(c)-(d) demonstrate that infected individuals are uniformly distributed spatially, and the density of susceptible individuals is lower in regions with lower risk compared to those with higher risk. This observation aligns with the findings of Theorem \ref{TH2.9}.

When both susceptible and infected individuals have restricted movement ($d_S= d_I=10^{-5}$), as depicted in Figures \ref{fig:3}(e)-(f), it appears that the infection in  lower risk regions is eradicated, and the density of susceptible individuals is higher in lower risk regions compared to higher risk regions. This agrees with Theorem \ref{TH2.10}(i).

\textit{Second simulation.}  The disease transmission, recovery and disease induced mortality rates are
$$
\beta(x, y)=0.5, \ \gamma(x, y)=f(x)f(y), \ \ \text{and} \ \ \eta(x, y)=0.1,
$$
respectively, where
\begin{equation}\label{f}
    f(x)=
    \left\{
\begin{array}{ll}
  0.5+0.4x^2,   & x\le 0, \\
  0.5,   & 0<x\le 0.25,\\
  0.5+0.4(x-0.25)^2, & 0.25<x\le 0.5,\\
  0.5+1.6(x-0.625)^2,& 0.5<x\le 1.
\end{array}
\right.
\end{equation}
 The minimum of $h(x, y)=(\gamma(x, y)+\eta(x, y))/\beta(x, y)$ on $\bar\Omega$ is attained at
 $$
 \Omega^*=[0, 0.25]\times [0, 0.25]\cup (\{0.625\}\times [0, 0.25])\cup  ([0, 0.25]\times \{0.625\})\cup \{(0.625, 0.625)\},
 $$
 which is the set of points of the highest risk. Here, $\Omega^*$ consists with a point, two line segments, and one rectangular region.
 If we limit the movement  of infected people ($d_S=1, d_I=10^{-5}$), as shown in Figures \ref{fig:5}(a)-(b), the infected people live  near $\Omega^*$. In Figures \ref{fig:5}(c)-(d), we plot $\chi_{\{(x, y): h^{1/q}(x, y)-S(x, y, 400)<10^{-2}\}}$ and $\chi_{\{(x, y): h^{1/q}(x, y)-S(x, y, 400)<10^{-4}\}}$ ($\chi$ is the characteristic function), respectively. So the red regions in Figures \ref{fig:5}(c)-(d)  are approximations of $E:=\{(x,y)\in\Omega: S^*(x, y)\neq h^\frac{1}{q}(x, y)\}$. By Theorem \ref{TH2.8}(i), when the movement of infected individuals is restricted, the infected individuals will remain within $E$. This is  supported by the evidence presented in Figure \ref{fig:5}(b). Additionally, it is worth noting that these regions contain $\Omega^*$, and the infected individuals predominantly reside in the vicinity of $\Omega^*$. However, it is unclear what is the shape of $E$.

 \begin{figure}
     \centering
     \begin{subfigure}[b]{0.48\textwidth}
         \centering
          \includegraphics[scale=.3]{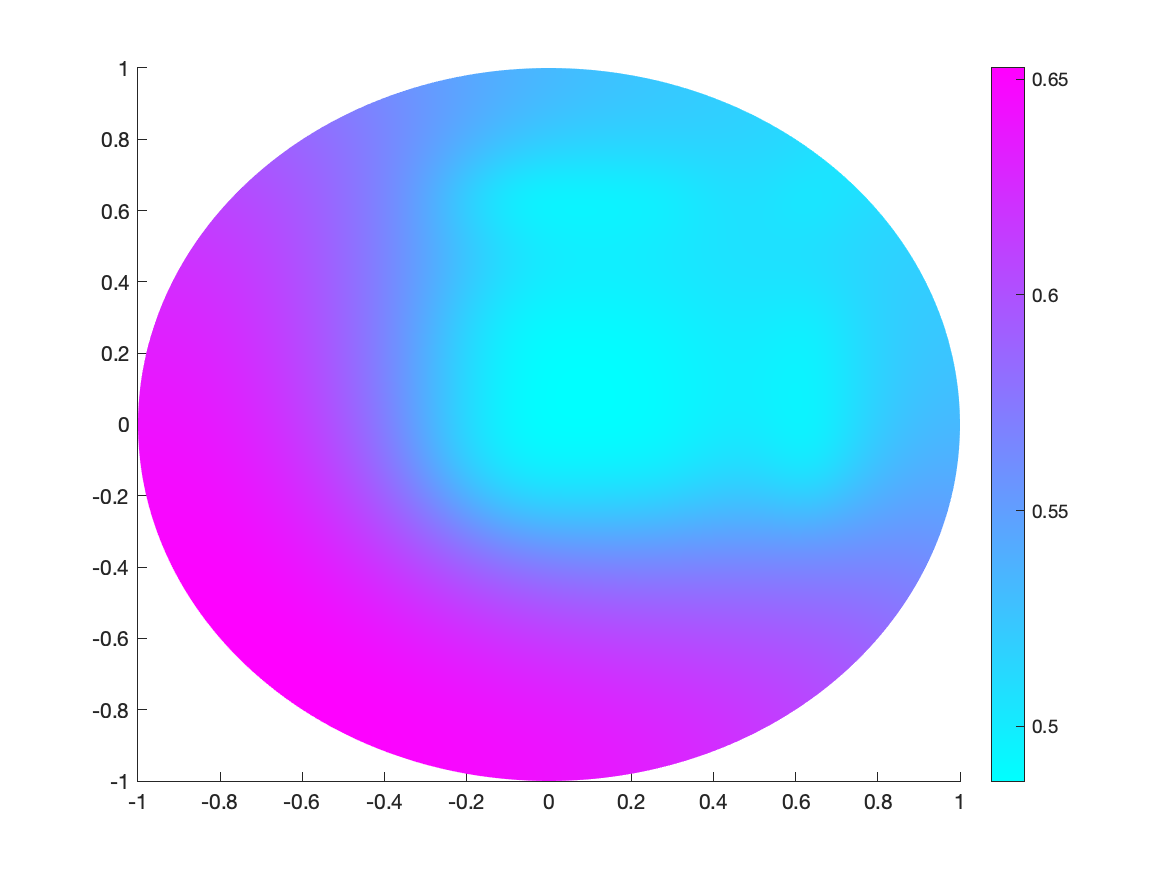}
        \caption{$S(x, y, 800)$}
         \label{fig:a14}
     \end{subfigure}
     \begin{subfigure}[b]{0.48\textwidth}
        \centering
     \includegraphics[scale=.3]{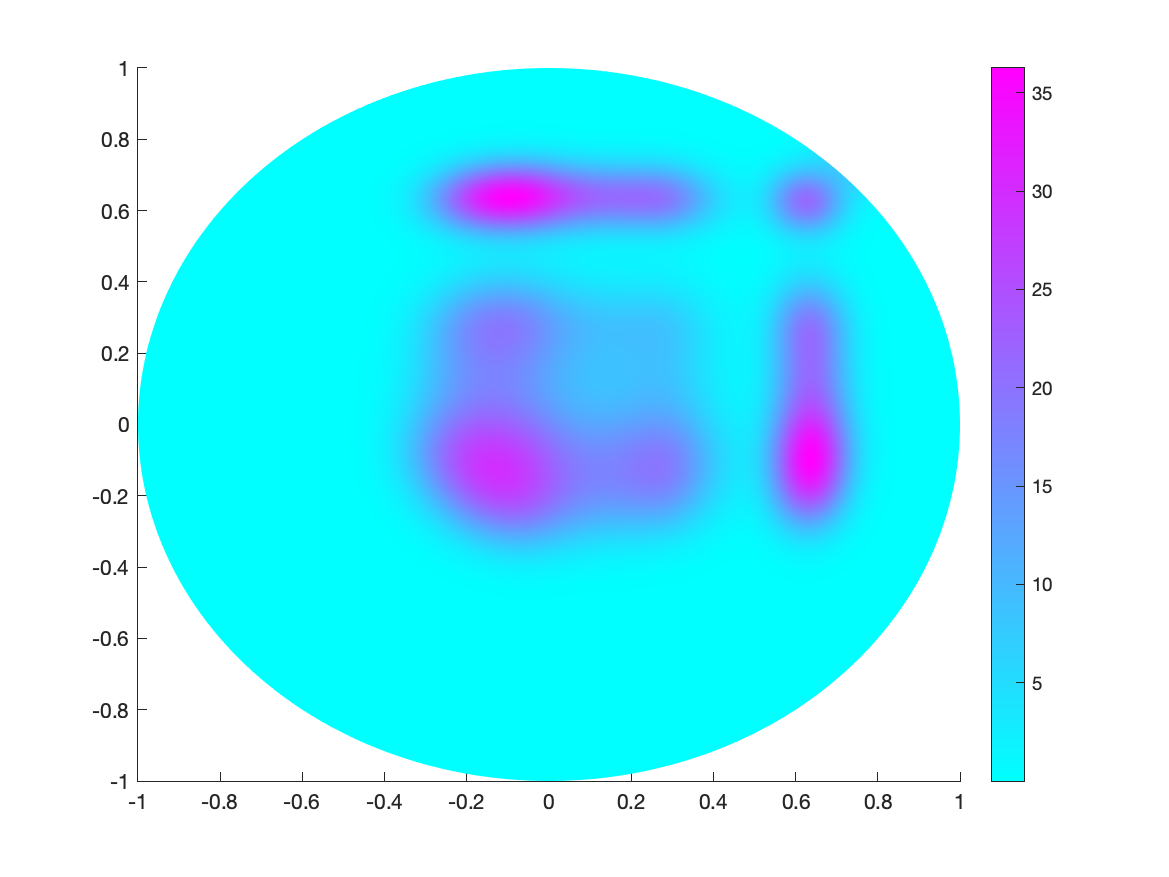}
             \caption{$I(x, y, 800)$}
         \label{fig:b14}
     \end{subfigure}

               \begin{subfigure}[b]{0.48\textwidth}
         \centering
          \includegraphics[scale=.3]{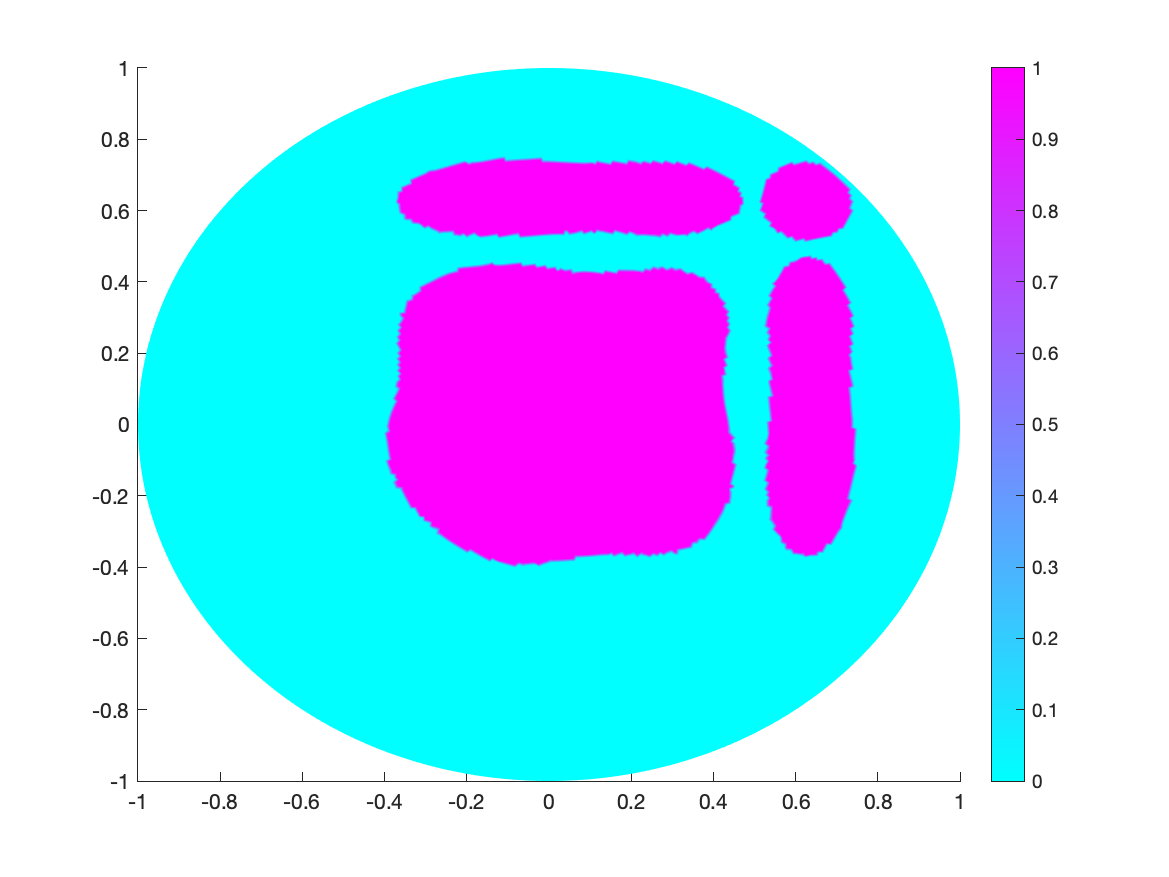}
        \caption{$\chi_{\{(x, y): h^{1/q}(x, y)-S(x, y, 800)<10^{-2}\}}$}
         \label{fig:a15}
     \end{subfigure}
     \begin{subfigure}[b]{0.48\textwidth}
        \centering
     \includegraphics[scale=.3]{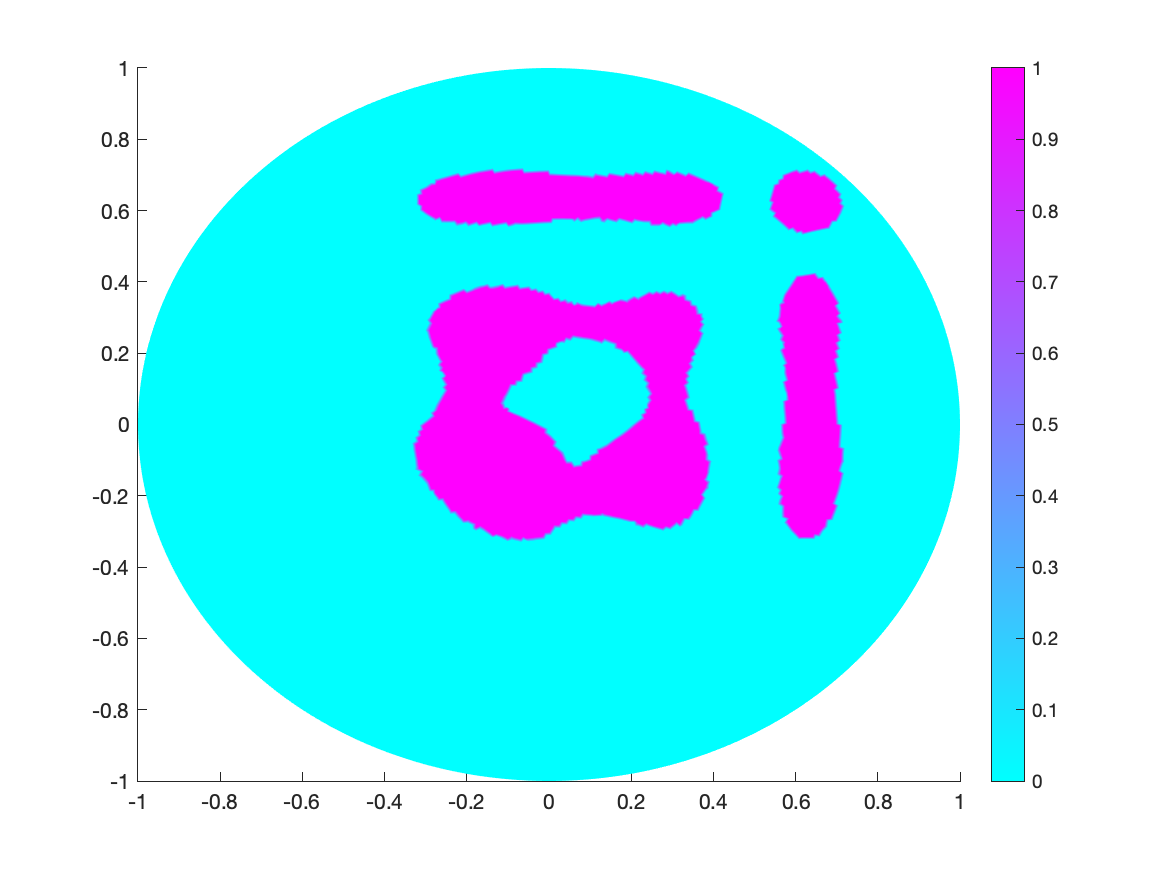}
             \caption{$\chi_{\{(x, y): h^{1/q}(x, y)-S(x, y, 800)<10^{-4}\}}$}
         \label{fig:b15}
     \end{subfigure}

     \caption{Simulation of model \eqref{model-2} with $p=1$, $q=0.5$, $d_S=1$, and $d_I=10^{-5}$. The disease transmission, recovery and mortality rates are $
\beta(x, y)=0.5$, $\gamma(x, y)=f(x)f(y)$, and $\eta(x, y)=0.1$ respectively, where $f$ is given by \eqref{f}. The minimum of $h=(\gamma+\eta)/\beta$ is attained at  $[0, 0.25]\times [0, 0.25]\cup (\{0.625\}\times [0, 0.25])\cup  ([0, 0.25]\times \{0.625\})\cup \{(0.625, 0.625)\}$, which consist with the points of the highest risk.}
     \label{fig:5}
\end{figure}

\subsection{Conclusions}

In this subsection, we discuss and compare the results for models \eqref{model-1} and \eqref{model-2} from the perspective of disease control. We start by considering the case $p=1$, i.e., the infection rate is proportional to the number of infected people.

Firstly, we consider the impact of limiting the movement of infected people (i.e., $d_I\to 0$). If the individual birth and  death rates have a negligible effect on the disease dynamics (i.e., model \eqref{model-1}), \cite[Theorem 2.1, Proposition 2.1]{PSW} imply  that the infected people will reside at the points of the highest risk (i.e. the points at which $\beta(x)/\gamma(x)$ attain the maximum). In the case that there are only finitely many points that are of the highest risk, a Dirac delta type concentration phenomenon appears.
However, when the susceptible individuals have a linear growth term and the disease-induced mortality is significant (i.e., model \eqref{1-2}), Theorem \ref{TH2.8}(i) indicates that the infected individuals will reside in the region $\{x\in\bar\Omega: S^*(x)=h^\frac{1}{q}(x)\}$. In this case, unlike in model \eqref{model-1}, the infected individuals may not concentrate at the highest-risk points when the highest-risk habitat consists of finitely many isolated points (see Figures \ref{fig:3}(a)-(b)).

Next, we consider the impact of limiting the movement of susceptible people  (i.e., $d_S\to 0$).
For model \eqref{model-1}, \cite[Theorem 2.2(i)]{PSW} states that the total population size $N$ has a significant effect on the disease dynamics: if $N$ is small ($N\le \int_{\Omega}r^\frac{1}{q}$), the disease will be eliminated; if $N$ is large ($N>\int_{\Omega}r^\frac{1}{q}$), the disease cannot be controlled. In contrast, for model \eqref{model-2}, Theorem \ref{TH2.9} implies that the disease can never be eliminated by limiting only the movement of susceptible individuals. This is confirmed by Figures \ref{fig:3}(c)-(d).

Then, we consider the impact of limiting the movement of both susceptible and infected people (i.e. $d_I+|\frac{d_I}{d_S}-\sigma|\to 0$). For both models \eqref{model-1} and \eqref{model-2},  \cite[Theorem 2.3(i)]{PSW} and Theorem \ref{TH2.10} (i) indicate that the infection will be eliminated only in some region of the domain. For model \eqref{model-2}, these results are supported by Figures \ref{fig:3}(e)-(f).

Finally, let's consider the case where $0<p<1$ (the infection rate is not directly proportional to the number of infected individuals). In this scenario, both models \eqref{model-1} and \eqref{model-2} yield consistent results, as shown in \cite[Theorems 2.1(ii), 2.2(ii), and 2.3(ii)]{PSW} and Theorems \ref{TH2.8}(ii), \ref{TH2.9}(ii), and \ref{TH2.10}(ii). These results indicate that restricting population movement alone cannot eliminate the disease in any location. However, it remains an open problem to investigate whether strategies that limit population movement can effectively reduce the number of infected individuals in both models.


In conclusion, our main findings in \cite{PSW} and this paper suggest that when the infection rate is proportional to the number of infected individuals ($p=1$), different strategies for limiting population movement can lead to different outcomes in terms of disease elimination in the two models studied. However, when the infection rate is not proportional to the number of infected individuals ($0<p<1$), limiting population movement alone seems to be ineffective in completely eradicating the disease in both models. Interestingly, the power $q$ appears to have minimal influence on disease transmission in both scenarios. These theoretical results presented in \cite{PSW} and this paper may provide new insights for the development of effective and efficient measures for controlling infectious diseases.

\section{Appendix}\label{Appendix-section}

\subsection{Proof of Proposition \ref{theorem_ex}}

The proof of Proposition \ref{theorem_ex} is similar to that of model \eqref{model-1} in \cite{PW2021}, so we will provide a brief sketch of the proof here.

\begin{proof}[Proof of Proposition \ref{theorem_ex}] The first step is to show that if a nonnegative solution exists it satisfies \eqref{th-b1} and if the solution is global it satisfies  \eqref{th-b2}. To show these results, by adding up the two equations in
\eqref{model-2} and integrating it over $\Omega$, we can see that
\eqref{th-b1}-\eqref{th-b2} hold with the $L^\infty$ norm replaced by the $L^1$ norm.
If $p=1$, we can adopt the argument in  \cite[Theorem 2.1]{li2018diffusive} to prove the desired results. Indeed, consider the ODE problem:
\begin{equation}
u_t=\Lambda_{\max}+r^\frac{1}{q}_{\max}-u, \ \ t>0;\ \ \
u(0)=\max\{(S_0)_{\max},\ \Lambda_{\max},\ r^\frac{1}{q}_{\max}\},
\end{equation}
where $r=\gamma/\beta$. Then it is easy to see that $u(t)\ge r^\frac{1}{q}_{\max}$ for all $t\ge 0$. So $u$ satisfies
$$
u_t-d_S\Delta u\ge \Lambda -u-\beta u^qI+\gamma I.
$$
By the parabolic comparison principle, $S\le u$ for all $x\in\bar\Omega$ and $t\ge 0$. Then clearly, the desired results on the component $S$ hold. By using the equation of $I$ and \cite[Lemma 3.1]{PZ2012}, we can obtain the bounds on the component $I$.

If $0<p<1$, we can follow the techniques of \cite[Lemma 3.8]{PW2021}. For any $a>1$, let $b=(1-p)(a-1)/q+1$. By Young's inequality, one can show
\begin{equation}\label{ab}
S^{a-1}I\le \epsilon S^{q+a-1}I^p+C_\epsilon I^b \ \ \text{and} \ \ S^qI^{p+b-1}\le \epsilon S^{q+a-1}I^p+C_\epsilon I^b,\  \ \forall\, S,\, I>0,
\end{equation}
where $\epsilon>0$ can be arbitrarily small and $C_\epsilon>0$ depends on $\epsilon$. Multiplying the $S$ equation by $S^{a-1}$ and the $I$ equation by $I^{b-1}$, and then using \eqref{ab} and the interpolation inequality $\|w\|_{L^2(\Omega)}^2 \le \epsilon \|\nabla w\|_{L^2(\Omega)}^2+C_{\epsilon}\|w\|_{L^1(\Omega)}^2$ for all $w\in W^{1, 2}(\Omega)$, we obtain
\begin{eqnarray}\label{TTT1}
&&\frac{d}{dt}(\|S\|_{L^a(\Omega)}^a+\|I\|_{L^b(\Omega)}^b) \le\\
&&\quad \quad C_1\left(1+  \|S\|_{L^{a/2}(\Omega)}^a+\|I\|_{L^{b/2}(\Omega)}^b\right)-C_2(\|S\|_{L^a(\Omega)}^a+\|I\|_{L^b(\Omega)}^b),\ \forall t\ge 0, \nonumber
\end{eqnarray}
where $C_1, C_2$ depend on $a, b$ and the coefficients of the model but independent of initial data. By \eqref{TTT1} and induction, we can show that \eqref{th-b1}-\eqref{th-b2} hold with the $L^\infty$ norm replaced by the $L^k$ norm for any $k>1$. Then using a semigroup method (e.g., see \cite[Theorem 3.1]{PW2021}), we can obtain  \eqref{th-b1}-\eqref{th-b2}.

The second step is to show that a nonnegative solution, if exists, is positive for all $t>0$. Suppose that a nonnegative solution exists on $\bar\Omega\times [0, T]$. Let $M=\sup_{(x, t)\in\bar\Omega\times [0, T]} I(x, t)$. Then $S$ satisfies
$$
S_t-d_S\Delta S\ge \Lambda-S-\beta S^qM^p.
$$
By the maximum principle, one can show $S(x, t)>0$ for all $(x, t)\in\bar\Omega\times [0, T]$. Similarly, one can show the positivity of $I$ by using $I_t-d_I\Delta I\ge -(\gamma+\mu)I$.

Finally, by assumptions \rm(A1)-(A2), we can use the Banach fixed point theorem to show that \eqref{model-2} has a unique nonnegative solution on $\bar\Omega\times [0, T']$, where $T'>0$ depends only on the $L^\infty$-norm of the initial data. By the first two steps, the solution can be extended to be a positive global solution, and so the assertion (i) holds.

Based on the assertion (i), the proof of (ii) is similar to that of \cite[Theorem 2.5]{PW2021} and we omit the details here.
\end{proof}

\subsection{A weak convergence result of function sequences} In this subsection, we shall establish the following weak convergence result of nonnegative  function sequences, which will be used in the proof of Theorem \ref{TH2.8}.

\begin{prop}\label{lemma_fg} Let $q>0$ be a given constant. Suppose that $\{f_n\}$ are bounded nonnegative functions in $L^\infty(\Omega)$ and precompact in $L^2(\Omega)$, and $\{g_n\}$ are bounded nonnegative functions  in $L^1(\Omega)$. Then there exist subsequences $\{f_{n_k}\}$ and $\{g_{n_k}\}$ such that
\begin{eqnarray}\nonumber
f_{n_k}\overset{\ast}{\rightharpoonup} f\ \ \text{in}\ L^\infty(\Omega),\ \
g_{n_k}\overset{\ast}{\rightharpoonup} \nu\ \ \text{in}\ [C(\bar\Omega)]^*,\ \
\nonumber (f_{n_k})^{q}g_{n_k}\overset{\ast}{\rightharpoonup} \mu\ \ \text{in}\ [C(\bar\Omega)]^*,
\end{eqnarray}
where $f\in L^\infty(\Omega)$,  $\nu$ and $\mu$ are Radon measures. Here, $[C(\bar\Omega)]^*$  is the dual of $C(\bar\Omega)$. Moreover there exist measurable set $F_1$ with Lebesgue measure zero and closed sets $F_l$, $l\ge 2$, with $\Omega=\cup_{l\ge 1} F_l$ such that $f$ is continuous on $F_l$ for all $l\ge 2$, and $\mu=f^q\nu$ on $\cup_{l\ge 2}F_l$  in the sense that
    $$
    \int_O \varphi d\mu=\int_O f^q\varphi d\nu
    $$
for any Borel set $O\subset\cup_{l\ge 2}F_l$ and $\varphi\in C(\bar\Omega)$.
    \end{prop}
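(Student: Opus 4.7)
The three weak-$\ast$ limits follow from Banach--Alaoglu. The $L^\infty$-bound on $\{f_n\}$ yields $f_{n_k}\overset{\ast}{\rightharpoonup} f$ in $L^\infty(\Omega)$ after passing to a subsequence. Since $\|f_n^q g_n\|_{L^1}\le \|f_n\|_\infty^q\|g_n\|_{L^1}\le C$, the measures $g_n\,d\mathcal L$ and $f_n^q g_n\,d\mathcal L$ have bounded total variation on $\bar\Omega$, so a further extraction yields $g_{n_k}\,d\mathcal L\overset{\ast}{\rightharpoonup}\nu$ and $f_{n_k}^q g_{n_k}\,d\mathcal L\overset{\ast}{\rightharpoonup}\mu$ in $[C(\bar\Omega)]^\ast$ for finite Radon measures $\nu,\mu$. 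The $L^2(\Omega)$-precompactness and a diagonal argument then let me assume $f_{n_k}\to f$ in $L^2(\Omega)$ and pointwise a.e.\ on $\Omega$, and the uniform $L^\infty$-bound combined with dominated convergence gives $f_{n_k}^q\to f^q$ a.e.\ on $\Omega$ and in $L^p(\Omega)$ for every $p<\infty$.

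To construct the sets $F_l$, I would apply Lusin's and Egorov's theorems on the finite-measure space $(\Omega,d\mathcal L)$ and intersect the two resulting closed sets: this produces, for each $l\ge 2$, a compact $K_l\subset\Omega$ with $|\Omega\setminus K_l|<1/l$ on which $f|_{K_l}$ is continuous and $f_{n_k}\to f$ uniformly. To enforce the boundary estimate below, I would shrink $K_l$ to $F_l:=\{x\in K_l:\operatorname{dist}(x,\Omega\setminus K_l)\ge\eta_l\}$, a compact subset of $K_l$ which inherits the continuity of $f$ and the uniform convergence of $\{f_{n_k}\}$. Because $\partial F_l$ lies in the level surface $\{\operatorname{dist}(\cdot,\Omega\setminus K_l)=\eta_l\}$ of a continuous function and a finite Radon measure assigns positive mass to at most countably many such level surfaces, I can pick $\eta_l$ arbitrarily small from a co-countable set so that both $|K_l\setminus F_l|<1/l$ and $\nu(\partial F_l)=0$. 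Setting $F_1:=\Omega\setminus\bigcup_{l\ge 2}F_l$ gives a Lebesgue-null set, and $\Omega=\bigcup_{l\ge 1}F_l$.

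By Riesz representation the Borel-set identity follows once I establish $\int_{F_l}\varphi\,d\mu=\int_{F_l}\varphi f^q\,d\nu$ for every $\varphi\in C(\bar\Omega)$ and every $l\ge 2$, from which $\sigma$-additivity extends the equality from each $F_l$ to $\bigcup_{l\ge 2}F_l$. I would Tietze-extend $f|_{F_l}$ to $\tilde f_l\in C(\bar\Omega)$ with $\|\tilde f_l\|_\infty\le\|f\|_\infty$, and set $\psi_\epsilon(x):=\max\{0,1-\operatorname{dist}(x,F_l)/\epsilon\}$, so that $\psi_\epsilon\in C(\bar\Omega)$, $\psi_\epsilon=1$ on $F_l$, $\operatorname{supp}\psi_\epsilon\subset\bar U_\epsilon:=\{\operatorname{dist}(\cdot,F_l)\le\epsilon\}$, and $\psi_\epsilon\downarrow\chi_{F_l}$ pointwise. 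Dominated convergence gives $\int\psi_\epsilon\varphi\,d\mu\to\int_{F_l}\varphi\,d\mu$ and $\int\psi_\epsilon\varphi\tilde f_l^q\,d\nu\to\int_{F_l}\varphi f^q\,d\nu$, so the desired identity reduces to
\[
\lim_{\epsilon\downarrow 0}\lim_{k\to\infty}\int_{\bar\Omega}\psi_\epsilon\varphi\,(f_{n_k}^q-\tilde f_l^q)\,g_{n_k}\,d\mathcal L=0.
\]
Splitting this integral over $F_l$ and over $U_\epsilon\setminus F_l$, the first piece is controlled by the uniform convergence on $F_l$ together with $\|g_{n_k}\|_{L^1}\le C$, while the second is bounded by $2\|f\|_\infty^q\|\varphi\|_\infty\int_{U_\epsilon\setminus F_l}g_{n_k}\,d\mathcal L$. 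Portmanteau-type inequalities, namely $\limsup_k\int_{U_\epsilon}g_{n_k}\le\nu(\bar U_\epsilon)$ (outer regularity of $\nu$) and $\liminf_k\int_{F_l}g_{n_k}\ge\nu(\operatorname{int}F_l)$ (Urysohn on compact subsets of $\operatorname{int}F_l$), combine to give $\limsup_k\int_{U_\epsilon\setminus F_l}g_{n_k}\le\nu(\bar U_\epsilon\setminus F_l)+\nu(\partial F_l)$, which tends to $0$ as $\epsilon\downarrow 0$ since $\bar U_\epsilon\downarrow F_l$ and $\nu(\partial F_l)=0$ by construction. The main obstacle is precisely this boundary condition $\nu(\partial F_l)=0$: simple examples in which $f$ is continuous but $f_n$ fails to converge uniformly at a point where $g_n$ concentrates show that $\mu$ and $f^q\nu$ can disagree at concentration points, so the shrinking step above is essential to absorb such points into the Lebesgue-null set $F_1$.
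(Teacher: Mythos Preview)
Your argument is correct and, in the key identification step, more careful than the paper's. Both proofs obtain the $F_l$ via Egoroff plus Lusin, but the paper then simply writes
\[
\Bigl|\int_{\tilde F_l}\varphi\,d\mu-\int_{\tilde F_l}\varphi f_{n_k}^q g_{n_k}\Bigr|
\quad\text{and}\quad
\Bigl|\int_{\tilde F_l}\varphi f^q g_{n_k}-\int_{\tilde F_l}\varphi f^q\,d\nu\Bigr|
\]
and asserts these vanish as $k\to\infty$. Since weak-$\ast$ convergence in $[C(\bar\Omega)]^{\ast}$ is only against continuous test functions, testing against $\chi_{\tilde F_l}\varphi$ or $\chi_{\tilde F_l}\varphi f^q$ is not a priori justified; the paper glosses over exactly the boundary issue you identify. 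Your route---shrinking $K_l$ so that $\nu(\partial F_l)=0$, replacing $\chi_{F_l}$ by a continuous cutoff $\psi_\varepsilon$, Tietze-extending $f|_{F_l}$, and closing with a Portmanteau estimate on $U_\varepsilon\setminus F_l$---is the honest way to pass the limit, and your example in the last sentence shows the shrinking is not merely cosmetic. What you lose is brevity; what you gain is that every limit you take is against a genuine element of $C(\bar\Omega)$. One small point to tighten: to pass from ``$\int_{F_l}\varphi\,d\mu=\int_{F_l}\varphi f^q\,d\nu$ for all $l$'' to arbitrary Borel $O\subset\bigcup_{l\ge2}F_l$, you should either make the $F_l$ increasing (replace $F_l$ by $\bigcup_{2\le j\le l}F_j$, which preserves all the properties you need) or disjointify them as the paper does with its $\tilde F_l$, and note that since $C(\bar\Omega)|_{F_l}=C(F_l)$ by Tietze, your identity for all $\varphi\in C(\bar\Omega)$ forces $\mu|_{F_l}=(f^q\nu)|_{F_l}$ as Borel measures on $F_l$.
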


\begin{proof} Since $\{f_n\}$ are bounded in $L^\infty(\Omega)$ and $[L^1(\Omega)]^*=L^\infty(\Omega)$,
the Banach-Alaoglu theorem enables us to conclude that, there is a subsequence $f_{n_k}$ such that $f_{n_k}\overset{\ast}{\rightharpoonup} f$ in $L^\infty(\Omega)$ as $k\to\infty$.
Note that $\{(f_{n_k})^qg_{n_k}\}$  and $\{g_{n_k}\}$  are both bounded in $L^1(\Omega)$, by the Riesz representation theorem and the Banach-Alaoglu theorem, up to a further subsequence, we may assume that, as $k\to\infty$, $g_{n_k}\overset{\ast}{\rightharpoonup}\nu$ in $[C(\bar\Omega)]^*$, and $f_{n_k}^qg_{n_k}\overset{\ast}{\rightharpoonup} \mu$ in $[C(\bar\Omega)]^*$, where $\nu$ and $\mu$ are finite Radon measures.

Since $\{f_n\}$ is precompact in $L^2(\Omega)$, we may further assume $f_{n_k}$ converges pointwise to $f$ a.e in $\Omega$. By the Egoroff's theorem, there exists closed set $G_l$ with $\omega(G_l^c)<\frac{1}{2l}$ for any $l\ge 2$ such that $f_{n_k}\to f$ uniformly on $G_l$ as $k\to\infty$. Here, $\omega$ denotes the Lebesgue measure and $G_l^c$ is the complement of $G_l$ in $\Omega$. In addition, due to the Lusin's theorem, there exists closed set $H_l$ with $\omega(H_l^c)<\frac{1}{2l}$ for any $l\ge 2$ such that $f$ is continuous on $H_l$ as $k\to\infty$.

Set $F_l=G_l\cap H_l$ for $l\ge 2$, and $F_1=\cap_{l\ge 2} F_l^c$. Then $F_l$ is closed such that  $f_{n_k}$ converges uniformly to $f$ and $f$ is continuous on $F_l$ for all $l\ge 2$. In view of
    $$
    \omega(F_l^c)=\omega(G_l^c\cup H_l^c)\le \omega(G_l^c)+\omega(H_l^c)<\frac{1}{2l}+\frac{1}{2l}=\frac{1}{l},
    $$
we have $\omega(F_1)=\lim_{l\to\infty} \omega(F_l^c)=0$. Moreover, $\cup_{l\ge 1}F_l=\Omega$.

Let $\tilde F_1=F_1$ and $\tilde F_l=F_l\backslash\cup_{1\le j\le l-1}\tilde F_j$ for $j\ge 2$.
Then $\tilde F_l \ (l\ge 1)$ are disjoint Borel sets. For any $l\ge 2$ and $\varphi\in C(\bar\Omega)$, there holds that
    \begin{eqnarray*}
        &&\left|\int_{\tilde F_l} \varphi d\mu-\int_{\tilde F_l} \varphi f^qd\nu\right|\\
        &\le&    \left|\int_{\tilde F_l} \varphi d\mu-\int_{\tilde F_l} \varphi f_{n_k}^qg_{n_k}\right|+\left|\int_{\tilde F_l} \varphi f_{n_k}^qg_{n_k}-\int_{\tilde F_l} \varphi f^qg_{n_k}\right|+\left|\int_{\tilde F_l} \varphi f^qg_{n_k}-\int_{\tilde F_l} \varphi f^qd\nu\right|\\
    &\le& \left|\int_{\tilde F_l} \varphi d\mu-\int_{\tilde F_l} \varphi f_{n_k}^qg_{n_k}\right|+\|\varphi\|_{L^\infty(\Omega)}\|f_{n_k}^q-f^q\|_{L^\infty(\tilde F_l)}\int_{\tilde F_l}g_{n_k}
    \\
    &&\ \ +\left|\int_{\tilde F_l} \varphi f^qg_{n_k}-\int_{\tilde F_l} \varphi f^qd\nu\right|.
    \end{eqnarray*}
    Taking $k\to \infty$ and using $f_{n_k}\to f$ uniformly on $\tilde F_l$, we obtain
    $$
    \int_{\tilde F_l} \varphi d\mu-\int_{\tilde F_l} \varphi f^qd\nu=0, \ \ \forall l\ge 2.
    $$
For any Borel set $O\subset \cup_{l\ge 2}F_l= \cup_{l\ge 2}\tilde F_l$, we have
\begin{equation}\label{fd}
    \int_O \varphi d\mu=\sum_{l\ge 2}\int_{\tilde F_l}\varphi \chi_{O}d\mu=\sum_{l\ge 2}\int_{\tilde F_l} \varphi f^q\chi_{O}d\nu=\int_O f^q\varphi d\nu,
\end{equation}
    where $\chi_O$ is the characteristic function of $O$. This establishes Proposition \ref{lemma_fg}.
\end{proof}

\subsection{Analysis on a cooperative system}
In this subsection, we fix strictly positive  H\"older continuous functions $\tilde{\eta}$, $\tilde{\Lambda}$ and $\tilde{\gamma}$, and set $\tilde{h}=\frac{\tilde{\gamma}+\tilde{\eta}}{\beta}$ and $\tilde{r}=\frac{\tilde{\gamma}}{\beta}$. Then we can establish the following result.

\begin{prop}\label{prop-6.2} Let $q>0$ and  ${\sigma}>0$ such that ${\sigma}> \tilde{\eta}_{\max}$. Suppose  that $\hat{\Om}:=\{x\in\Om:\ \tilde{\Lambda}(x)>\tilde{h}^\frac{1}{q}(x) \}\ne\emptyset$. Then there exists $d_*>0$ such that if $d_S+|\frac{d_I}{d_S}-\sigma|<d_*$ the cooperative system \begin{equation}\label{C9}
    \begin{cases}
\displaystyle        d_S\Delta \tilde{u}+\tilde{\Lambda} -\tilde{u}+\left(1-\frac{d_S}{d_I}\tilde{\eta}\right)\tilde{v}=0,\ \ \ & x\in\Om,\cr
\displaystyle       d_S\Delta \tilde{v}+\frac{d_S\beta}{d_I}\left[(\tilde{u}-\tilde{v})^q-\tilde{h}\right]\tilde{v}=0, & x\in\Om,\cr
        \partial_{\nu}\tilde{u}=\partial_\nu\tilde{v}=0, & x\in\partial\Om,\cr
        0<\tilde{v}<\tilde{u}, & x\in\bar{\Om}
    \end{cases}
\end{equation}
has at least one classical solution. Furthermore, every solution $(\tilde{u},\tilde{v})$ of \eqref{C9}  satisfies
\begin{equation}\label{C10}
\lim_{d_S+|\frac{d_I}{d_S}-\sigma|\to 0}(\tilde{u},\tilde{v})=\Big(\min\{\tilde{\Lambda},\tilde{h}^{\frac{1}{q}}\}
+\frac{\title{\sigma}}{\tilde{\eta}}\big(\tilde{\Lambda}-\tilde{h}^{\frac{1}{q}}\big)_+,\ \frac{\title{\sigma}}{\tilde{\eta}}\big(\tilde{\Lambda}-\tilde{h}^{\frac{1}{q}}\big)_+\Big)\ \ \text{uniformly on} \ \bar\Omega.
\end{equation}
\end{prop}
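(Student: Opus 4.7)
The plan is to establish Proposition \ref{prop-6.2} in two stages: first, construct sub- and supersolutions so that the standard monotone iteration for cooperative systems yields a classical solution of \eqref{C9}; and second, identify the limit \eqref{C10} by a monotone iteration scheme on an associated pointwise system, modeled on Lemma \ref{l3.2} but adapted to the $p=1$ setting.

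For existence, I would first observe that since $\sigma>\tilde\eta_{\max}$, we have $\tfrac{d_S}{d_I}\tilde\eta<1$ on $\bar\Om$ whenever $d_S+|\tfrac{d_I}{d_S}-\sigma|$ is sufficiently small; both off-diagonal derivatives in \eqref{C9} are then nonnegative and the system is cooperative. A supersolution is provided by constants $(\bar u,\bar v)=(C+\alpha,C)$ with $\alpha:=\tfrac12 \tilde h_{\min}^{1/q}$ and $C$ large enough that $\tilde\Lambda_{\max}-(C+\alpha)+(1-\tfrac{d_S}{d_I}\tilde\eta)C\le 0$; then both differential inequalities hold because $\alpha^q<\tilde h_{\min}$. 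A subsolution of the form $(\underline u,\underline v)=(\tilde S,\epsilon\varphi)$, with $\tilde S$ the DFE from \eqref{DFE_mass_bd} and $\varphi>0$ a principal eigenfunction of the linearization of the $\tilde v$-equation about $v=0$, is available because $\hat\Om\ne\emptyset$ together with the uniform convergence $\tilde S\to\tilde\Lambda$ as $d_S\to 0$ forces the corresponding principal eigenvalue to be negative for small $d_S$. The standard monotone iteration for cooperative systems then produces a classical solution $(\tilde u,\tilde v)$ lying between these two pairs.

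To prove \eqref{C10}, I would first extract uniform $L^\infty$ bounds on any solution $(\tilde u,\tilde v)$ by applying Lemma \ref{l3.1} at interior extrema of $\tilde u$ and of $\tilde u-\tilde v$, as in the derivation of \eqref{C3} and Lemma \ref{lemma_Sbound}. Next, adapting Lemma \ref{l3.2} to $p=1$, define an increasing sequence by $\underline u_0=\tilde\Lambda$, $\underline v_0=0$, $\underline v_{n+1}=(\underline u_n-\tilde h^{1/q})_+$, $\underline u_{n+1}=\tilde\Lambda+(1-\tfrac{\tilde\eta}{\sigma})\underline v_{n+1}$, together with a decreasing sequence $(\bar u_n,\bar v_n)$ starting from the constants $\bar u_0=\bar v_0=\tilde\Lambda_{\max}+(1+\sigma)(\tilde\Lambda/\tilde\eta)_{\max}$. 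A short induction shows that both sequences are monotone and, by Dini's theorem, converge uniformly on $\bar\Om$ to
\[
(u^*,v^*)=\Bigl(\min\{\tilde\Lambda,\tilde h^{1/q}\}+\tfrac{\sigma}{\tilde\eta}(\tilde\Lambda-\tilde h^{1/q})_+,\ \tfrac{\sigma}{\tilde\eta}(\tilde\Lambda-\tilde h^{1/q})_+\Bigr).
\]
I would then prove by induction on $n$ that $\liminf(\tilde u,\tilde v)\ge(\underline u_n,\underline v_n)$ and $\limsup(\tilde u,\tilde v)\le(\bar u_n,\bar v_n)$ uniformly on $\bar\Om$ as $d_S+|\tfrac{d_I}{d_S}-\sigma|\to 0$: the first equation of \eqref{C9}, treated as an elliptic problem with vanishing diffusion, forces $\tilde u\approx\tilde\Lambda+(1-\tfrac{\tilde\eta}{\sigma})\tilde v$ in the limit, while the second equation, treated as a singular perturbation, forces $(\tilde u-\tilde v)^q\to\tilde h$ on the region where $\tilde v$ stays bounded away from zero. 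Sending $n\to\infty$ yields \eqref{C10}.

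The main obstacle is the inductive singular-perturbation step for the second equation, which must distinguish the regions $\{\tilde\Lambda>\tilde h^{1/q}\}$ and $\{\tilde\Lambda\le\tilde h^{1/q}\}$, handling the free boundary $\partial\hat\Om$ where the iterates are only Lipschitz through the $(\cdot)_+$ truncation. On $\Om\setminus\hat\Om$ one must rule out persistent mass of $\tilde v$; I would achieve this via the variational characterization of the principal eigenvalue of the linearization \eqref{C1-ei}, exactly as invoked after \eqref{C1-0} in the proof of Theorem \ref{TH2.10}(i). On $\hat\Om$ one passes to the limit in the identity $(\tilde u-\tilde v)^q\tilde v=\tilde h\tilde v+o(1)$ and combines it with the limit of the first equation. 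Once this dichotomy is set up at each level $n$, the rest of the argument is routine and parallels the $p<1$ analysis of Lemma \ref{l3.2} and its application in the proof of Theorem \ref{TH2.10}(ii).
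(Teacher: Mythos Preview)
Your approach is essentially the same as the paper's. The paper also splits the proof into existence (Lemma \ref{Appendix-lem3}) via linearization at $(\tilde S,0)$ and sub/supersolutions for the cooperative parabolic flow, construction of the monotone pointwise sequences $(\underline u_n,\underline v_n)$ and $(\overline u_n,\overline v_n)$ with the recursion $\underline v_{n+1}=(\underline u_n-\tilde h^{1/q})_+$, $\underline u_{n+1}=\tilde\Lambda+(1-\tilde\eta/\sigma)\underline v_{n+1}$ (Lemma \ref{Appendix-lem5}), and then an inductive squeeze $\liminf(\tilde u,\tilde v)\ge(\underline u_n,\underline v_n)$, $\limsup(\tilde u,\tilde v)\le(\overline u_n,\overline v_n)$ via singular perturbation (Lemma \ref{Appendix-lem6}).

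The one place where you overcomplicate matters is your ``main obstacle''. The paper does \emph{not} split into the regions $\hat\Om$ and $\Om\setminus\hat\Om$, does not invoke the variational characterization \eqref{C1-ei} here, and never needs to rule out persistent mass of $\tilde v$ separately. Instead, once $\tilde u\ge\underline u_n-\varepsilon$ is known, $\tilde v$ is a supersolution of the scalar problem $d_I\Delta w+\beta[(\underline u_n-\varepsilon-w)_+^q-\tilde h]w=0$, and a direct comparison/singular-perturbation argument gives $\liminf\tilde v\ge(\underline u_n-\varepsilon-\tilde h^{1/q})_+$ uniformly on all of $\bar\Om$; the truncation $(\cdot)_+$ already encodes the dichotomy between the two regions. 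Your proposed detour through eigenvalues would work but is unnecessary.
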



Set $\xi_*:=\frac{\min\{\tilde{\Lambda}_{\min},\ \tilde{r}_{\min}^{\frac{1}{q}}\}}{2}$ and define
 \begin{equation}\label{R-rectangle-star}
 \mathfrak{R}_*:=\Big\{(u,v)\in\mathbb{R}_+^2:\ 0\le v\le u-\xi_*\Big\}.
 \end{equation}
For any solution $(\tilde{u},\tilde{v})$ of \eqref{C9}, let $S=\tilde u-\tilde v$ and $I=\frac{d_S}{d_I}\tilde v$. One can apply the similar analysis as in Lemma \ref{lemma_Sbound} to the system that $(S,I)$ satisfies to prove that $S\geq\min\{\tilde{\Lambda}_{\min},\ \tilde{r}_{\min}^{\frac{1}{q}}\}\ge 2\xi_*$. This implies that, if $(\tilde{u},\tilde{v})$ is a solution of \eqref{C9}, then $(\tilde{u},\tilde{v})\in [C^2(\Om)]^2\cap [C^1(\bar{\Om})]^2\cap C(\bar{\Om}:\mathfrak{R}_*)$.

Let $d_S>0$ and $d_I>0$ such that $\tilde{\eta}_{\max}<\frac{d_I}{d_S}$. Note that every solution of \eqref{C9} is a positive steady state  of  the following system of parabolic equations:
\begin{equation}\label{C9-1}
    \begin{cases}
\displaystyle        \tilde{u}_t=d_S\Delta \tilde{u}+\tilde{\Lambda} -\tilde{u}+\left(1-\frac{d_S}{d_I}\tilde{\eta}\right)\tilde{v},\ \ \ & x\in\Om,\ t>0,\cr
\displaystyle        \tilde{v}_t=d_S\Delta \tilde{v}+\frac{d_S\beta}{d_I}\left[(\tilde{u}-\tilde{v})_+^q-\tilde{h}\right]\tilde{v}, & x\in\Om,\ t>0,\cr
        \partial_{\nu}\tilde{u}=\partial_{\nu}\tilde{v}=0, & x\in\partial\Om,\ t>0.
    \end{cases}
\end{equation}
For any initial data $(\tilde{u}(\cdot,0),\tilde{v}(\cdot,0))\in C(\bar{\Om}:\mathfrak{R}_*)$, it follows from standard arguments of parabolic equations theory that system \eqref{C9-1} has a unique classical solution $(\tilde{u}(x,t),\tilde{v}(x,t))$.
Note  that even $q\in(0,1)$, if $(\tilde{u}(\cdot,0),\tilde{v}(\cdot,0))\in C(\bar{\Om}:\ \mathfrak{R}_*) $, the nonlinear term $(\tilde{u}-\tilde{v})_+^q$ is Lipschitz continuous near $(\tilde{u}(\cdot,0),\tilde{v}(\cdot,0))$, which  justifies the uniqueness of the classical solution.

We will prove Proposition \ref{prop-6.2} through a couple of lemmas. The following result  shows that the unique classical solution of \eqref{C9-1} stays within $C(\bar{\Om}:\mathfrak{R}_*)$ at all  time.

\begin{lem}\label{Appendix-lem1}
For any $(\tilde{u}(\cdot,0),\tilde{v}(\cdot,0))\in C(\bar{\Om}:\mathfrak{R}_*)$,  the unique classical solution $(\tilde{u}(\cdot,t),\tilde{v}(\cdot,t))$ of \eqref{C9-1} satisfies  $(\tilde{u}(\cdot,t),\tilde{v}(\cdot,t))\in C(\bar{\Om}:\mathfrak{R}_*)$ for all $t\ge 0$.

\end{lem}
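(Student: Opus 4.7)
The plan is to prove the invariance of $\mathfrak{R}_*$ by establishing the two defining inequalities $\tilde{v}\ge 0$ and $\tilde{u}-\tilde{v}\ge \xi_*$ separately via maximum principle arguments applied to the PDE system \eqref{C9-1}. First I would check the nonnegativity of $\tilde{v}$: since the equation for $\tilde{v}$ reads
$$
\tilde{v}_t-d_S\Delta\tilde{v}=\frac{d_S\beta}{d_I}\bigl[(\tilde{u}-\tilde{v})_+^q-\tilde{h}\bigr]\tilde{v},
$$
and the coefficient in brackets is bounded on any time interval on which the classical solution exists, the standard parabolic maximum principle with Neumann boundary condition applied to the linear problem with a bounded zero-order coefficient yields $\tilde{v}(\cdot,t)\ge 0$ for all $t\ge 0$, given $\tilde{v}(\cdot,0)\ge 0$.

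The main step is to show $w:=\tilde{u}-\tilde{v}\ge \xi_*$. Subtracting the two equations in \eqref{C9-1} gives
$$
w_t=d_S\Delta w+\tilde{\Lambda}-w-\frac{d_S\tilde{\eta}}{d_I}\tilde{v}-\frac{d_S\beta}{d_I}\bigl[w_+^q-\tilde{h}\bigr]\tilde{v}.
$$
Since $w(\cdot,0)\ge \xi_*$, I would argue by contradiction: suppose $t_0:=\inf\{t>0:\min_{\bar\Omega}w(\cdot,t)<\xi_*\}<\infty$. By continuity, $\min_{\bar\Omega}w(\cdot,t_0)=\xi_*$, attained at some $x_0\in\bar\Omega$. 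Applying Lemma \ref{l3.1} together with the Neumann boundary condition $\partial_\nu w=\partial_\nu\tilde{u}-\partial_\nu\tilde{v}=0$ (and Hopf's lemma to rule out a boundary minimum in the standard way), we obtain $\Delta w(x_0,t_0)\ge 0$ and $w_t(x_0,t_0)\le 0$. Plugging $w(x_0,t_0)=\xi_*>0$, using $\tilde{h}=(\tilde{\gamma}+\tilde{\eta})/\beta$, and simplifying yields
$$
w_t(x_0,t_0)\ge \tilde{\Lambda}(x_0)-\xi_*+\frac{d_S\beta(x_0)}{d_I}\tilde{v}(x_0,t_0)\bigl[\tilde{r}(x_0)-\xi_*^q\bigr].
$$
By the choice $\xi_*=\tfrac{1}{2}\min\{\tilde{\Lambda}_{\min},\tilde{r}_{\min}^{1/q}\}$, one has $\xi_*<\tilde{\Lambda}(x_0)$ and $\xi_*^q<\tilde{r}(x_0)$; combined with $\tilde{v}\ge 0$ (already established), this forces $w_t(x_0,t_0)\ge \tilde{\Lambda}(x_0)-\xi_*>0$, contradicting $w_t(x_0,t_0)\le 0$. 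Hence $w(\cdot,t)\ge \xi_*$ for all $t\ge 0$.

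The main delicate point I anticipate is making the minimum-time argument rigorous when the minimum could be attained at the boundary and when the initial inequality $w(\cdot,0)\ge \xi_*$ is not strict. To handle this cleanly, I would pass to a perturbation $w^\varepsilon:=w+\varepsilon(1+t)$ for small $\varepsilon>0$, so that $w^\varepsilon(\cdot,0)>\xi_*$ strictly; the same computation gives a strict sign at a first violation, and letting $\varepsilon\to 0$ recovers the desired inequality. The boundary case is covered by combining the Neumann condition with Hopf's lemma in the standard way, or equivalently by extending the calculation using that $\nabla w(x_0,t_0)=0$ on $\partial\Omega$ when viewed through a flat-boundary straightening. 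Once both invariants are established, the continuity of $(\tilde{u}(\cdot,t),\tilde{v}(\cdot,t))$ in $x$ is automatic from parabolic regularity, completing the proof that $(\tilde{u}(\cdot,t),\tilde{v}(\cdot,t))\in C(\bar{\Omega}:\mathfrak{R}_*)$ for all $t\ge 0$.
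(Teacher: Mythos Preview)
Your proposal is correct and follows essentially the same approach as the paper: both derive the equation for $w=\tilde{u}-\tilde{v}$, use $\tilde{v}\ge 0$, and observe that at the level $w=\xi_*$ the reaction term $\tilde{\Lambda}-\xi_*+\frac{d_S\beta}{d_I}(\tilde{r}-\xi_*^q)\tilde{v}$ is strictly positive by the definition of $\xi_*$. The only difference is presentational: the paper simplifies the $w$-equation upfront to $\tilde{z}_t=d_S\Delta\tilde{z}+(\tilde{\Lambda}-\tilde{z})+\frac{d_S}{d_I}\beta(\tilde{r}-\tilde{z}_+^q)\tilde{v}$ and then simply remarks that the constant $\underline{z}\equiv\xi_*$ is a subsolution, invoking the scalar parabolic comparison principle directly rather than spelling out a first-violation argument.
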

\begin{proof}  Let $\tilde{z}(x,t)=\tilde{u}(x,t)-\tilde{v}(x,t)$ for $x\in\bar{\Om}$ and $t\ge 0$. Then $\tilde{z}$ satisfies
\begin{equation}\label{C9-1-aa}
\begin{cases}
\displaystyle \tilde{z}_t=d_S\Delta \tilde{z}+(\tilde{\Lambda} -\tilde{z})+\frac{d_S}{d_I}\beta(\tilde{r}-\tilde{z}_+^q)\tilde{v},\ \ \ & x\in\Om,\ t>0,\cr
\partial_{\nu}\tilde{z}=0, & x\in\partial\Om,\ t>0.
\end{cases}
\end{equation}
Noticing $\tilde{v}\ge 0$,  $\underline{z}\equiv\xi_*$ is a subsolution of \eqref{C9-1-aa}. Since $\tilde{z}(\cdot,0)\ge \xi_*$, it follows from the comparison principle for parabolic equations that $\tilde{z}(\cdot,t)\ge \underline{z}= \xi_*$ for all $t\ge 0$.
 \end{proof}

By Lemma \ref{Appendix-lem1}, the solution operator of \eqref{C9-1}  is invariant in $C(\bar{\Om}:\mathfrak{R}_*)$. Since $ 1>\frac{d_S}{d_I}\tilde{\eta}_{\max} $,  the solutions of the system \eqref{C9-1} generate a monotone semiflow on $C(\bar{\Om}:\mathfrak{R}_*)$.

\begin{lem}\label{Appendix-lem2}
Every classical solution of system \eqref{C9-1} with initial data in $C(\bar{\Om}:\mathfrak{R}_*)$ is uniformly bounded.
\end{lem}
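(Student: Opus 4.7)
The plan is to bound $\tilde u$ first by a scalar comparison argument, and then deduce the bound on $\tilde v$ from the ordering $\tilde v\le \tilde u-\xi_*$ supplied by Lemma \ref{Appendix-lem1}. The hypothesis $\frac{d_I}{d_S}>\tilde\eta_{\max}$ that accompanies the system guarantees the crucial sign condition $1-\frac{d_S}{d_I}\tilde\eta(x)>0$ on $\bar\Omega$, so the coupling term in the $\tilde u$-equation is nonnegative and admits the pointwise majorization by $\tilde u$ itself.

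Concretely, from Lemma \ref{Appendix-lem1} we have $0\le\tilde v(\cdot,t)\le \tilde u(\cdot,t)$ for every $t\ge 0$, hence the first equation of \eqref{C9-1} yields
\begin{equation*}
\tilde u_t-d_S\Delta\tilde u \;=\; \tilde\Lambda -\tilde u + \Bigl(1-\tfrac{d_S}{d_I}\tilde\eta\Bigr)\tilde v \;\le\; \tilde\Lambda_{\max} - \tfrac{d_S}{d_I}\tilde\eta_{\min}\,\tilde u,
\end{equation*}
together with $\partial_\nu\tilde u=0$ on $\partial\Omega$. I would then compare $\tilde u$ with the spatially homogeneous solution of the associated ODE
\begin{equation*}
M'(t)=\tilde\Lambda_{\max}-\tfrac{d_S}{d_I}\tilde\eta_{\min}M(t),\qquad M(0)=\|\tilde u(\cdot,0)\|_{L^\infty(\Omega)},
\end{equation*}
whose trajectory is bounded by $\max\{M(0),\frac{d_I\tilde\Lambda_{\max}}{d_S\tilde\eta_{\min}}\}$ for all $t\ge 0$. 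The function $\phi:=\tilde u-M$ satisfies $\phi_t-d_S\Delta\phi+\frac{d_S}{d_I}\tilde\eta_{\min}\phi\le 0$ with $\phi(\cdot,0)\le 0$ and $\partial_\nu\phi=0$, so the parabolic maximum principle gives $\phi\le 0$, i.e., a uniform-in-time bound on $\tilde u$. The inequality $\tilde v\le\tilde u$ from Lemma \ref{Appendix-lem1} then transfers the bound to $\tilde v$ and completes the proof.

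The argument is short and essentially routine; the only point that warrants care is ensuring the correct sign so that after absorbing the coupling term the $\tilde u$-equation still inherits a strictly negative coercive coefficient. This is exactly where the hypothesis $\frac{d_I}{d_S}>\tilde\eta_{\max}$ (the same condition that makes \eqref{C9-1} cooperative and underlies Lemmas \ref{Appendix-lem1}) is invoked. No regularity subtlety arises from the power nonlinearity $(\tilde u-\tilde v)_+^q$, because it only enters the $\tilde v$-equation and the $\tilde u$-estimate is obtained without touching it; the $\tilde v$-bound is then free by monotonicity.
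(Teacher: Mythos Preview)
Your argument is correct, but it proceeds differently from the paper's. The paper exhibits, for each $m\ge 1$, an explicit constant pair
\[
(u_m,v_m)=\Big(m\tilde h_{\min}^{1/q}+m\frac{d_I}{d_S}\Big(\frac{\tilde\Lambda}{\tilde\eta}\Big)_{\max},\ \ m\frac{d_I}{d_S}\Big(\frac{\tilde\Lambda}{\tilde\eta}\Big)_{\max}\Big)
\]
and checks directly that this is a supersolution of the full cooperative system \eqref{C9-1}; choosing $m$ large enough to dominate the initial data and applying the comparison principle for monotone systems gives the bound. By contrast, you collapse the problem to a single scalar inequality for $\tilde u$ alone, exploiting the ordering $\tilde v\le\tilde u$ from Lemma~\ref{Appendix-lem1} and the sign condition $1-\frac{d_S}{d_I}\tilde\eta>0$ to absorb the coupling term, and then run a one-dimensional ODE comparison. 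Your route is a touch more elementary (no system comparison principle needed) and makes the role of the cooperativity hypothesis very transparent; the paper's route, on the other hand, fits more naturally into the sub/supersolution machinery they assemble later (the pair $(u_m,v_m)$ is reused in Lemma~\ref{Appendix-lem3} to trap a positive steady state). Both yield a bound depending on the initial data, as required.
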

\begin{proof} By elementary computations, we see that for every $m\ge 1$ the constant function
\begin{equation}\label{C9-2-ab}
(u_m,v_m)=\Big(m\tilde{h}_{\min}^{\frac{1}{q}}+m\frac{d_I}{d_S}
\Big(\frac{\tilde{\Lambda}}{\tilde{\eta}}\Big)_{\max},\ \, m\frac{d_I}{d_S}
\Big(\frac{\tilde{\Lambda}}{\tilde{\eta}}\Big)_{\max}\Big)
\end{equation}
is a supersolution of \eqref{C9-1}. Hence if $(\tilde{u}(\cdot,0),\tilde{v}(\cdot,0))\le (u_m,v_m)$  for some $m\ge 1$, then $(\tilde{u}(\cdot,t),\tilde{v}(\cdot,t))\le (u_{m},v_{m})$ for all $t>0$.
\end{proof}

In view of Lemma \ref{Appendix-lem2}, the monotone  semiflow generated by classical solutions of the system \eqref{C9-1} on $C(\bar{\Om}:\ \mathfrak{R}_*)$ is compact.

\begin{lem}\label{Appendix-lem3} Suppose that $\hat{\Om}:=\{x\in\Om:\ \tilde{\Lambda}(x)>\tilde{h}^\frac{1}{q}(x) \}\ne\emptyset$.  Then there is $d_{*}>0$ such that \eqref{C9-1} has at least one positive steady state in $C(\bar{\Om}:\mathfrak{R}_*)$  if $d_S+|\frac{d_I}{d_S}-\sigma|<d_*$.
\end{lem}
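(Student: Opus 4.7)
The plan is to build an ordered pair of sub- and super-solutions of \eqref{C9} inside $C(\bar{\Om}:\mathfrak{R}_*)$, and then extract a positive steady state between them by starting the compact monotone semiflow generated by \eqref{C9-1} (established in Lemmas \ref{Appendix-lem1} and \ref{Appendix-lem2}) from the sub-solution. The system \eqref{C9} is genuinely cooperative precisely when $\frac{d_S}{d_I}\tilde{\eta}_{\max}<1$, which holds whenever $d_S+|\frac{d_I}{d_S}-\sigma|$ is sufficiently small, since $\sigma>\tilde{\eta}_{\max}$ by assumption. For the super-solution I would take the explicit constant pair $(u_m,v_m)$ from \eqref{C9-2-ab} with $m$ large.

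The heart of the argument is the construction of a positive sub-solution. I would try $(\underline{u},\underline{v}):=(\tilde{S},\varepsilon\varphi)$, where $\tilde{S}$ is the unique positive solution of $d_S\De\tilde{S}+\tilde{\Lambda}-\tilde{S}=0$ under Neumann boundary conditions, and $\varphi>0$ is the principal eigenfunction of the linearization about the disease-free state $(\tilde{S},0)$,
\begin{equation*}
d_I\De\varphi+\beta(\tilde{S}^q-\tilde{h})\varphi+\lambda_1\varphi=0 \ \ \text{in}\ \Om, \quad \partial_\nu\varphi=0 \ \ \text{on}\ \partial\Om.
\end{equation*}
The first sub-solution inequality $d_S\De\tilde{S}+\tilde{\Lambda}-\tilde{S}+(1-\frac{d_S}{d_I}\tilde{\eta})\varepsilon\varphi\ge 0$ is immediate, and after factoring $\frac{d_S\varepsilon}{d_I}$ and using the eigenvalue identity, the second reduces to $\beta[(\tilde{S}-\varepsilon\varphi)^q-\tilde{S}^q]\ge\lambda_1$. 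This will hold for all sufficiently small $\varepsilon>0$ provided $\lambda_1<0$ is bounded away from zero. To verify the latter I would invoke a singular perturbation argument in the spirit of \cite[Lemma 3.2]{peng2008stationary} and \cite[Lemma 2.3]{allen2008asymptotic}: $\tilde{S}\to\tilde{\Lambda}$ uniformly on $\bar{\Om}$ as $d_S\to 0$, while $\lambda_1\to-\max_{\bar{\Om}}\beta(\tilde{S}^q-\tilde{h})$ as $d_I\to 0$, a limit that is strictly negative because $\hat{\Om}\ne\emptyset$.

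After further shrinking $\varepsilon$ so that $\tilde{S}-\varepsilon\varphi\ge\xi_*$ on $\bar{\Om}$ (possible since $\tilde{S}\ge\tilde{\Lambda}_{\min}\ge 2\xi_*$ by the maximum principle applied to \eqref{DFE_mass_bd}), the sub-solution lies in $C(\bar{\Om}:\mathfrak{R}_*)$ and $(\underline{u},\underline{v})\le(u_m,v_m)$ for large $m$. I then consider the orbit of \eqref{C9-1} issuing from $(\underline{u},\underline{v})$: as a strict elliptic sub-solution, its image under the monotone compact semiflow is non-decreasing in $t$, dominated above by the super-solution $(u_m,v_m)$, and consequently converges in $C(\bar{\Om})^2$ to a classical steady state $(\tilde{u}^*,\tilde{v}^*)$ with $\tilde{u}^*\ge\tilde{S}>0$ and $\tilde{v}^*\ge\varepsilon\varphi>0$; elliptic regularity promotes this to a smooth solution of \eqref{C9}. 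The main obstacle I anticipate is making the principal eigenvalue estimate $\lambda_1<0$ genuinely uniform as $(d_S,d_I)\to(0,0)$ along $d_I/d_S\to\sigma$; this requires careful singular perturbation control of $\lambda_1(d_S,d_I)$ coupled with the uniform convergence $\tilde{S}\to\tilde{\Lambda}$.
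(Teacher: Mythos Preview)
Your proposal is correct and follows essentially the same strategy as the paper: both build an ordered sub/super-solution pair inside $C(\bar{\Om}:\mathfrak{R}_*)$, use the same constant super-solution $(u_m,v_m)$, establish linear instability of $(\tilde{S},0)$ via the principal eigenvalue of the scalar linearization (whose sign is controlled by $\hat{\Om}\ne\emptyset$ together with $\tilde{S}\to\tilde{\Lambda}$), and then invoke the compact monotone semiflow of \eqref{C9-1}. The only minor difference is that the paper takes the sub-solution $(\tilde{S}+\varepsilon\tilde{\varphi}_1,\varepsilon\tilde{\varphi}_2)$ using the full eigenfunction pair of the linearized $2\times 2$ system \eqref{C9-2} (requiring a resolvent step to produce $\tilde{\varphi}_1$), whereas your simpler choice $(\tilde{S},\varepsilon\varphi)$ bypasses that computation.
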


\begin{proof} Let $\tilde{S}$ be the unique (positive) solution of
\begin{equation}\label{au-1}
    \begin{cases}
        d_S\Delta u-u+\tilde{\Lambda}=0,\ \ \ & x\in\Om,\cr
        \partial_{\nu}u=0, &x\in\partial\Om.
    \end{cases}
\end{equation}
Note that $(\tilde{S},0)$ is a steady state solution of \eqref{C9-1} and belongs to $C(\bar{\Om}:\mathfrak{R}_*)$. Linearizing \eqref{C9-1} at $({\tilde{S}},0)$, we obtain the eigenvalue problem
\begin{equation}\label{C9-2}
\begin{cases}
\displaystyle d_S\Delta \varphi_1-\varphi_1+\big(1-\frac{d_S}{d_I}\eta\big)\varphi_2=\lambda\varphi_1,\ \ \ & x\in\Om,\cr
\displaystyle d_S\Delta\varphi_2+\frac{d_S\beta}{d_I}\big({\tilde{S}}^{q}-\tilde{h}\big)\varphi_2=\lambda\varphi_2, & x\in\Om,\cr
\partial_{\nu}\varphi_1=\partial_{\nu}\varphi_2=0, & x\in\partial\Om.
\end{cases}
\end{equation}
Clearly, the maximal eigenvalue of \eqref{C9-2} is given by
     $$
     \tilde{\Gamma}_{d_{S},d_{I}}:=\max\{-1,\ \,\lambda_{1,d_S,d_I}\},
     $$
where $\lambda_{1,d_S,d_I}$ is the principal eigenvalue of the following eigenvalue problem:
\begin{equation}\nonumber
\begin{cases}
\displaystyle d_S\Delta\varphi+\frac{d_S\beta}{d_I}\big({\tilde{S}}^{q}-\tilde{h}\big)\varphi=\lambda\varphi, & x\in\Om,\cr
\partial_{\nu}\varphi=0, & x\in\partial\Om.
\end{cases}
\end{equation}
Since $\tilde{S}\to \tilde{\Lambda}$ uniformly on $\bar\Omega$ as $d_S\to0$ and $\hat{\Om}\ne\emptyset$, it follows from \cite[Lemma 3.1]{LN2006}  and the continuity of the principal eigenvalue with respect to parameters that
\begin{equation}
\lim_{d_S+|\frac{d_I}{d_S}-\sigma|\to 0}\tilde{\Gamma}_{d_{S},d_{I}}=\max\left\{-1,\ \,\max_{x\in\bar{\Om}}\frac{\beta(x)}{{\sigma}}\big(\tilde{\Lambda}^q(x)-\tilde{h}(x)\big)\right\}>0.
\end{equation}
As a consequence, there is $d_*>0$ such that $\tilde{\Gamma}_{d_{S},d_{I}}>0$ and $({\tilde{S}},0)$ is linearly unstable if $d_S+|\frac{d_I}{d_S}-\sigma|<d_{*}$.

Fix $d_S>0$ and $d_I>0$ such that $d_S+|\frac{d_I}{d_S}-\sigma|<d_*$. Let $\tilde{\varphi}_2$ be the positive eigenfunction associated with $\lambda_{1,d_S,d_I}$ satisfying $\max_{x\in\bar{\Omega}}\tilde{\varphi}_2(x)=1$. Since $\tilde{\Gamma}_{d_S,d_I}=\lambda_{1,d_S,d_I}>0$, then $\lambda_{1,d_S,d_I}+1$ belongs to the resolvent set of $d_S\Delta$ subject to  homogeneous Neumann boundary condition. Hence, there is a unique $\tilde{\varphi}_1$ solving the first equation of \eqref{C9-2}  with $\lambda=\lambda_{1,d_I,d_I}$ and $\varphi_2=\tilde{\varphi}_2$.
Since $\tilde{\varphi}_2>0$, we obtain from the strong maximum principle for elliptic equations that $\tilde{\varphi}_1>0$ on $\bar{\Omega}$.
Define  $(\underline{u}_{\varepsilon},\underline{v}_{\varepsilon}):=(\tilde{S}+\varepsilon\tilde{\varphi}_1,\varepsilon\tilde{\varphi}_2)$ for $\varepsilon>0$. Since $\tilde{S}_{\min}\ge \tilde{\Lambda}_{\min}$,  we can fix $0<\varepsilon\ll 1$ such that $ (\underline{u}_{\varepsilon},\underline{v}_{\varepsilon})\in C(\bar{\Omega}: \mathcal{R}_*)$.
Moreover, we can choose $0<\varepsilon\ll 1$ such that
\begin{equation*}
      \begin{cases}  d_S\Delta \displaystyle\underline{u}_{\varepsilon}+\tilde{\Lambda}-\underline{u}_{\varepsilon}+(1-\frac{d_S}{d_I}\tilde{\eta})\underline{v}_{\varepsilon}=\varepsilon\lambda_{1,d_S,d_I}\tilde{\varphi}_1>0, & x\in\Omega,\cr
      \partial_{\nu}\underline{u}_{\varepsilon}=0, & x\in\partial\Omega
      \end{cases}
\end{equation*}
and
\begin{align*}
\begin{cases}
   \displaystyle d_S\Delta\underline{v}_{\varepsilon}+\frac{d_S}{d_I}\beta\big[(\underline{u}_{\varepsilon}-\underline{v}_{\varepsilon})^q-\tilde{h}\big]\underline{v}_{\varepsilon}= \\
   \quad\quad\quad\quad\left(\lambda_{1,d_S,d_I}+\frac{d_S}{d_I}\beta \left[(\tilde S+\varepsilon (\tilde\varphi_1-\tilde\varphi_2))^q-\tilde S^q)\right]\right)\underline{v}_{\varepsilon}>0, & x\in\Omega, \cr
      \partial_{\nu}\underline{v}_{\varepsilon}=0, & x\in\partial\Omega.
    \end{cases}
\end{align*}
Therefore, $(\underline{u}_{\varepsilon},\underline{v}_{\varepsilon})$ is a subsolution of \eqref{C9-1}.     Also recall that $(u_m,v_m)$ given by \eqref{C9-2-ab} is a supersolution of \eqref{C9-1} for every $m\ge 1$. Thus, choosing $m\gg 1$ such that $(\underline{u}_{\varepsilon},\underline{v}_{\varepsilon})\le  (u_m,v_m)$,  the standard arguments from the monotone dynamical systems and the precompactness of every orbit in $C(\bar{\Om}:\mathfrak{R}_*)$ ensure that \eqref{C9-1} has at least one positive steady state in  $[(\underline{u}_{\varepsilon},\underline{v}_{\varepsilon}),(u_m,v_m)]\subset C(\bar{\Om}:\mathfrak{R}_*)$.
  \end{proof}

  \begin{lem} \label{Appendix-lem5}
  \begin{itemize}
      \item[\rm (i)] Let $\underline{u}_0\equiv \tilde{\Lambda}$ and $\underline{v}_0\equiv0$. Define the sequence $\{(\underline{u}_n,\underline{v}_n)\}$  inductively by $\underline{u}_n=\tilde{\Lambda}+(1-\frac{\tilde{\eta}}{\sigma})\underline{v}_{n}$ and $\underline{v}_{n+1}=(\underline{u}_n-\tilde{h}^{\frac{1}{q}})_+$ for every $n\ge 0$. Then the sequence $\{(\underline{u}_n,\underline{v}_n)\}$ is increasing and
      $$
      \lim_{n\to\infty}(\underline{u}_n,\underline{v}_n)
      =\Big(\min\{\tilde{\Lambda},\tilde{h}^{\frac{1}{q}}\}
      +\frac{\sigma}{\tilde{\eta}}(\tilde{\Lambda}-\tilde{h}^{\frac{1}{q}})_+,\ \frac{\sigma}{\tilde{\eta}}(\tilde{\Lambda}-\tilde{h}^{\frac{1}{q}})_+\Big)\quad  \text{uniformly on}\ \bar\Omega.
      $$
      \item[\rm (ii)] Let $\overline{v}_0=\overline{u}_0=\tilde{\Lambda}_{\max}+(\sigma+1)
          \big(\frac{\tilde{\Lambda}}{\tilde{\eta}}\big)_{\max}$. Define the sequence $\{(\overline{u}_n,\overline{v}_n)\}$ inductively by $\overline{u}_{n+1}=\Lambda+(1-\frac{\tilde\eta}{\sigma})\overline{v}_n$ and $\overline{v}_{n+1}=(\overline{u}_{n}-{\tilde h}^{\frac{1}{q}})_+$ for every $n\ge 0$. Then $\{(\overline{u}_n,\overline{v}_n)\}$ is decreasing and
      $$
      \lim_{n\to \infty}(\overline{u}_n,\overline{v}_n)=\Big(\min\{\tilde{\Lambda},\tilde{h}^{\frac{1}{q}}\}
      +\frac{\sigma}{\tilde{\eta}}(\tilde{\Lambda}-\tilde{h}^{\frac{1}{q}})_+,\
      \frac{\sigma}{\tilde{\eta}}(\tilde{\Lambda}-\tilde{h}^{\frac{1}{q}})_+\Big)\quad  \text{uniformly on}\ \bar\Omega.
      $$
  \end{itemize}

  \end{lem}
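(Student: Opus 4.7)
The plan is to mirror the strategy successfully used in Lemma \ref{l3.2}: first establish pointwise monotonicity and boundedness of each sequence by induction, then identify its pointwise limit as the (unique) solution of a pointwise algebraic fixed-point system, and finally upgrade pointwise convergence to uniform convergence on $\bar\Omega$ via Dini's theorem. The crucial structural fact that makes the whole scheme work is that the hypothesis $\sigma>\tilde\eta_{\max}$ yields $1-\tilde\eta/\sigma>0$ on $\bar\Omega$, so that the map $v\mapsto \tilde\Lambda+(1-\tilde\eta/\sigma)v$ is monotone increasing; combined with the obvious monotonicity of $u\mapsto (u-\tilde h^{1/q})_+$, this makes the iteration maps of both (i) and (ii) order-preserving.

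For (i), I would argue by induction on $n$. The base step $\underline{v}_1=(\tilde\Lambda-\tilde h^{1/q})_+\geq 0=\underline{v}_0$ and $\underline{u}_1=\tilde\Lambda+(1-\tilde\eta/\sigma)\underline{v}_1\geq\tilde\Lambda=\underline{u}_0$ is immediate. The inductive step $\underline{v}_n\le\underline{v}_{n+1}\Rightarrow\underline{u}_n\le\underline{u}_{n+1}\Rightarrow\underline{v}_{n+1}\le\underline{v}_{n+2}$ uses only the positivity of $1-\tilde\eta/\sigma$ and monotonicity of $(\cdot)_+$. Boundedness from above by the constant $\bar u_0=\bar v_0$ of part (ii) can be checked by a parallel induction (the map sends any $(u,v)\le(\bar u_0,\bar v_0)$ to something $\le(\bar u_0,\bar v_0)$ provided $\bar u_0\ge\sigma(\tilde\Lambda/\tilde\eta)_{\max}$, which holds by the chosen size of $\bar u_0$). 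Thus $\{(\underline{u}_n,\underline{v}_n)\}$ converges pointwise to some $(u^*,v^*)$ satisfying the fixed-point system $u^*=\tilde\Lambda+(1-\tilde\eta/\sigma)v^*$, $v^*=(u^*-\tilde h^{1/q})_+$. Solving this pointwise in the two cases $\tilde\Lambda(x)\le\tilde h^{1/q}(x)$ and $\tilde\Lambda(x)>\tilde h^{1/q}(x)$ yields, after a short substitution, exactly $v^*=\tfrac{\sigma}{\tilde\eta}(\tilde\Lambda-\tilde h^{1/q})_+$ and $u^*=\min\{\tilde\Lambda,\tilde h^{1/q}\}+v^*$, matching the claimed limit.

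For (ii), the argument is symmetric. The base step $\bar u_1\le\bar u_0$ reduces to $\tilde\Lambda(x)\le(\tilde\eta(x)/\sigma)\bar u_0$, which holds because $\bar u_0\ge(\sigma+1)(\tilde\Lambda/\tilde\eta)_{\max}>\sigma(\tilde\Lambda/\tilde\eta)_{\max}$; the inequality $\bar v_1\le\bar v_0$ is immediate from $\bar v_1=(\bar u_0-\tilde h^{1/q})_+\le\bar u_0=\bar v_0$. The inductive step again uses only positivity of $1-\tilde\eta/\sigma$ and monotonicity of $(\cdot)_+$. Boundedness below by $0$ gives pointwise convergence, and the limit again solves the same fixed-point system as in (i), hence is identical to $(u^*,v^*)$.

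Finally, to obtain uniform convergence, I observe that $\tilde\Lambda$ and $\tilde h^{1/q}$ are continuous on $\bar\Omega$, so by induction each $\underline{u}_n$, $\underline{v}_n$, $\bar u_n$, $\bar v_n$ is continuous, and the common limit $(u^*,v^*)$ is also continuous (being built from $\min$, $(\cdot)_+$, and continuous data). Since the convergence is monotone and $\bar\Omega$ is compact, Dini's theorem delivers uniform convergence in both (i) and (ii). I do not expect any serious obstacles; the only place requiring mild care is verifying that the specific choice of $\bar u_0=\bar v_0$ in (ii) is large enough to initiate the decreasing induction, which I have sketched above.
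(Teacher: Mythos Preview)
Your proposal is correct and follows essentially the same route as the paper: induction for monotonicity, pointwise identification of the limit as the unique solution of the algebraic fixed-point system, and Dini's theorem for uniform convergence. The only minor difference is that for the boundedness in part (i) the paper uses the geometric-series estimate $\underline{v}_{n+1}\le(1-\tilde\eta/\sigma)\underline{v}_n+(\tilde\Lambda-\tilde h^{1/q})_+$, whereas you cap the sequence by the constant $\bar u_0=\bar v_0$ from part (ii); both arguments work and are equally short.
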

\begin{proof}{\rm (i)} We proceed by induction to show the monotonicity of  $\{(\underline{u}_n,\underline{v}_n)\}$. It is clear that $\underline{u}_1=\tilde{\Lambda}\ge\underline{u}_0$ and $\underline{v}_1=(\tilde{\Lambda}-\tilde{h}^{\frac{1}{q}})_+\ge \underline{v}_0$. So, we  suppose that the monotonicity holds up to some $n$ and  check that it also holds for $n+1$. Indeed, by the induction hypothesis,
$$
\underline{v}_{n+1}=(\underline{u}_{n}-\tilde{h}^{\frac{1}{q}})_+\ge (\underline{u}_{n-1}-\tilde{h}^{\frac{1}{q}})_+=\underline{v}_n,
$$
and in turn
$$
\underline{u}_{n+1}=\tilde{\Lambda}+(1-\frac{\tilde{\eta}}{\sigma})\underline{v}_{n+1}\ge \tilde{\Lambda}+(1-\frac{\tilde{\eta}}{\sigma})\underline{v}_{n}=\underline{u}_n.
$$
This verifies the monotonicity of the sequence $\{(\underline{u}_n,\underline{v}_n)\}$.

Next, using
\begin{equation}\label{C9-4}
\underline{v}_{n+1}=\left[\tilde{\Lambda}+\left(1-\frac{\tilde{\eta}}{\sigma}\right)
\underline{v}_n-\tilde{h}^{\frac{1}{q}}\right]_+,\quad \forall\;n\ge 0,
\end{equation}
we deduce that
$$
\underline{v}_{n+1}\le \left(1-\frac{\tilde{\eta}}{\sigma}\right)\underline{v}_{n}+(\tilde{\Lambda}-\tilde{h}^{\frac{1}{q}})_+, \quad \forall \;n\ge 0.
$$
Therefore,
\begin{equation}\label{C9-5}
\underline{v}_{n+1}\le (\tilde{\Lambda}-{\tilde h}^{\frac{1}{q}})_+\sum_{i=0}^{n}\Big(1-\frac{\tilde{\eta}}{\sigma}\Big)^i,\quad n\ge 1.
\end{equation}
Thanks to $\underline{v}_1=(\tilde{\Lambda}-\tilde{h}^{\frac{1}{q}})_+$, we obtain from the monotonicity of $\{\underline{v}_n\}_{n\ge 0}$ and the inequality \eqref{C9-5} that $\underline{v}_{n}(x)>0$   if and only if $x\in\hat{\Om}$ for all $n\ge 1$. Taking $n\to\infty$ in \eqref{C9-4} yields $\underline{v}_n\to \frac{\sigma}{\tilde{\eta}}(\tilde{\Lambda}-\tilde{h}^{\frac{1}{q}})_+$ uniformly on $\bar\Omega$. Since $\underline{u}_n=\tilde{\Lambda}+(1-\frac{\tilde{\eta}}{\sigma})\underline{v}_n$ for all $n\ge 0$, we deduce that
$$\underline{u}_n\to \tilde{\Lambda}+\big(\frac{\sigma}{\tilde{\eta}}-1\big)(\tilde{\Lambda}-\tilde{h}^{\frac{1}{q}})_+
=\min\{\tilde{\Lambda},\tilde{h}^{\frac{1}{q}}\}
+\frac{\sigma}{\tilde{\eta}}(\tilde{\Lambda}-\tilde{h}^{\frac{1}{q}})_+
$$
uniformly on $\bar\Om$  as $n\to\infty$.

\quad {\rm (ii)} We proceed by induction to show that $\{(\overline{u}_n,\overline{v}_n)\}$ is decreasing. Here it  suffices to show that $\overline{u}_1\le \overline{u}_0$ and $\overline{v}_1\le\overline{v}_0$. Clearly, it holds that
$$
\overline{u}_0-\overline{u}_1=\overline{u}_0-\tilde{\Lambda}-\big(1-\frac{\tilde{\eta}}{\sigma}\big)\overline{u}_0
=\frac{\tilde{\eta}}{\sigma}\overline{u}_0-\Lambda>0\quad \text{and}\quad \overline{v}_1=(\overline{v}_0-\tilde{h}^{\frac{1}{q}})_+\le \overline{v}_0.
$$
Since $\{(\overline{u}_n,\overline{v}_n)\}$ is decreasing and bounded below,  it converges to some $(\overline{u}_{\infty},\overline{v}_{\infty})$ pointwise on $\bar\Om$. Observe from the expressions of $\overline{u}_{n+1}$ and $\overline{v}_{n+1}$ that $(\overline{u}_{\infty},\overline{v}_{\infty})$ satisfies
$$
\overline{u}_{\infty}=\tilde{\Lambda}+\big(1-\frac{\tilde{\eta}}{\sigma}\big)\overline{v}_{\infty},\ \ \
\overline{v}_{\infty}=(\overline{u}_\infty-\tilde{h}^{\frac{1}{q}})_+.
$$
Solving this system yields
$$
(\overline{u}_{\infty},\overline{v}_{\infty})
=\left(\min\{\tilde{\Lambda},\ \tilde{h}^{\frac{1}{q}}\}+\frac{\sigma}{\tilde{\eta}}(\tilde{\Lambda}-\tilde{h}^{\frac{1}{q}})_+,\ \frac{\sigma}{\tilde{\eta}}(\tilde{\Lambda}-\tilde{h}^{\frac{1}{q}})_+\right).
$$
Since $(\overline{u}_{\infty},\overline{v}_{\infty})$ is continuous on $\bar\Omega$, we can invoke the Dini's Theorem to conclude that the convergence of $\{(\overline{u}_n,\overline{v}_n)\}$  is uniform on $\bar{\Om}$.
\end{proof}

Now, we can  obtain the asymptotic profiles of the solutions of \eqref{C9} as $d_I+|\frac{d_I}{d_S}-\sigma|\to0$:

\begin{lem}\label{Appendix-lem6} Let $d_{*}>0$ be given by Lemma \ref{Appendix-lem3}. Then every solution $(\tilde{u},\tilde{v})\in C(\bar{\Om}:\mathfrak{R}_*)$ of \eqref{C9} for $d_S+|\frac{d_I}{d_S}-\sigma|<d_{*}$ satisfies \eqref{C10}.

\end{lem}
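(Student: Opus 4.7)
\medskip

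\noindent\textbf{Proposed approach.} The plan is a sandwich argument that confines every solution $(\tilde u,\tilde v)\in C(\bar\Omega;\mathfrak R_*)$ of \eqref{C9} between the monotone iterates $(\underline u_n,\underline v_n)$ and $(\overline u_n,\overline v_n)$ constructed in Lemma \ref{Appendix-lem5}, and then lets $n\to\infty$. Concretely, I will prove by induction on $n$ that
\[
\liminf_{d_S+|\tfrac{d_I}{d_S}-\sigma|\to 0}(\tilde u,\tilde v)\ \ge\ (\underline u_n,\underline v_n)\quad\text{and}\quad
\limsup_{d_S+|\tfrac{d_I}{d_S}-\sigma|\to 0}(\tilde u,\tilde v)\ \le\ (\overline u_n,\overline v_n)
\]
uniformly on $\bar\Omega$. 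Since Lemma \ref{Appendix-lem5} tells us that both sequences converge uniformly to the common limit $\bigl(\min\{\tilde\Lambda,\tilde h^{1/q}\}+\tfrac\sigma{\tilde\eta}(\tilde\Lambda-\tilde h^{1/q})_+,\tfrac\sigma{\tilde\eta}(\tilde\Lambda-\tilde h^{1/q})_+\bigr)$, \eqref{C10} follows at once.

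\medskip

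\noindent\textbf{Base cases.} For the lower bound, $\tilde v\ge 0=\underline v_0$ is trivial, and the first equation in \eqref{C9} together with $(1-\tfrac{d_S}{d_I}\tilde\eta)\tilde v\ge 0$ (valid since $\sigma>\tilde\eta_{\max}$ and the parameters are close to the limit) gives $d_S\Delta\tilde u-\tilde u+\tilde\Lambda\le 0$, hence $\tilde u\ge\tilde S$ by the elliptic comparison principle, where $\tilde S$ solves \eqref{au-1}. Because $\tilde S\to\tilde\Lambda=\underline u_0$ uniformly as $d_S\to 0$ (standard singular perturbation for a regular elliptic problem), we obtain the $n=0$ case. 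For the upper bound I will use the constant supersolution $(u_m,v_m)$ from \eqref{C9-2-ab} with $m=1$, which is uniformly bounded along the parameter sequence and dominates $(\overline u_0,\overline v_0)$ after adjusting $m$; by cooperative comparison, $(\tilde u,\tilde v)\le(u_m,v_m)$ and a direct estimate recovers $\limsup(\tilde u,\tilde v)\le(\overline u_0,\overline v_0)$.

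\medskip

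\noindent\textbf{Inductive step.} Suppose the lower bound holds at stage $n$. For any $\varepsilon>0$ small, Egoroff-style uniformity gives $\tilde v\ge \underline v_n-\varepsilon$ uniformly on $\bar\Omega$ once $d_S+|\tfrac{d_I}{d_S}-\sigma|$ is small. Plugging into the first equation of \eqref{C9} and again comparing with the associated linear problem on $\bar\Omega$, I infer $\tilde u\ge \tilde\Lambda+(1-\tfrac{\tilde\eta}{\sigma})(\underline v_n-\varepsilon)-\varepsilon$, i.e. $\liminf\tilde u\ge\underline u_n$. For the $\tilde v$ update I rewrite the second equation of \eqref{C9} as $d_I\Delta\tilde v+\beta[(\tilde u-\tilde v)^q-\tilde h]\tilde v=0$ and run a singular-perturbation comparison: on the open set $\{\underline u_n>\tilde h^{1/q}\}$ I construct, for any $\varepsilon>0$, a smooth strict subsolution $\underline w_\varepsilon$ that agrees up to $O(\varepsilon)$ with $\bigl(\underline u_n-\tilde h^{1/q}-\varepsilon\bigr)_+$ (by mollifying and multiplying by a cutoff vanishing on the free boundary $\{\underline u_n=\tilde h^{1/q}\}$) and satisfies $d_I\Delta\underline w_\varepsilon+\beta[(\tilde u-\underline w_\varepsilon)^q-\tilde h]\underline w_\varepsilon>0$ once $d_I$ is sufficiently small, because the reaction term is strictly positive there while $d_I\Delta \underline w_\varepsilon$ is $O(d_I)$. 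The scalar comparison principle applied at an interior minimum of $\tilde v-\underline w_\varepsilon$ (Lemma \ref{l3.1}) then yields $\tilde v\ge\underline w_\varepsilon$, and letting $\varepsilon\to 0$ gives $\liminf\tilde v\ge\underline v_{n+1}$. The upper-bound induction is symmetric, using a strict supersolution built from $\bigl(\overline u_n-\tilde h^{1/q}\bigr)_+ +\varepsilon$ together with the cooperative structure of \eqref{C9} (valid whenever $\tfrac{d_S}{d_I}\tilde\eta_{\max}<1$).

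\medskip

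\noindent\textbf{Main obstacle.} The delicate part is the $\tilde v$-update. The iterate $\underline v_{n+1}=(\underline u_n-\tilde h^{1/q})_+$ is only Lipschitz and has a free boundary, while the reaction term $\beta[(\tilde u-\tilde v)^q-\tilde h]\tilde v$ is only H\"older (for $0<q<1$) in $\tilde v$ near $\{\tilde u=\tilde h^{1/q}\}$. Handling this requires (i) working inside the open region $\{\underline u_n>\tilde h^{1/q}\}$ where everything is smooth and positive, (ii) cutting off near the free boundary with a modulus $\varepsilon$ and exploiting the strict positivity $(\tilde u-\tilde v)^q-\tilde h\ge c(\varepsilon)>0$ to beat the $o(1)$ contribution of $d_I\Delta$, and (iii) passing to $\varepsilon\to 0$ after the induction. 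Once this singular-limit comparison is in place, combining the $\liminf$ and $\limsup$ bounds with Lemma \ref{Appendix-lem5} and Dini's theorem (applied to the continuous limit profile) yields the claimed uniform convergence \eqref{C10}.
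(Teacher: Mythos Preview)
Your overall strategy---sandwich $(\tilde u,\tilde v)$ between the iterates $(\underline u_n,\underline v_n)$ and $(\overline u_n,\overline v_n)$ of Lemma \ref{Appendix-lem5} by induction, then let $n\to\infty$---is exactly the paper's. Two steps in your sketch, however, have genuine gaps.

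\emph{Upper-bound base case.} You claim ``by cooperative comparison, $(\tilde u,\tilde v)\le(u_m,v_m)$''. For an elliptic cooperative system a constant supersolution does not automatically dominate an arbitrary positive steady state; you would need an a priori ordering to start the comparison, which is precisely what you are trying to prove. The paper instead applies the maximum-point argument of Lemma \ref{l3.1} directly to the first equation of \eqref{C9} (the analogue of \eqref{C3}), obtaining $\tilde u\le\tilde\Lambda_{\max}+\tfrac{d_I}{d_S}(\tilde\Lambda/\tilde\eta)_{\max}$ unconditionally; combined with $0<\tilde v<\tilde u$ this gives $(\tilde u,\tilde v)\le(\overline u_0,\overline v_0)$ once $d_I/d_S$ is near $\sigma$.

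\emph{The $\tilde v$-update.} Your proposed route---build a smooth subsolution $\underline w_\varepsilon\approx(\underline u_n-\tilde h^{1/q}-\varepsilon)_+$ and invoke Lemma \ref{l3.1} at a minimum of $\tilde v-\underline w_\varepsilon$---does not close. The reaction $w\mapsto\beta[(\tilde u-w)^q-\tilde h]w$ is \emph{not} monotone in $w$: at a putative negative minimum you get $[(\tilde u-\tilde v)^q-\tilde h]>[(\tilde u-\underline w_\varepsilon)^q-\tilde h]>0$ while $\tilde v<\underline w_\varepsilon$, and the product comparison is inconclusive, so no contradiction follows. The paper avoids this entirely. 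From $\tilde u\ge\underline u_n-\varepsilon$ one sees that $\tilde v$ is a \emph{positive supersolution} of the scalar problem
\[
d_I\Delta w+\beta\big[(\underline u_n-\varepsilon-w)_+^q-\tilde h\big]w=0,\qquad \partial_\nu w=0,
\]
whose nonlinearity $g$ satisfies $g(x,w)/w$ strictly decreasing in $w>0$. This sublinearity forces uniqueness of the positive solution $w^*$ and yields $\tilde v\ge w^*$ by the standard sub--supersolution iteration; singular perturbation then gives $w^*\to(\underline u_n-\varepsilon-\tilde h^{1/q})_+$ uniformly as $d_I\to 0$. The ``main obstacle'' you flag (Lipschitz free boundary, H\"older reaction) therefore dissolves: no mollification, cutoff, or pointwise comparison near $\{\underline u_n=\tilde h^{1/q}\}$ is needed, because the comparison is carried out at the level of the scalar logistic equation with fixed coefficient $\underline u_n-\varepsilon$, not via a direct maximum-principle argument against $\underline w_\varepsilon$. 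The upper-bound $\tilde v$-step is symmetric (now $\tilde v$ is a subsolution of the scalar equation with $\overline u_n+\varepsilon$ in place of $\tilde u$).
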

\begin{proof} Let $(\tilde{u},\tilde{v})\in C(\bar{\Om}:\mathfrak{R}_*)$ be a solution of \eqref{C9} for $d_S+|\frac{d_I}{d_S}-\sigma|<d_{*}$, which exists by Lemma \ref{Appendix-lem3}.  Let $\{(\underline{u}_n,\underline{v}_n)\}$ be the sequence in Lemma \ref{Appendix-lem5}-{\rm(i)}. We claim that
\begin{equation}\label{C9-8}
    \liminf_{d_S+|\frac{d_I}{d_S}-\sigma|\to0}(\tilde{u},\tilde{v})\ge (\underline{u}_n,\underline{v}_n)\ \ \quad \text{uniformly on}\ \bar\Om, \ \forall\, n\ge 0.
\end{equation}

We proceed by induction to establish \eqref{C9-8}. Since $\tilde{v}\ge 0$, it follows from the comparison principle for elliptic equations that $\tilde{u}\ge {\tilde{S}}$, where ${\tilde{S}}$ is given by \eqref{au-1}. Since ${\tilde{S}}\to\tilde{\Lambda}=\underline{u}_0$ uniformly on $\bar\Omega$ as $d_S\to0$, we have
$$
\liminf_{d_{S}+|\frac{d_I}{d_S}-\sigma|\to0}\tilde{u}(x)\ge\underline{u}_0\quad\ \  \text{uniformly on}\ \bar\Om.
$$
It clear that
$$
\liminf_{d_{S}+|\frac{d_I}{d_S}-\sigma|\to0}\tilde{v}(x)\ge0=\underline{v}_0\quad\ \  \text{uniformly on}\ \bar\Om.
$$
Hence \eqref{C9-8} holds for $n=0$.

Suppose that \eqref{C9-8} holds up to some $n\ge 0$. Then we show that it also holds for $n+1$. Let $0<\varepsilon\ll 1$ be given. By the induction hypothesis, there is $d_{\varepsilon,1}>0$ such that $\tilde{u}\ge \underline{u}_n-\varepsilon $ on $\bar{\Om}$ if $d_S+|\frac{d_I}{d_S}-\sigma|<d_{\varepsilon,1}$. Thus,
$$
\begin{cases}
\displaystyle d_S\Delta \tilde{v}+\frac{d_S}{d_I}\beta\big[(\underline{u}_n-\varepsilon-\tilde{v})_+^q-\tilde{h}\big]\tilde{v}\leq0,\ \ \ & x\in\Om,\cr
\partial_{\nu}\tilde{v}=0, & x\in\partial\Om,
\end{cases}
$$
for all $d_S+|\frac{d_I}{d_S}-\sigma|<d_{\varepsilon,1}$. It then follows from the singular perturbation theory and the comparison principle  for elliptic equations that
$$
\liminf_{d_S+|\frac{d_I}{d_S}-\sigma|\to 0}\tilde{v}\ge (\underline{u}_n-\varepsilon-\tilde{h}^{\frac{1}{q}})_+\ \ \quad \text{uniformly on}\ \bar\Om.
$$
Since $\varepsilon$ is arbitrarily chosen, we have that
$$
\liminf_{d_S+|\frac{d_I}{d_S}-\sigma|\to 0}\tilde{v}\ge (\underline{u}_n-h^{\frac{1}{q}})_+=\underline{v}_{n+1}\ \ \quad \text{uniformly on}\ \bar\Om.
$$
This shows that there is $d_{\varepsilon,2}>0$ such that
$\tilde{v}\ge (\underline{v}_{n+1}-\varepsilon)_+$ on $\bar\Om$ for all $d_S+|\frac{d_I}{d_S}-\sigma|<d_{\varepsilon,2}$. As a result,
$$
\begin{cases}
\displaystyle d_S\Delta \tilde{u}-\tilde{u}+\tilde{\Lambda}+\big(1-\frac{d_I\tilde{\eta}}{d_S}\big)(\underline{v}_{n+1}-\varepsilon)_+\leq0,\ \ \ & x\in\Om\cr
\partial_{\nu}\tilde{u}=0, & x\in\partial\Om,
\end{cases}
$$
for all $d_S+|\frac{d_I}{d_S}-\sigma|<d_{\varepsilon,2}$. Again, we can employ the singular perturbation theory and the comparison principle for elliptic equation to obtain that
$$
\liminf_{d_S+|\frac{d_I}{d_S}-\sigma|\to0}\tilde{u}\ge \tilde{\Lambda}+\big(1-\frac{\tilde{\eta}}{\sigma}\big)(\underline{v}_{n+1}-\varepsilon)_+\quad\ \ \text{uniformly on}\ \bar\Om.
$$
This obviously implies that
$$
\liminf_{d_S+|\frac{d_I}{d_S}-\sigma|\to0}\tilde{u}\ge \tilde{\Lambda}+\big(1-\frac{\tilde{\eta}}{\sigma}\big)\underline{v}_{n+1}=\underline{u}_{n+1}\ \ \quad \text{uniformly on}\ \bar\Om.
$$
Therefore, \eqref{C9-8} holds for $n+1$. So by induction,  \eqref{C9-8} holds for all $n\ge 0$.

Let $\{(\overline{u}_n,\overline{v}_n)\}_{n\ge 0}$ be the sequence in Lemma \ref{Appendix-lem5}-{\rm(ii)}. We claim that, for each $n\ge 0$,
\begin{equation}\label{C9-9}
    \limsup_{d_S+|\frac{d_I}{d_S}-\sigma|\to0}(\tilde{u},\tilde{v})\le (\overline{u}_n,\overline{v}_n)\ \ \quad \text{uniformly on}\ \bar\Om.
\end{equation}

The proof of \eqref{C9-9} follows from an induction argument similar as above. Note that the argument leading to \eqref{C3} implies that both $\tilde{u}$ and $\tilde{v}$ also satisfy \eqref{C3} with $(\Lambda,h,\eta) $ replaced by $(\tilde{\Lambda},\tilde{h},\tilde{\eta})$.  It  then follows that $(\tilde{u},\tilde{v})\le (\overline{u}_0,\overline{v}_0)$ on $\bar\Om$. Hence, \eqref{C9-9} holds for $n=0$.

Suppose that \eqref{C9-9} holds up to some $n\ge 0$. Let $\varepsilon>0$ be fixed. By the induction hypothesis, there is $d_{\varepsilon}>0$ such that
$$
\tilde{u}\le \overline{u}_n+\varepsilon \quad\text{and}\quad \tilde{v}\le \overline{v}_n+\varepsilon, \quad \forall\; d_S+|\frac{d_I}{d_S}-\sigma|<d_{\varepsilon}.
$$
As a consequence, we have
\begin{equation}\label{C9-10}
    \begin{cases}
\displaystyle        d_S\Delta \tilde{u}+\tilde\Lambda-\tilde{u}+\big(1-\frac{d_I\tilde{\eta}}{d_S}\big)(\overline{v}_n+\varepsilon)\geq0,\ \ \ & x\in\Om,\cr
        \partial_{\nu}\tilde{u}=0, & x\in\Om
    \end{cases}
\end{equation}
and
\begin{equation}\label{C9-11}
\begin{cases}
\displaystyle d_S\Delta\tilde{v}+\frac{d_S}{d_I}\beta\Big[(\overline{u}_{n}+\varepsilon-\tilde{v})^q-\tilde{h}\Big]\tilde{v}\geq0,\ \ \ &x\in\Om,\cr
\partial_{\nu}\tilde{v}=0, & x\in\partial\Om,
\end{cases}
\end{equation}
if $d_S+|\frac{d_I}{d_S}-\sigma|<d_{\varepsilon}$. By the singular perturbation theory and the comparison principle for elliptic equations, we deduce from \eqref{C9-10} and \eqref{C9-11} that
$$
\limsup_{d_S+|\frac{d_I}{d_S}-\sigma|\to0}\tilde{u}\le \tilde{\Lambda}+\big(1-\frac{\tilde{\eta}}{\sigma}\big)(\overline{v}_n+\varepsilon)
=\overline{u}_{n+1}+\big(1-\frac{\tilde{\eta}}{\sigma}\big)\varepsilon
$$
and
$$
\limsup_{d_S+|\frac{d_I}{d_S}-\sigma|\to0}\tilde{v}\le (\overline{u}_n+\varepsilon-\tilde{h}^{\frac{1}{q}})_+\le \overline{v}_{n+1}+\varepsilon
$$
uniformly on $\bar\Om$. Since $\varepsilon>0$ was arbitrary,  \eqref{C9-9} holds for $n+1$. So, \eqref{C9-9} holds for all $n\ge 0$.

Taking $n\to\infty$ in \eqref{C9-8} and \eqref{C9-9}, we can see from Lemma \ref{Appendix-lem5} that $(\tilde{u},\tilde{v})$ satisfies \eqref{C10}.
\end{proof}

\vskip10pt
{ 
\noindent\textbf{Acknowledgment}. The authors would like to thank the referees for value suggestions that lead to improvement of the manuscript. }

\vskip25pt
\begin{center}
DATA AVAILABILITY STATEMENT
\end{center}
Data sharing is not applicable to this article as no new data were created or analysed in this study.

\vskip25pt

\bibliographystyle{plain}
%


\end{document}